\documentclass[11pt]{article}
\usepackage{a4wide}

\usepackage[a4paper, margin=2.3cm]{geometry}
\usepackage{amsmath,amssymb,amsthm, comment, enumitem, array}
\usepackage{color}
\usepackage{parskip}
\usepackage{hyperref}
\usepackage{parskip}
\usepackage{setspace}
\usepackage{graphicx}
\usepackage{mathtools}
\usepackage[thinc]{esdiff}
\usepackage[euler]{textgreek}
\usepackage{subfig}
\usepackage{xcolor}

\newtheorem{theorem}{Theorem}[section]

\newtheorem{remark}{Remark}[section]
\newtheorem{example}[theorem]{Example}
\newtheorem{corollary}[theorem]{Corollary}
\newtheorem{lemma}[theorem]{Lemma}
\newtheorem{question}[theorem]{Question}

\newtheorem{conjecture}[theorem]{Conjecture}
\newtheorem*{definition*}{Definition}

\hypersetup{
    colorlinks=true,
    linkcolor = blue,
    citecolor=black,
    urlcolor=cyan,
}

\usepackage[capitalise, nameinlink]{cleveref}
\crefname{equation}{}{}
\crefname{figure}{{\sc Figure}}{{\sc Figure}}
\crefname{subsection}{Subsection}{Subsections}




\def\bC{\mathbb{C}}
\def\cN{\mathcal{N}}
\def\bZ{\mathbb{Z}}
\def\bF{\mathbb{F}}

\def\cP{\mathcal{P}}
\def\cS{\mathcal{S}}
\def\cM{\mathcal{M}}
\def\cE{\mathcal{E}}

\def\fN{\mathfrak{N}}

\def\bfo{\mathbf{0}}
\def\bfm{\mathbf{m}}
\def\bfl{\mathbf{l}}
\def\bfw{\mathbf{w}}
\def\bfx{\mathbf{x}}
\def\bfy{\mathbf{y}}
\def\bfz{\mathbf{z}}
\def\bfu{\mathbf{u}}
\def\bfv{\mathbf{v}}
\def\mod{\textsf{mod }}
\def\ord{\textsf{ord}}
\def\bfb{\mathbf{b}}
\def\bfc{\mathbf{c}}

\def\i{\mathsf{i}}


\begin{document}
\title{On the distance problem over finite p-adic rings}
\author{Thang Pham \thanks{Institute of Mathematics and Interdisciplinary Sciences, Xidian University. \newline
\hspace*{0.45cm} Email: {\tt thangphammath@xidian.edu.cn}} \and Boqing Xue \thanks{Institute of Mathematical Sciences, ShanghaiTech University. ~Email: {\tt xuebq@shanghaitech.edu.cn}}}
\date{}
\maketitle
\begin{abstract}
In this paper, we study the distance problem in the setting of finite p-adic rings. In odd dimensions, our results are essentially sharp. In even dimensions, we clarify the conjecture and provide examples to support it. Surprisingly, compared to the finite field case, in this setting, we are able to provide a large family of sets such that the distance conjecture holds. By developing new restriction type estimates associated to circles and orbits, with a group theoretic argument, we will prove the $4/3$-parallel result in the two dimensions. This answers a question raised by Alex Iosevich. In a more general scenario, the existence/distribution of geometric/graph configurations will be also considered in this paper. The main results present improvements and extensions of the recent work due to Ben Lichtin (2019, 2023). In comparison with Lichtin's method, our approach is much simpler and flexible, which is also one of the novelties in this paper.


\end{abstract}
\tableofcontents
\section{Introduction}
Let $p$ be a prime, $r$ be a positive integer, and $\mathbb{Z}/p^r\mathbb{Z}$ be a finite $p$-adic ring. For a postive integer $n$, the {\color{black}distance function} between two points $\bfx=(x_1, \ldots, x_n)$ and $\bfy=(y_1, \ldots, y_n)$ in $(\mathbb{Z}/p^r\mathbb{Z})^n$, {\color{black}denoted by $||\bfx-\bfy||:=F(\bfx,\bfy)$, is given by 
\[
F(\bfx,\bfy)=(x_1-y_1)^2+\cdots+(x_n-y_n)^2\quad (\mod p^r).\]}
Given subsets $E_1,E_2\subset (\mathbb{Z}/p^r\mathbb{Z})^n$, the distance set determined by points in $E_1\times E_2$ is denoted by $\Delta_{n,r}(E_1,E_2)$, i.e.
\[\color{black}
\Delta_{n,r}(E_1,E_2) = \{F(\bfx,\bfy):\, \bfx\in E_1,\,\bfy\in E_2\}.
\]
For simplicity, we write $\Delta_{n,r}(E)=\Delta_{n,r}(E,E)$. We also denote the density by $\delta_{E_1,E_2}:=\frac{\sqrt{|E_1||E_2|}}{p^{rn}}$ and $\delta_E:=\frac{|E|}{p^{rn}}$.

In the setting of finite $p$-adic rings, the Erd\H{o}s--Falconer distance problem is stated as follows.

\begin{question}\label{mainquestion}
What is the smallest density threshold $\delta\in (0, 1)$ independent of $r$ such that $|\Delta_{n,r}(E)|\gg p^r$ whenever $E$ is a subset of $(\mathbb{Z}/p^r\mathbb{Z})^n$ with $\delta_E\ge \delta$?
\end{question}
This problem was initially studied {\color{black}with the distance function} in the case $r=1$, i.e. over finite fields, due to Iosevich and Rudnev \cite{IR}. More precisely, they proved that if $|E|\geq C p^{\frac{n+1}{2}}$ for some sufficient large constant $C$, then the distance set $\Delta_{n,1}(E)$ covers the whole field. Hart, Iosevich, Koh, and Rudnev \cite{hart} indicated that the exponent $\frac{n+1}{2}$ is sharp in odd dimensions. In even dimensions, it is conjectured that the right exponent should be $n/2$. In two dimensions, Chapman, Erdogan, Hart, Iosevich, and Koh \cite{chapman} proved the exponent $4/3$ by using an extension theorem associated to circles in the plane. This result was recently improved to $5/4$ by Murphy, Petridis, the first listed author, Rudnev, and Stevens in \cite{Murphy} by using algebraic methods and results from incidence geometry.

When $r>1$, by extending the techniques from finite fields, Covert, Iosevich and Pakianathan \cite{covert} proved that $U_r\subset \Delta_{n,r}(E)$ whenever {\color{black}$\delta_E\gg r(r+1)p^{-\frac{n-1}{2}}$}, where $U_r:= (\mathbb{Z}/p^r\mathbb{Z})^\ast$ is the set of units. This result is only non-trivial when $r$ is bounded, and does not offer a uniform density independent of $r$, to Question \ref{mainquestion}. By using a different and sophisticated approach, namely, a combination of $p$-adic analysis and estimates for a class of exponential sums mod $p$, Lichtin \cite{BL} proved that $U_r\subset \Delta_{n,r}(E)$ if $\delta_E\gg p^{-\frac{n-1}{2}}$. As mentioned in his paper, the main advantage of his approach is that the argument detects nontrivial cancellations within certain exponential sums mod $p^r$ which were not used in the work of Covert et al. \cite{covert}.

In this paper, the first purpose is to study the distance problem in a general setting when the distance function is replaced by {\color{black} a more general polynomial}. In the case of the usual distance function, one of the novelties of this paper is to present a simpler and flexible approach than that of Lichtin. Our results are essentially sharp in odd dimensions. The second purpose of this paper is to clarify the conjecture in even dimensions and provide examples to support it. In two dimensions, by developing new restriction type estimates associated to circles and orbits, with a group theoretic argument, we will prove the $4/3$-parallel result. Surprisingly, in comparison with the finite field case, in this setting, we are able to provide a large family of sets such that the distance conjecture holds. The third purpose is to study the existence/distribution of geometric/graph configurations. To be precise, we are interested in the following configuration-type questions.

\begin{question}Let $H$ be a given graph, $E$ be any subset of $(\mathbb{Z}/p^r\mathbb{Z})^n$ and $j$ be any element of $U_r$. What is the smallest density threshold $\delta\in (0, 1)$ independent of $r$ such that $E$ contains a copy of $H$ at distance $j$ whenever $\delta_E\gg \delta$?
\end{question}



\begin{question}
Let $\mathcal{C}$ be a given geometric configuration and $E$ be any set in $(\mathbb{Z}/p^r\mathbb{Z})^n$. What is the smallest density threshold $\delta\in (0, 1)$ not depending on $r$ such that $E$ contains a copy of $\mathcal{C}$ whenever $\delta_E\gg \delta$?
\end{question}
This paper addresses the graphs of being cycles, chains, and trees, and a geometric configuration of rectangles. In contrast to initial methods/results over finite fields, the main challenge one has to deal with in this setting is to find an effective approach to obtain a uniform density which does not depend on $r$. Lichtin's method in \cite{BL} is based on a combination of $p$-adic analysis and estimates for a class of exponential sums mod $p$. To find cancellations in exponential sums, he investigated paths of points (descending family of neighbourhoods), and his arguments are heavily relied on computing Hessian matrix of all levels. The approach we introduce in this paper is simpler, which involves polynomial congruences and Fourier analysis in finite rings. Our arguments only rely on the {\color{black}first order Jacobian matrix}, and avoid the {\color{black}second order Hessian matrix}.

\subsection{Results on the generalized distance sets}

For a polynomial $F(\bfx)\in \bZ[\bfx]$ in $n$ variables, we denote $F_i(\bfx)=\frac{\partial F}{\partial x_i}(\bfx)$ for all $1\leq i\leq n$, and denote $(\nabla F)(\bfx)=(F_1(\bfx),F_2(\bfx),\ldots,F_n(\bfx))$. Our first result reads as follows.

\begin{theorem} \label{main}
Let $F(\bfx)$ be a given polynomial in $\bZ[\bfx]$ in $n\geq 2$ variables. Suppose that the following conditions hold for some positive constants $c_1,c_2,c_3$ with $c_1<1$, for some prime $p$, and for some $j\in \bZ$ with $(j,p)=1$:

(\romannumeral1) $(\nabla F)(\bfx)\not\equiv \bfo \,(\mod p)$ when $F(\bfx)\equiv j\, (\mod p)$;

(\romannumeral2) $\big|\#\{\bfx\,(\mod p):\, F(\bfx)\equiv j\,(\mod p)\}-p^{n-1}\big| \leq c_1 p^{n-1}$;

(\romannumeral3) When $\bfm \not\equiv \bfo\, (\mod p)$,
\[
\Big|\sum\limits_{\bfx\,(\mod p)\atop F(\bfx)\equiv j \,(\mod p)} \,\,e_p\left(-\bfm \cdot \bfx\right)\Big|\leq c_2 p^{\frac{n-1}{2}};
\]

(\romannumeral4) When $\bfm \not\equiv \bfo\, (\mod p)$,
\begin{align*}
\#\big\lbrace\bfx(\mod p):\, &F(\bfx)\equiv j\, (\mod p),\, \exists~ 1\leq t\leq n,\,\, s.t. ~F_t(\bfx)\not\equiv 0\,(\mod p), \\
&m_t\not\equiv 0\,(\mod p), m_i F_t(\bfx)\equiv m_t F_i(\bfx)\, (\mod p)\, (1\leq i\leq n)\big\rbrace\leq c_3 p^{\frac{n-1}{2}}.
\end{align*}
Let $r$ be any positive integer and $E_1$, $E_2$ be subsets of $(\bZ/p^r\bZ)^n$. Assume that
\[
\delta_{E_1,E_2}\,>\, {\color{black}C p^{-\frac{n-1}{2}}},
\]where $C=(1-c_1)^{-1}\max\{c_2,c_3\}$. Then
\[
\#\{(\bfx,\bfy)\in E_1\times E_2:\, F(\bfx-\bfy)\equiv j\,(\mod p^r)\}>0.
\]
\end{theorem}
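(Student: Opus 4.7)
The plan is to apply Fourier analysis on the additive group $(\bZ/p^r\bZ)^n$. Let $S_j=\{\bfz\in(\bZ/p^r\bZ)^n:\,F(\bfz)\equiv j\pmod{p^r}\}$ and, for any $A\subset(\bZ/p^r\bZ)^n$, write $\widehat{\mathbf{1}_A}(\bfm):=\sum_{\bfx\in A}e_{p^r}(-\bfm\cdot\bfx)$. Expanding the indicator of $F(\bfx-\bfy)\equiv j\pmod{p^r}$ by Fourier inversion and separating the $\bfm=\bfo$ contribution gives
\[
N:=\#\{(\bfx,\bfy)\in E_1\times E_2:\,F(\bfx-\bfy)\equiv j\pmod{p^r}\} = \frac{|S_j|\,|E_1|\,|E_2|}{p^{rn}} + \frac{1}{p^{rn}}\sum_{\bfm\neq\bfo}\widehat{\mathbf{1}_{S_j}}(\bfm)\,\overline{\widehat{\mathbf{1}_{E_1}}(\bfm)}\,\widehat{\mathbf{1}_{E_2}}(\bfm).
\]
Condition (ii) gives $|\tilde S_j|\geq(1-c_1)p^{n-1}$ on the mod-$p$ reduction, and condition (i) lets Hensel's lemma lift each $\bfz_0\in\tilde S_j$ to exactly $p^{(r-1)(n-1)}$ points of $S_j$, so $|S_j|\geq(1-c_1)p^{r(n-1)}$ and the main term is at least $(1-c_1)|E_1||E_2|/p^r$. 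Cauchy--Schwarz combined with Plancherel bounds the error by $\max_{\bfm\neq\bfo}|\widehat{\mathbf{1}_{S_j}}(\bfm)|\cdot\sqrt{|E_1||E_2|}$.

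The heart of the argument is the uniform estimate $|\widehat{\mathbf{1}_{S_j}}(\bfm)|\leq\max(c_2,c_3)\,p^{(r-1/2)(n-1)}$ valid for every nonzero $\bfm$. Writing $\bfm=p^s\bfn$ with $(\bfn,p)=1$ and decomposing $\bfz=\bfz'+p^{r-1}\bfv$ with $\bfz'\in(\bZ/p^{r-1}\bZ)^n$, $\bfv\in(\bZ/p\bZ)^n$, the first-order Taylor expansion $F(\bfz)\equiv F(\bfz')+p^{r-1}\bfv\cdot(\nabla F)(\bfz')\pmod{p^r}$ (valid for $r\geq 2$ because $2(r-1)\geq r$) together with the smoothness from (i) produces the descent identity
\[
\widehat{\mathbf{1}_{S_j^{(r)}}}(p\bfm')=p^{n-1}\,\widehat{\mathbf{1}_{S_j^{(r-1)}}}(\bfm').
\]
Iterating this peels off powers of $p$ and reduces the task to the case when $\bfm$ has a unit component at some level $r'=r-s\geq 1$. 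For $r'=1$ the bound is immediate from condition (iii).

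For $r'\geq 2$ and $(\bfm,p)=1$, I would re-apply the top-digit decomposition $\bfz=\bfz'+p^{r'-1}\bfv$; after the Taylor expansion, the inner sum over $\bfv$ becomes a character sum over the affine hyperplane $\{\bfv:\,\bfv\cdot(\nabla F)(\bfz')\equiv -k(\bfz')\pmod p\}$, where $k(\bfz')=(F(\bfz')-j)/p^{r'-1}\pmod p$. Orthogonality makes this inner sum vanish unless $\bfm$ is proportional to $(\nabla F)(\bfz')$ modulo $p$, in which case it contributes magnitude $p^{n-1}$. The proportionality set is exactly what condition (iv) bounds, and since that condition depends only on the mod-$p$ reduction of $\bfz'$, Hensel lifting from $\tilde S_j$ up to $S_j^{(r'-1)}$ gives at most $c_3p^{(n-1)/2}\cdot p^{(r'-2)(n-1)}$ contributing $\bfz'$, yielding the target bound $c_3 p^{(r'-1/2)(n-1)}$. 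Substituting into the descent identity and comparing main and error terms, the condition that the main dominates the error simplifies exactly to $\delta_{E_1,E_2}>(1-c_1)^{-1}\max(c_2,c_3)\,p^{-(n-1)/2}$, which is the hypothesis of the theorem.

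The principal obstacle I anticipate is the last step: one must recognize that the proportionality-of-$\bfm$-with-$\nabla F$ set produced by the affine-hyperplane character sum is exactly the object quantified in condition (iv). Without that identification only the trivial count $|S_j^{(r'-1)}|\approx p^{(r'-1)(n-1)}$ is available, which is too weak by a factor of $p^{(n-1)/2}$. A pleasant feature of the scheme is that only first-order (Jacobian) information about $F$ is ever used, consistent with the authors' stated advantage over Lichtin's Hessian-based approach.
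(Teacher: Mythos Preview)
Your proposal is correct and follows essentially the same approach as the paper. The paper packages the Fourier decay estimate as a separate lemma (Lemma~\ref{lemma_fourier}) and performs the reduction from level $r$ to level $r-s$ in a single substitution $\bfx=\bfy+p^{r-\nu}\bfz$ rather than via your iterated one-step descent identity, and it uses a normalized Fourier transform, but the structure---Hensel lifting for the main term, descent to the level where $\bfm$ is a unit, then the top-digit expansion producing an affine-hyperplane character sum that isolates the proportionality set of condition~(iv)---is identical to yours.
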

\begin{remark}
    In applications, the constants $c_1,c_2,c_3$ depend only on the polynomial $F$ (degree, coefficients, number of variables), and the prime $p$ needs to be sufficiently large compared to the degree and the coefficients of $F$. So the result is uniform in all natural numbers $r\geq 1$ and all integers $j$ with $(j,p)=1$. Here and throughout, by depending on $F$ we mean in terms of the degree, coefficients, and number of variables.
\end{remark}
\begin{corollary} \label{cor_diag_hom}
Let $F(\bfx)=\sum\nolimits_{i=1}^n a_ix_i^k$ be a polynomial in $\bZ[\bfx]$ with $n\geq 2$, $k\geq 2$, and $a_i\neq 0$ for all $1\leq i\leq n$. Then, for any sufficiently large prime $p$ and the density threshold at least $Cp^{-\frac{n-1}{2}}$, the conclusion of Theorem \ref{main} holds for all integers $r,j$ with $r\geq 1$ and $(j,p)=1$, and with the constant $C$ depending only on $F$.
\end{corollary}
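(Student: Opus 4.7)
The plan is to verify the four hypotheses (i)--(iv) of Theorem~\ref{main} for the diagonal polynomial $F(\bfx)=\sum_{i=1}^n a_ix_i^k$, with constants depending only on $k$, $n$, and the coefficients $a_i$, once $p$ is large enough that $p\nmid k\prod_{i=1}^n a_i$. Given such a verification, the corollary follows immediately from Theorem~\ref{main}.

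Conditions (i) and (iv) will follow by direct computation from the diagonal structure. For (i), since $F_i(\bfx)=ka_ix_i^{k-1}$, the congruence $(\nabla F)(\bfx)\equiv\bfo\pmod p$ forces $x_i\equiv 0\pmod p$ for every $i$, hence $F(\bfx)\equiv 0\pmod p$, contradicting $(j,p)=1$. For (iv), the relations $m_iF_t(\bfx)\equiv m_tF_i(\bfx)\pmod p$ rewrite as $(x_i/x_t)^{k-1}\equiv m_ia_t/(m_ta_i)\pmod p$ for every $i\ne t$; the hypothesis $F_t(\bfx)\not\equiv 0$ forces $x_t\not\equiv 0$, and each congruence leaves at most $k-1$ values of $x_i/x_t$, giving at most $(k-1)^{n-1}$ choices of the ratios $\eta_i=x_i/x_t$. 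Setting $x_i=\eta_ix_t$ and imposing $F(\bfx)\equiv j\pmod p$ reduces to $\bigl(a_t+\sum_{i\ne t}a_i\eta_i^k\bigr)x_t^k\equiv j\pmod p$, which has at most $k$ solutions in $x_t$ (and none when the bracket vanishes, since $j\not\equiv 0$). Summing over $t\in\{1,\dots,n\}$ bounds the count in (iv) by $nk(k-1)^{n-1}$, which is at most $c_3 p^{(n-1)/2}$ for $p$ large.

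The substance of the proof lies in (ii) and (iii). Since the affine hypersurface $V_j=\{F=j\}$ is smooth of dimension $n-1$ by (i) and of degree $k$, the Weil--Deligne bounds for exponential sums on smooth affine varieties yield
\[
\Bigl|\sum_{\bfx\in V_j(\Fp)} e_p(-\bfm\cdot\bfx)-p^{n-1}\mathbf{1}[\bfm\equiv\bfo]\Bigr|\le C(k,n)\,p^{(n-1)/2}
\]
for every $\bfm\in(\Fp)^n$. The case $\bfm\equiv\bfo$ gives (ii) with $c_1=C(k,n)\,p^{-(n-1)/2}<1$ for $p$ large, and the case $\bfm\not\equiv\bfo$ gives (iii) with $c_2=C(k,n)$. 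For a self-contained treatment of this diagonal family, one may alternatively evaluate the left-hand side explicitly in terms of Gauss and Jacobi sums.

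The main obstacle lies in establishing (iii). If one naively writes the sum as $\tfrac{1}{p}\sum_{s\ne 0}e_p(-sj)\prod_{i=1}^n K(sa_i,m_i)$ with $K(a,b)=\sum_{x}e_p(ax^k-bx)$ and applies the pointwise Weil bound $|K(a,b)|\le(k-1)\sqrt{p}$ to each of the $n$ factors, one obtains only $O(p^{n/2})$, a factor $\sqrt{p}$ too weak. Recovering the sharper $p^{(n-1)/2}$ bound requires additional cancellation in the outer sum over $s$, furnished either by Deligne's theorem on smooth affine hypersurfaces or, equivalently for this family, by an explicit analysis of the resulting hyper-Kloosterman and Jacobi sums. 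Once (iii) is secured, Theorem~\ref{main} yields the density threshold $Cp^{-(n-1)/2}$ asserted in the corollary.
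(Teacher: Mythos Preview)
Your proposal is correct and follows the same overall strategy as the paper: verify hypotheses (i)--(iv) of Theorem~\ref{main} for the diagonal form, with (ii) and (iii) supplied by the Weil--Deligne/Koh--Shen bounds (the paper cites these as Lemmas~\ref{lem_solutions} and~\ref{lem_Fourier_diag_hom}).

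The one genuine difference is your treatment of condition (iv), which is cleaner than the paper's. The paper argues by cases: for $k=2$ and $k=3$ it obtains an $O_F(1)$ bound by explicit elimination; for $k\ge 4$ it drops the constraint $F(\bfx)\equiv j$, bounds the number of $\bfx$ only by the relations $x_i^{k-1}\equiv c_i x_t^{k-1}$, and obtains $O_F(p)$, which suffices only when $n\ge 3$; the case $n=2$, $k\ge 4$ then requires a separate, more delicate elimination. Your argument instead fixes the ratios $\eta_i=x_i/x_t$ first (at most $(k-1)^{n-1}$ choices) and only then imposes $F(\bfx)\equiv j$, which becomes $\bigl(a_t+\sum_{i\ne t}a_i\eta_i^k\bigr)x_t^k\equiv j$ and contributes at most $k$ values of $x_t$. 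This yields the uniform bound $nk(k-1)^{n-1}=O_F(1)$ in all cases at once, avoiding the case split and the weaker $O(p)$ bound. Both approaches are valid; yours is shorter and gives a tighter constant in (iv), though this has no effect on the final density threshold since (iii) already forces $c_2\asymp 1$.
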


It also worth noting that when $k=3$ and $r=1$, Corollary \ref{cor_diag_hom} recovers a result established by Iosevich and Koh in \cite{IK}. Moreover, Example \ref{ex1} in the last section will show that this distance result is sharp in odd dimensions.

We mention that condition (\romannumeral3) in Theorem \ref{main} is equivalent to
\[
\Big|\sum\limits_{\bfx\,(\mod p)}\sum\limits_{s\not\equiv 0\, (\mod p)}  \,\,e_p\left(-\bfm \cdot \bfx+s F(\bfx)-sj\right)\Big|\ll p^{\frac{n+1}{2}}
\]
for $\bfm \not\equiv \bfo\, (\mod p)$. However, the above bound is not applicable to polynomials of the form $F(\bfx)=\sum\nolimits_{j=1}^n a_jx_j^{k_j}$ with distinct exponents. In \cite{KS}, Koh and Shen worked with a weaker exponential sum
\[
\Big|\sum\limits_{\bfx\,(\mod p)} \,\,e_p\left(-\bfm \cdot \bfx+s F(\bfx)\right)\Big|\ll p^{\frac{n}{2}}
\]
for $\bfm \not\equiv \bfo\, (\mod p)$ and $s\not\equiv 0\,(\mod p)$. And they proved that $|\Delta_{n,1}(E_1,E_2)|\gg p$ whenever $E_1,E_2$ are subsets of $\bF_p^n$ satisfying
\begin{equation} \label{eq_main2_r=1}
\delta_{E_1,E_2} \gg p^{-(n-1)/2}.
\end{equation}
We observe that the approach in \cite{KS} will not lead to 
result which is uniform in $r$. With the method developed in this paper, we are able to provide a generalization of the above result in the setting of finite $p$-adic rings.

\begin{theorem} \label{main2}
Let $F(\bfx)=\sum\nolimits_{i=1}^n a_ix_i^{k_i}$ be a polynomial in $\bZ[\bfx]$ with $n\geq 2$, $k_i\geq 2$ and $a_i\neq 0$ for all $1\leq i\leq n$. Denote $k_\ast = \min\nolimits_{1\leq i\leq n}k_i$. Then, for any sufficiently large prime $p$, for any natural number $r\geq 1$, and any subsets $E_1,E_2\subseteq \bZ/p^r\bZ$, we have
\[
|\Delta_{n,r} (E_1 ,E_2 )| \gg \min\left\{p^r,\,\frac{|E_1||E_2|}{p^{r(2n-2/k_\ast)-n+1}}\right\}.
\]
In particular, one has $|\Delta_{n,r} (E_1 ,E_2)|\gg p^r$ if
\begin{equation} \label{eq_main2_rgeq2}
\delta_{E_1,E_2} \gg p^{r\big(\frac{1}{2}-\frac{1}{k_\ast}\big)-\frac{n-1}{2}}.
\end{equation}
Here, the implied constants depend only on $F$.
\end{theorem}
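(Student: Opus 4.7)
The plan is the standard $L^2$ method. By Cauchy--Schwarz,
\[
|\Delta_{n,r}(E_1,E_2)| \ge \frac{(|E_1||E_2|)^2}{\sum_{j} \nu(j)^2}, \qquad \nu(j):=\#\{(\bfx,\bfy)\in E_1\times E_2 : F(\bfx-\bfy)\equiv j \pmod{p^r}\},
\]
so it suffices to upper bound $\sum_j \nu(j)^2$. Character orthogonality on $\bZ/p^r\bZ$ yields $\sum_j \nu(j)^2 = p^{-r}\sum_{s}|G(s)|^2$ where $G(s):=\sum_{\bfx\in E_1,\bfy\in E_2} e_{p^r}(sF(\bfx-\bfy))$. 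The $s=0$ term contributes $|E_1|^2|E_2|^2/p^r$, producing the first branch $|\Delta|\gg p^r$ whenever it dominates; the remaining task is to show $\sum_{s\neq 0} |G(s)|^2 \ll |E_1||E_2|\, p^{r(2n-2/k_\ast+1)-n+1}$, which feeds the second branch.

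For each $s\neq 0$, write $s=p^ts'$ with $(s',p)=1$ and $0\le t\le r-1$. Because $e_{p^r}(sX)=e_{p^{r-t}}(s'X)$, the sum $G(p^ts')$ depends on $\bfx-\bfy$ only modulo $p^{r-t}$ and descends to a weighted exponential sum on $(\bZ/p^{r-t}\bZ)^n$ against the coarsened density $H_t(\bar\bfv):=\#\{(\bfx,\bfy)\in E_1\times E_2:\bfx-\bfy\equiv\bar\bfv \pmod{p^{r-t}}\}$. Fourier inversion on $(\bZ/p^{r-t}\bZ)^n$ rewrites $G(p^ts')$ as an inner product against the kernel
\[
K_{s'}^{(r-t)}(\bar\bfm)=\prod_{i=1}^n \sum_{v\in\bZ/p^{r-t}\bZ} e_{p^{r-t}}(s'a_iv^{k_i}-\bar m_iv),
\]
which factorizes over coordinates due to the diagonal form of $F$. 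For $p$ large compared to $\max k_i$ and $(s',p)=1$, the classical Hua bound on one-dimensional complete exponential sums gives $|K_{s',i}^{(r-t)}(\bar m_i)|\ll p^{(r-t)(1-1/k_i)}$ uniformly in $\bar m_i$, so $|K_{s'}^{(r-t)}(\bar\bfm)|\ll p^{(r-t)(n-1/k_\ast)}$ after relaxing each $1/k_i$ to the extremal $1/k_\ast$.

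Plugging this pointwise kernel estimate into Cauchy--Schwarz, together with Parseval for the Fourier coefficients of the reductions $\bar E_i^t$ of $E_i$ modulo $p^{r-t}$ and the crude moment bound $\|\bar E_i^t\|_2^2 \le p^{tn}|E_i|$, yields $|G(p^ts')|^2 \ll p^{2(r-t)(n-1/k_\ast)+2tn}|E_1||E_2|$. Summing over the $\phi(p^{r-t})\le p^{r-t}$ units $s'$ at each level and then over $t\in\{0,\dots,r-1\}$, the resulting sum in $t$ is a geometric progression whose largest term occurs at the boundary $t=r-1$; this dominant term produces the required off-diagonal bound, and substitution into the Cauchy--Schwarz inequality completes the proof.

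The main technical obstacle is keeping the estimate uniform in $r$: a naive summation over the levels $t$ could introduce a spurious factor of $r$. This is circumvented by arranging the geometric series so that only the extremal term survives, and by the $r$-uniform character of the one-dimensional Hua bound, which depends on $F$ only through the degrees $k_i$ and the leading coefficients $a_i$.
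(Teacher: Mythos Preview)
Your overall architecture is the same as the paper's---Cauchy--Schwarz reduces to $\sum_j\nu(j)^2$, orthogonality in $j$ introduces the auxiliary variable $s$, and the diagonal kernel factorises over coordinates---but the numerical bookkeeping does not close, and the gap is exactly a factor of $p^{n-1}$.

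Concretely: with your kernel bound $|K^{(r-t)}_{s'}(\bar\bfm)|\ll p^{(r-t)(n-1/k_\ast)}$ and the $L^2$ moment $\|\bar E_i^t\|_2^2\le p^{tn}|E_i|$, the contribution at level $t$ (after summing over the $\le p^{r-t}$ units $s'$) has exponent
\[
(r-t)\bigl(2n+1-2/k_\ast\bigr)+2tn .
\]
The increment in $t$ is $-1+2/k_\ast\le 0$ since $k_\ast\ge 2$, so the series is \emph{non-increasing}; its largest term is at $t=0$, not $t=r-1$. The $t=0$ term already gives $\sum_{s\ne 0}|G(s)|^2\ll |E_1||E_2|\,p^{\,r(2n+1-2/k_\ast)}$, whereas the target is $|E_1||E_2|\,p^{\,r(2n+1-2/k_\ast)-(n-1)}$. (For $k_\ast=2$ every level has the same exponent, so even the $t=r-1$ term is $p^{n-1}$ too large.) Hua's bound used pointwise and then summed trivially over $s'$ cannot recover this loss.

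The paper gains precisely this $p^{-(n-1)}$ by treating the $2n$ one-dimensional factors asymmetrically. For $2n-2$ of them it uses the $r$-uniform stationary-phase/Hensel estimate
\[
\Bigl|\sum_{x\bmod q}e_q\bigl(sax^{k}-mx\bigr)\Bigr|\ll q\,p^{-1/2}\qquad(s\not\equiv 0\bmod q),
\]
which saves $p^{1/2}$ per factor independently of $r$; this yields the missing $p^{-(n-1)}$. The remaining two factors are not bounded pointwise at all: instead one applies Cauchy--Schwarz in $s$ and the second-moment identity $\sum_{s}\bigl|\sum_x e_q(sax^{k}-mx)\bigr|^2\ll q^{3-2/k_\ast}$, which is what produces the exponent $-2/k_\ast$. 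Replacing your pointwise Hua step by this combination---sup bound on $2n-2$ coordinates, $L^2$-in-$s$ on the two coordinates of minimal degree---repairs the argument.
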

If $k_\ast =2$, then we can see that the lower bound of the density is independent of $r$. However, if $k_\ast \geq 3$, then the formula \eqref{eq_main2_rgeq2} is meaningful only when $r\leq \frac{k_\ast(n-1)}{k_\ast-2}$.

\subsection{The conjecture in even dimensions}
In even dimensions, one might ask about the conjectured densities to guarantee that $U_r\subset \Delta_{n, r}(E)$ or $|\Delta_{n, r}(E)|\gg p^r$. In this paper, we propose the following.
\begin{conjecture}\label{con-even}
    Let $F(\bfx)=\sum\nolimits_{i=1}^n a_ix_i^k$ be a polynomial in $\bZ[\bfx]$ with $n\geq 2$ even, $k\geq 2$, and $a_i\neq 0$ for all $1\leq i\leq n$. Then, for any sufficiently large prime $p$, if {\color{black}$r$ is a positive integer and $E_1,E_2$ are subsets of $(\bZ/p^r\bZ)^n$ with} the density $\delta_{E_1, E_2}\gg p^{-\frac{n}{2}}$, we have $|\Delta_{n, r}(E_1, E_2)|\gg p^r$.
\end{conjecture}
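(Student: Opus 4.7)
\textbf{Proof plan for Conjecture \ref{con-even}.}
The strategy is to refine the Fourier framework underlying Theorem \ref{main}. Set $S_j = \{\bfz \in (\bZ/p^r\bZ)^n :\, F(\bfz)\equiv j\,(\mod p^r)\}$. Fourier inversion on $(\bZ/p^r\bZ)^n$ decomposes
\[
N_j := \#\{(\bfx,\bfy)\in E_1\times E_2:\, F(\bfx-\bfy)\equiv j\,(\mod p^r)\} \,=\, \frac{|E_1||E_2||S_j|}{p^{rn}} + \mathrm{Err},
\]
where
\[
\mathrm{Err} \,=\, \frac{1}{p^{rn}} \sum_{\bfm\neq \bfo} \widehat{\mathbf{1}_{S_j}}(\bfm)\, \widehat{\mathbf{1}_{E_1}}(\bfm)\, \overline{\widehat{\mathbf{1}_{E_2}}(\bfm)}.
\]
The Hensel-lifting argument underlying Theorem \ref{main} (applicable because $\nabla F\not\equiv \bfo\,(\mod p)$ on $S_j\,(\mod p)$ for $j\in U_r$) gives $|S_j|\sim p^{r(n-1)}$, so under the assumption $\delta_{E_1,E_2}\gg p^{-n/2}$ the main term is already of size $\gg p^r$. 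The task therefore reduces to showing that $|\mathrm{Err}|$ is strictly smaller than the main term, uniformly in $r$.

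The Fourier decay of the $p$-adic sphere, derived by Hensel-lifting the computation to $r=1$ and evaluating a Gauss/Kloosterman-type sum, gives $|\widehat{\mathbf{1}_{S_j}}(\bfm)|\ll p^{r(n-1)/2}$ uniformly in $\bfm$ with $\bfm\not\equiv \bfo\,(\mod p)$; this bound is sharp in even dimensions. Inserting it into the Cauchy--Schwarz$+$Parseval estimate
\[
|\mathrm{Err}|\,\ll\, \max_{\bfm\neq\bfo}\big|\widehat{\mathbf{1}_{S_j}}(\bfm)\big|\,\sqrt{|E_1||E_2|}
\]
only recovers the $\delta\gg p^{-(n-1)/2}$ threshold of Theorem \ref{main}, missing a factor of $p^{r/2}$. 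To close this gap, the natural route is a restriction/extension estimate of Stein--Tomas shape: for every $f:(\bZ/p^r\bZ)^n\to \bC$,
\[
\Big(\, \sum_{\bfm} \big|\widehat{\mathbf{1}_{S_j}}(\bfm)\big|^{2}\, |\hat{f}(\bfm)|^{2} \,\Big)^{1/2} \,\ll\, p^{r(n-1)-\frac{rn}{2}} \,\|f\|_2,
\]
i.e.\ a gain of $p^{-r/2}$ over the trivial bound, applied with $\hat{f}=\overline{\widehat{\mathbf{1}_{E_2}}}$. Combined with the large orthogonal/automorphism group preserving $F$ (the ``group theoretic argument'' alluded to in the introduction, and the mechanism behind the two-dimensional case proved in this paper), this would yield the conjectured threshold.

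The principal obstacle is that the $r=1$ case of the restriction estimate above is the finite-field even-dimensional distance conjecture itself, which is open for $n\geq 4$ and has been proved only in weaker forms for $n=2$ (the $4/3$ bound of Chapman--Erdogan--Hart--Iosevich--Koh and the $5/4$ bound of Murphy--Petridis--Pham--Rudnev--Stevens). A complete proof of Conjecture \ref{con-even} therefore appears to require genuinely new input already at the finite-field level. Once such input is available, however, the Hensel/fiber-stratification machinery developed in the present paper -- organising $\bfm\in (\bZ/p^r\bZ)^n$ by the $p$-adic valuation of its reduction and reducing each stratum to the $r=1$ problem -- should propagate the bound to all $r$ uniformly, so that the $p$-adic statement is essentially reducible to its finite-field counterpart.
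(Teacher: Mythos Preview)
The statement you are addressing is a \emph{conjecture}; the paper does not prove it. What the paper offers toward Conjecture~\ref{con-even} is (a) Example~\ref{ex2}, showing that the exponent $-n/2$ cannot be improved when $k=2$, and (b) partial progress in dimension $n=2$ via Theorems~\ref{un-conditional} and~\ref{conditional}. There is therefore no proof in the paper to compare your proposal against, and your own write-up is, appropriately, not a proof either: you correctly isolate the needed restriction-type inequality and explain that its $r=1$ case is already the open finite-field even-dimensional distance conjecture.

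One quantitative correction to your diagnosis: the gap between Theorem~\ref{main} and Conjecture~\ref{con-even} is a single factor of $p^{1/2}$, not $p^{r/2}$. With the paper's normalisation, Lemma~\ref{lemma_fourier} gives $|\widehat{1_{S_{r,j}}}(\bfm)|\ll p^{-r-(n-1)/2}$ uniformly over $\bfm\not\equiv\bfo\,(\mod p^r)$; feeding this into Cauchy--Schwarz and Parseval yields exactly the threshold $\delta\gg p^{-(n-1)/2}$, which differs from the conjectured $p^{-n/2}$ by $p^{1/2}$ independently of $r$. Your stated decay $|\widehat{\mathbf{1}_{S_j}}(\bfm)|\ll p^{r(n-1)/2}$ for $\bfm\not\equiv\bfo\,(\mod p)$ is too optimistic for $r\ge 3$ (compare the $\kappa=0$ case of Lemma~\ref{lemma_fourier}, which gives the unnormalised bound $p^{(r-1)(n-1)}$), and in any event the dominant contribution to the error comes from the opposite stratum $\bfm\equiv\bfo\,(\mod p^{r-1})$. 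This actually strengthens your final point: because the shortfall is only $p^{1/2}$, the $p$-adic conjecture really is, via the Hensel/stratification machinery of Section~2, essentially equivalent to its finite-field base case.
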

Example \ref{ex2} in the last section will support this conjecture for the case $k=2$.

In the setting of finite fields, i.e. $r=1$, as mentioned above, Chapman, Erdogan, Hart, Iosevich, and Koh \cite{chapman} proved the exponent $4/3$ by using an extension theorem associated to circles in the plane. Another proof by using geometric properties of rigid-motions in the plane can be found in \cite{HLR}. Alex Iosevich asked in several conferences/workshops if the $4/3$-parallel result exists
in the setting of finite p-adic rings. In this paper, we give an affirmative answer to his question.

\begin{theorem}\label{un-conditional}
{\color{black}Let $F$ be the distance function, $p$ be an odd prime, and $r\ge 1$ be an integer.} Let $E\subset (\mathbb{Z}/p^r\mathbb{Z})^2$ with $\delta_E \gg p^{-\frac{2}{3}}$. Then $|\Delta_{2, r}(E)|\gg p^r$. Here the implied constant is independent of $r$.
\end{theorem}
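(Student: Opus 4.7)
The plan is to adapt the two-dimensional finite-field argument of Chapman, Erdogan, Hart, Iosevich, and Koh~\cite{chapman} to the finite $p$-adic ring setting. Let $f=\mathbf{1}_E$ and, for $t\in\Z/p^r\Z$, set
\[
\nu(t)\;:=\;\#\{(\bfx,\bfy)\in E\times E:\,\|\bfx-\bfy\|=t\}.
\]
Cauchy--Schwarz yields $|\Delta_{2,r}(E)|\cdot\sum_t\nu(t)^2\ge|E|^4$, so it suffices to prove the second-moment bound $\sum_t\nu(t)^2\ll|E|^4/p^r$ under $|E|\gg p^{2r-2/3}$. A routine Fourier inversion on $(\Z/p^r\Z)^2$ splits
\[
\nu(t)\;=\;\frac{|E|^2|S_t|}{p^{2r}}\;+\;\frac{1}{p^{2r}}\sum_{\bfm\ne\bfo}\widehat{\mathbf{1}_{S_t}}(\bfm)\,|\hat f(\bfm)|^2,
\]
with $S_t:=\{\bfz\in(\Z/p^r\Z)^2:\|\bfz\|=t\}$. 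Since $|S_t|\approx p^r$ for $t\in U_r$, the main-term contribution to $\sum_t\nu(t)^2$ is already of the desired size $|E|^4/p^r$, and the task reduces to controlling the error contribution.

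Squaring the error and summing in $t$ produces
\[
\sum_{t}\Bigl(\nu(t)-\tfrac{|E|^2|S_t|}{p^{2r}}\Bigr)^{\!2} = \frac{1}{p^{4r}}\sum_{\bfm,\bfl\ne\bfo}|\hat f(\bfm)|^2|\hat f(\bfl)|^2\,K(\bfm,\bfl),\quad K(\bfm,\bfl):=\sum_t\widehat{\mathbf{1}_{S_t}}(\bfm)\overline{\widehat{\mathbf{1}_{S_t}}(\bfl)}.
\]
The kernel $K$ counts pairs $(\bfz,\bfw)$ with $\|\bfz\|=\|\bfw\|$ weighted by a character; this is where the group-theoretic argument enters. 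The orthogonal group $O_2(\Z/p^r\Z)$ preserves $\|\cdot\|$ and acts (essentially) transitively on each unit-norm sphere, so stratifying by the $p$-adic valuation $v_p(\|\bfm\|)$ localises $K(\bfm,\bfl)$, up to acceptable boundary terms from degenerate strata, on pairs $(\bfm,\bfl)$ lying on the same sphere $S_s$. Up to lower-order contributions this yields
\[
\sum_t\nu(t)^2 \;\lesssim\; \frac{|E|^4}{p^r}\;+\;\frac{1}{p^{2r}}\sum_{s\in U_r}\Bigl(\sum_{\bfm\in S_s}|\hat f(\bfm)|^2\Bigr)^{\!2}.
\]

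The decisive input is then an $L^2$-restriction estimate for the circles $S_t\subset(\Z/p^r\Z)^2$, uniform in $r$: concretely,
\[
\sum_{\bfm\in S_t}|\hat f(\bfm)|^2 \;\ll\; p^r\,|E|^{3/2}\qquad\text{for }f=\mathbf{1}_E,
\]
the natural $p$-adic analogue of the $L^{4/3}\to L^2$ extension/Stein--Tomas inequality underlying~\cite{chapman}. Combining it with the max-times-sum trick and Plancherel ($\sum_\bfm|\hat f(\bfm)|^2=p^{2r}|E|$) gives an error contribution $\ll p^r|E|^{5/2}$, which is dominated by $|E|^4/p^r$ exactly when $|E|\gg p^{4r/3}$ --- a condition that for every $r\ge 1$ follows from the uniform hypothesis $|E|\gg p^{2r-2/3}$, and is tight at $r=1$. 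The main obstacle is proving the restriction estimate uniformly in $r$: in $\bF_p^2$ it follows from non-vanishing curvature via classical Gauss-sum bounds for $\widehat{\mathbf{1}_{S_t}}(\bfm)$, whereas in $(\Z/p^r\Z)^2$ the transform is highly sensitive to $v_p(\bfm)$, and the strata $\bfm\in p^a(\Z/p^r\Z)^2\setminus p^{a+1}(\Z/p^r\Z)^2$ must be estimated separately (via Hensel-type lifting reducing to character sums mod $p$) and reassembled with the correct multiplicities. Verifying that this stratification is compatible with the $O_2$-orbit decomposition from the previous step is the principal technical hurdle.
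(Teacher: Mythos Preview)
Your outline follows the unpinned $L^2$ method of \cite{chapman}: Cauchy--Schwarz on $\nu(t)$, then control of the kernel $K(\bfm,\bfl)=\sum_t\widehat{1_{S_t}}(\bfm)\overline{\widehat{1_{S_t}}(\bfl)}$, then a circle restriction estimate. Both non-routine steps are asserted rather than proved. The claim that $K$ ``localises\ldots on pairs $(\bfm,\bfl)$ lying on the same sphere $S_s$'' is exactly the computation the paper singles out as the obstruction to transporting the \cite{chapman} argument: in the discussion after the theorem the authors write that ``the main difficulty arises when finding the explicit form of the sum $\sum_{j}\widehat{1_{C_{r, j}}}(\bfm)\widehat{1_{C_{r, j}}}(\bfm')$''. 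Over $\bF_p$ this localisation drops out of a Gauss-sum identity; over $\bZ/p^r\bZ$ the non-unit radii and the valuation filtration on $\bfm$ interact in a way that is not handled by a one-line stratification remark. Your restriction bound $\sum_{\bfm\in S_t}|\hat f(\bfm)|^2\ll p^r|E|^{3/2}$ is likewise unproved; the estimate that actually follows by dualising the paper's extension theorem (Theorem~\ref{resitriction}) carries an extra factor $p^{(r-1)/2}$, although that weaker form would still close your numerics. You acknowledge these as ``the principal technical hurdle'', but the proposal does not clear it.

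The paper sidesteps the kernel entirely by using a \emph{pinned} Cauchy--Schwarz (learned from \cite{MMM}): it bounds $|\Delta_{2,r}(E)|\gg|E|^3/\cN$ with $\cN=\sum_{\bfx\in E}\sum_{j\not\equiv 0\,(p)}\bigl(\sum_{\bfy\in E:\,\|\bfx-\bfy\|=j}1\bigr)^2$, and then dominates the inner square by a sum over $\theta\in G_r=SO_2(\bZ/p^r\bZ)$. After Fourier expansion this produces $\sum_{\bfm}\sum_\theta\widehat{1_E}(\bfm)\overline{\widehat{1_E}(\theta\bfm)}e_{p^r}((\bfm-\theta\bfm)\cdot\bfx)$, which is stratified by $v_\bfm=v$; at level $v$ the problem is pushed down to $(\bZ/p^{r-v}\bZ)^2$ via the averaged indicator $g_{E,r-v}$, and the $L^2\to L^4$ extension estimate for $C_{r-v,\tilde j}$ (Theorem~\ref{resitriction}, proved from the elementary incidence bound of Lemma~\ref{energy1}) is applied directly. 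Summing the levels gives $\cN\ll|E|^3/p^r+p^{2r-1}|E|^{3/2}$, and the second term is absorbed precisely when $|E|\gg p^{2r-2/3}$. Thus the paper's reformulation is what makes both of your hurdles tractable: the group action replaces the kernel computation, and the extension theorem is established independently rather than invoked.
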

Compared to Theorem \ref{main}, we have a smaller threshold density, namely, of $p^{-\frac{2}{3}}$. Regarding the proof of Theorem \ref{un-conditional}, it is challenging if we just want to follow the methods in \cite{chapman} or in \cite{HLR}. On the one hand, in a recent exposition, Liao showed that the method in \cite{HLR} implies too many degenerate cases, even with $r=2$, which are very hard to deal with. On the other hand, a direct computation shows that the same happens with the approach developed in \cite{chapman}, namely, the main difficulty arises when finding the explicit form of the sum $\sum_{j\in \mathbb{Z}/p^r\mathbb{Z}}\widehat{1_{C_{r, j}}}(\bfm)\widehat{1_{C_{r, j}}}(\bfm')$ for $\bfm$ and $\bfm'$ in $(\mathbb{Z}/p^r\mathbb{Z})^n$, where $C_{r, j}$ is the circle centered at the origin of radius $j$.

In this paper, to prove Theorem \ref{un-conditional}, we first use a group theoretic argument, which we learned from \cite[Appendix]{MMM}, to reduce the theorem to an extension type question for circles, then the rest is devoted to study such type estimate in the p-adic setting. The extension theorems, we obtain in Section 4, are of independent interest and are expected to have many other applications.

Unlike the finite field case, in finite p-adic rings, we are able to provide a large family of sets $E$ such that Conjecture \ref{con-even} is true, namely, the family of sets $E$ satisfying the property that the density of the fibre of the natural projection over each element in $(\mathbb{Z}/p\mathbb{Z})^2$ is not too large in $E$. The precise statement reads as follows.
\begin{theorem}\label{conditional}
{\color{black}Let $F$ be the distance function, $p$ be an odd prime, and $r\ge 2$ be an integer.} Let $E\subset (\mathbb{Z}/p^r\mathbb{Z})^2$ with $\delta_E\gg p^{-1}$. Assume that
\[\color{black}
\#\{(\bfx_1,\bfx_2)\in E^2:\, \bfx_1\equiv \bfx_2\,(\mod p)\}\ll p^{2r-\frac{7}{3}}|E|.
\]
Then $|\Delta_{2, r}(E)|\gg p^r$.
\end{theorem}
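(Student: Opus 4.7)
The plan is to apply the standard Cauchy--Schwarz moment method. Setting $\nu(t):=\#\{(\bfx,\bfy)\in E^2:\|\bfx-\bfy\|=t\}$, one has $|E|^4=(\sum_t\nu(t))^2\le |\Delta_{2,r}(E)|\cdot\sum_t\nu(t)^2$, so the task reduces to proving the ``energy'' bound
\[
\sum_t\nu(t)^2\,\ll\, |E|^4/p^r.
\]
The quadruples $(\bfx_1,\bfy_1,\bfx_2,\bfy_2)\in E^4$ contributing to this sum will be split according to whether each pair $(\bfx_i,\bfy_i)$ satisfies $\bfx_i\equiv\bfy_i\,(\mod p)$.

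For the \emph{degenerate} contribution (say $\bfx_1\equiv\bfy_1\,(\mod p)$, the other case being symmetric), the fiber hypothesis bounds the number of such pairs $(\bfx_1,\bfy_1)$ by $\ll p^{2r-7/3}|E|$. A degenerate pair forces the common distance $\|\bfx_i-\bfy_i\|$ to lie in $p^2\bZ/p^r\bZ$, a set of size $p^{r-2}$; a Cauchy--Schwarz argument exploiting this restricted range of distances, together with $|E|\gg p^{2r-1}$, keeps the degenerate contribution of size $o(|E|^4/p^r)$.

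The genuine task is the \emph{non-degenerate} contribution, where $\bfx_i-\bfy_i\not\equiv\bfo\,(\mod p)$ for both $i$. For these I would invoke the circle-extension machinery to be developed in Section~4 (the same machinery used in proving Theorem~\ref{un-conditional}). The essential point is that this extension bound should be sensitive to the projection: its effective input is the density of $\pi(E)\subseteq(\bZ/p\bZ)^2$, where $\pi$ is the natural mod-$p$ reduction. By Cauchy--Schwarz combined with the fiber hypothesis,
\[
|\pi(E)|\,\ge\,\frac{|E|^2}{\sum_{\bar{\bfz}}|E_{\bar{\bfz}}|^2}\,\gg\,\frac{|E|^2}{p^{2r-7/3}|E|}\,=\,\frac{|E|}{p^{2r-7/3}}\,\gg\,\frac{p^{2r-1}}{p^{2r-7/3}}\,=\,p^{4/3},
\]
so $\pi(E)$ has density $\gg p^{-2/3}$ in $(\bZ/p\bZ)^2$, precisely the Chapman--Erdogan--Hart--Iosevich--Koh threshold for the $\bF_p$-circle-extension estimate. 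Lifting this bound from $\bF_p^2$ to $(\bZ/p^r\bZ)^2$ through the extension machinery of Section~4 should then supply the needed non-degenerate bound.

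The main obstacle will be rigorously showing that the circle-extension theorem from Section~4 does ``factor through'' the projection $\pi$, so that the density entering the bound is that of $\pi(E)$ rather than of $E$ itself. A secondary technical point is the handling of difference vectors $\bfv\not\equiv\bfo\,(\mod p)$ that lie on the null cone $\{v_1^2+v_2^2\equiv 0\,(\mod p)\}$ --- nontrivial precisely when $-1$ is a quadratic residue mod $p$; these give non-unit distances but are not captured by the fiber hypothesis, and should be controllable by a parallel fiberwise Cauchy--Schwarz applied over the $O(p)$ null-cone residue classes.
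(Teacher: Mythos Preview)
Your proposal identifies the right ingredients---the fibre hypothesis and the extension machinery of Section~4---but connects them differently from the paper, and the connection as you describe it does not close.

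The paper does not split in physical space according to whether $\bfx\equiv\bfy\pmod p$. Instead it bounds the \emph{pinned} quantity $\mathcal{N}=\sum_{j\not\equiv 0\,(\mathrm{mod}\,p)}\sum_{\bfx\in E}\big(\#\{\bfy\in E:\|\bfx-\bfy\|=j\}\big)^2$, rewrites it via the $G_r$-action on the Fourier side, and decomposes by the $p$-adic valuation $v=v_\bfm$ of the frequency. For each $v$ the extension theorem yields a bound of the shape
\[
p^{2r}\sum_{\bfx\in E}\cE_v(\bfx)\ \ll\ p^{\frac{3r+7v-1}{2}}\Big(\sum_{\tilde{\bfx}\,(\mathrm{mod}\,p^{r-v})}|g_{E,r-v}(\tilde{\bfx})|^2\Big)^{3/2},
\]
where $g_{E,r-v}$ is the normalised fibre-count over the projection to $(\bZ/p^{r-v}\bZ)^2$. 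For $v\le r-2$ the trivial bound $\sum|g_{E,r-v}|^2\le |E|/p^{2v}$ already suffices once $|E|\gg p^{2r-1}$. The fibre hypothesis is used \emph{only} at the single level $v=r-1$, where $\sum_{\tilde{\bfx}\,(\mathrm{mod}\,p)}|g_{E,1}(\tilde{\bfx})|^2$ equals $p^{-(4r-4)}\#\{(\bfx_1,\bfx_2)\in E^2:\bfx_1\equiv\bfx_2\pmod p\}$; the hypothesis thus feeds in directly as $\ll p^{-2r+5/3}|E|$, and this closes the estimate.

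Two concrete gaps in your plan. First, your degenerate bound does not close for $r\ge 3$: knowing the distance-support lies in $p^2\bZ/p^r\bZ$ gives, via Cauchy--Schwarz, only a \emph{lower} bound on $\sum_t\nu_{\mathrm{deg}}(t)^2$; the crude upper bound $\sum_t\nu_{\mathrm{deg}}(t)^2\le D^2\ll p^{4r-14/3}|E|^2$ (with $D$ the number of degenerate pairs) exceeds $|E|^4/p^r$ as soon as $r\ge 3$ under $|E|\asymp p^{2r-1}$. Second, your non-degenerate claim misreads what the extension machinery takes as input: it does not ``see'' $|\pi(E)|$ but rather the $L^2$ mass $\|g_{E,1}\|_2^2$, which \emph{is} the fibre-collision count up to normalisation. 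Your deduction $|\pi(E)|\gg p^{4/3}$, while correct, is a strictly weaker consequence of the hypothesis (Cauchy--Schwarz runs only one way here) and is not the quantity that enters the bound. The ``factoring through $\pi$'' you anticipate does occur, but on the Fourier side at the level $v=r-1$, not through a physical-space splitting of pairs.
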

\begin{corollary}\label{conditiona2}
{\color{black}Let $F$ be the distance function, $p$ be an odd prime, and $r\ge 2$ be an integer.} Let $E\subset (\mathbb{Z}/p^r\mathbb{Z})^2$ with $\delta_E\gg p^{-1}$. Assume that
\[\#\{\bfx'\in E\colon \bfx'\equiv\bfx\,(\mod p)\}\ll p^{2r-\frac{7}{3}}\]
for each {\color{black}$\bfx\in E$}. Then $|\Delta_{2, r}(E)|\gg p^r$.
\end{corollary}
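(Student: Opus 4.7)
The plan is to deduce this corollary by a direct reduction to Theorem \ref{conditional}: the hypothesis of the corollary is a pointwise (uniform) upper bound on the fibres of the reduction map $E\to (\bZ/p\bZ)^2$, while the hypothesis of Theorem \ref{conditional} is an averaged bilinear bound on the number of ``near-collisions'' modulo $p$. The bridge between the two is obtained by summing the pointwise bound.

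Concretely, I would write
\[
\#\{(\bfx_1,\bfx_2)\in E^2:\, \bfx_1\equiv \bfx_2\,(\mod p)\} \;=\; \sum_{\bfx_1\in E}\#\{\bfx_2\in E:\, \bfx_2\equiv \bfx_1\,(\mod p)\},
\]
and then apply the corollary's assumption $\#\{\bfx'\in E:\, \bfx'\equiv \bfx\,(\mod p)\}\ll p^{2r-7/3}$ (valid for every $\bfx\in E$, and in particular for $\bfx=\bfx_1$) inside the sum. This immediately yields
\[
\#\{(\bfx_1,\bfx_2)\in E^2:\, \bfx_1\equiv \bfx_2\,(\mod p)\} \;\ll\; p^{2r-7/3}|E|,
\]
which is precisely the hypothesis required to invoke Theorem \ref{conditional}. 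Together with the density assumption $\delta_E\gg p^{-1}$ (carried over verbatim from the corollary), Theorem \ref{conditional} then gives $|\Delta_{2,r}(E)|\gg p^r$, completing the proof. There is no real obstacle here: the corollary is simply the statement that a uniform pointwise control on the fibres implies the averaged second-moment control used in Theorem \ref{conditional}, and the only thing to verify is that the constants and the power of $p$ match, which they do exactly.
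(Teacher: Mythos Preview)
Your proposal is correct and matches the paper's proof essentially line for line: the paper also writes
\[
\#\{(\bfx_1,\bfx_2)\in E^2:\, \bfx_1\equiv \bfx_2\,(\mod p)\} = \sum_{\bfx_1\in E} \#\{\bfx_2\in E:\, \bfx_2\equiv \bfx_1 \,(\mod p)\} \ll p^{2r-7/3}|E|
\]
and then invokes Theorem \ref{conditional}.
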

The implied constants in {\color{black}Theorem \ref{conditional} and Corollary \ref{conditiona2}} are independent of $r$.

In addition to the restriction-theoretic approach developed above, the paper also introduces
 an $\mathbb F_p$--to--$\mathbb Z/p^r\mathbb Z$ transfer
principle for the standard quadratic distance in arbitrary dimensions. This principle allows us to lift results on the number of isosceles triangles from $\mathbb{F}_p^n$ to $(\mathbb{Z}/p^r\mathbb{Z})^n$. As a consequence, we obtain the following results by using the currently available estimates in dimensions two and four from the literature. \footnote{This paper has been under review for almost two years. To keep
the manuscript up to date and to provide a more complete account of the problem, we add the following two theorems, which incorporate recent developments obtained after the original submission. Consequently, the present arXiv version may differ from the version eventually published in the journal.}

\begin{theorem}\label{un-conditional-26}
{\color{black}Let $F$ be the distance function, $p$ be an odd prime, and $r\ge 1$ be an integer.} Let $E\subset (\mathbb{Z}/p^r\mathbb{Z})^2$ with $\delta_E \gg p^{-\frac{3}{4}}$. Then $|\Delta_{2, r}(E)|\gg p^r$. Here, the implied constant is independent of $r$.
\end{theorem}
\begin{theorem}\label{un-conditional-27}
{\color{black}Let $F$ be the distance function, $p$ be an odd prime, and $r\ge 1$ be an integer.} Let $E\subset (\mathbb{Z}/p^r\mathbb{Z})^4$ with $\delta_E \gg p^{-\frac{56}{37}+\varepsilon}$ for any $\varepsilon>0$. Then $|\Delta_{4, r}(E)|\gg p^r$. Here, the implied constant is independent of $r$.
\end{theorem}
\subsection{Results on geometric/graph configurations}
Let $F(\bfx)$ be a given polynomial in $\bZ[\bfx]$ in $n\geq 2$ variables. We call $F(\bfx)$ \textit{good} if it satisfies the conditions of Theorem \ref{main}. In the following, we prove several extensions of the distance result in the setting of geometric/graph configurations in which the polynomials $F(\bfx)$ are assumed to be good.

The first result is on the existence of rectangles of given side-length $j\in U_r$, which is an extension of an earlier result due to Lyall and Magyar in \cite{LM}.



\begin{theorem} \label{rectangle}
Let $F$ be a good polynomial and $0<\delta'<\delta<1$ be given numbers. There exists $\epsilon>0$ such that the following holds. For any odd prime $p$, any positive integer $r$, and any integer $j$ with $(j,p)=1$, if $E\subset (\mathbb{Z}/p^r\mathbb{Z})^{2n}$, with $p^{-\frac{2n-1}{2}}\le \epsilon$, satisfies $\delta_E\ge \delta$, then $E$ contains at least $\delta' p^{4rn-2}$ tuples
\[(\bfu_1, \bfv_1),~(\bfu_1, \bfv_2), ~(\bfu_2, \bfv_1), ~(\bfu_2, \bfv_2)\]
that form rectangles of side-length $j$, i.e. $F(\bfu_1-\bfu_2)\equiv F(\bfv_1-\bfv_2)\equiv j\,(\mod p^r)$.
\end{theorem}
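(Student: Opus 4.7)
The plan is to count the rectangles by writing
\[R(j) := \sum_{\substack{\bfu_1,\bfu_2,\bfv_1,\bfv_2\\ F(\bfu_1-\bfu_2)\equiv F(\bfv_1-\bfv_2)\equiv j\,(\mod p^r)}} 1_E(\bfu_1,\bfv_1)\,1_E(\bfu_1,\bfv_2)\,1_E(\bfu_2,\bfv_1)\,1_E(\bfu_2,\bfv_2),\]
and applying the counting (asymptotic) refinement of Theorem \ref{main} twice: first in the inner $(\bfv_1,\bfv_2)$-variables (with $\bfu_1,\bfu_2$ fixed), and then in the outer $(\bfu_1,\bfu_2)$-variables after swapping the order of summation. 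For fixed $\bfu_1,\bfu_2$ the inner sum counts distance-$j$ pairs in the intersected fibre $A_{\bfu_1,\bfu_2} := \{\bfv : (\bfu_1,\bfv),(\bfu_2,\bfv) \in E\}$, and the Fourier-analytic proof of Theorem \ref{main} in fact yields the asymptotic
\[\#\{(\bfv_1,\bfv_2)\in A^2 :\, F(\bfv_1-\bfv_2)\equiv j\,(\mod p^r)\}=\frac{|A|^2}{p^r}+\mathrm{Err}_F(A),\]
whose error is governed by the exponential sum estimates of conditions (iii)--(iv) of Theorem \ref{main}.

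Substituting, the leading contribution to $R(j)$ equals $p^{-r}\sum_{F(\bfu_1-\bfu_2)\equiv j}|A_{\bfu_1,\bfu_2}|^2$. Swapping the order of summation rewrites this as $p^{-r}\sum_{\bfv_1,\bfv_2}\#\{(\bfu_1,\bfu_2)\in B_{\bfv_1,\bfv_2}^2 :\, F(\bfu_1-\bfu_2)\equiv j\}$, where $B_{\bfv_1,\bfv_2} := \{\bfu : (\bfu,\bfv_1),(\bfu,\bfv_2)\in E\}$, and a second application of the asymptotic produces a further leading term $p^{-2r}\sum_{\bfv_1,\bfv_2}|B_{\bfv_1,\bfv_2}|^2$. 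Writing $E_\bfu := \{\bfv:(\bfu,\bfv)\in E\}$, two successive uses of the Cauchy--Schwarz inequality give
\[\sum_{\bfv_1,\bfv_2}|B_{\bfv_1,\bfv_2}|^2 \;\geq\; \frac{\bigl(\sum_\bfu|E_\bfu|^2\bigr)^2}{p^{2rn}} \;\geq\; \frac{|E|^4}{p^{4rn}},\]
so the leading contribution to $R(j)$ is at least $\delta_E^4\,p^{r(4n-2)} \geq \delta^4\,p^{r(4n-2)}$, which dominates the claimed bound $\delta'\,p^{r(4n-2)}$.

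The remaining task is to show that the three sources of error -- the inner error, the outer error, and the cross terms arising when the asymptotic is substituted into the main sum -- are jointly strictly smaller than the leading term. Each is handled by combining the exponential sum bounds of conditions (iii)--(iv) with Cauchy--Schwarz and a Parseval-type identity on the fibres. The hypothesis $p^{-(2n-1)/2}\leq\epsilon$ is used to ensure that, with $\epsilon$ chosen sufficiently small in terms of $\delta$ and $\delta'$, the total error is at most $(\delta^4-\delta')\,p^{r(4n-2)}$, whence $R(j) \geq \delta'\,p^{r(4n-2)}$. The principal obstacle will be extracting from the proof of Theorem \ref{main} an asymptotic whose error term is uniform in $r$: this requires lifting the mod-$p$ exponential sum bounds to mod-$p^r$ bounds via the Hensel-type descending argument developed earlier in the paper, and then verifying that iterating the asymptotic does not introduce $r$-dependent losses. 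Once the uniform asymptotic is in place, the remaining factorisation and Cauchy--Schwarz manipulations are routine.
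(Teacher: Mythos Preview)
Your outline is sound and would give the theorem, but it takes a different route from the paper. The paper does not iterate the asymptotic of Theorem~\ref{main} by hand; instead it recasts the problem in the language of pseudo-random graphs. It observes that the Cayley graph $G$ on $(\mathbb{Z}/p^r\mathbb{Z})^n$ with generating set $S_{r,j}=\{\bfx:F(\bfx)\equiv j\}$ is an $(N,d,\lambda)$-graph with $N=q^n$, $d=(1+o(1))q^{n-1}$, and $\lambda\ll q^{n-1}p^{-(n-1)/2}$ --- the eigenvalue bound being precisely Lemma~\ref{lemma_fourier}, since non-trivial eigenvalues of a Cayley graph are $q^n\widehat{1_{S_{r,j}}}(\bfm)$ --- and then invokes as a black box a rectangle-counting theorem for Cartesian products $G\square H$ of pseudo-random graphs from~\cite{TP}. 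Your two applications of the distance asymptotic are exactly two applications of the expander mixing lemma hidden inside that black box, and your Cauchy--Schwarz chain recovers the main term the spectral argument produces. What your approach buys is self-containment and a transparent view of where the Hensel-lifted Fourier bounds enter; what the paper's buys is brevity and a clean separation of the arithmetic input (the eigenvalue bound) from the purely combinatorial rectangle count. One place you should firm up: the error analysis is only sketched. Concretely, after the inner step you need $\sum_{F(\bfu_1-\bfu_2)\equiv j}|A_{\bfu_1,\bfu_2}|\le (1+c_1)q^{n-1}|E|$ (swap to $\sum_\bfv$ and bound each fibre count trivially by $|E'_\bfv|\cdot|S_{r,j}|$), and after the outer step $\sum_{\bfv_1,\bfv_2}|B_{\bfv_1,\bfv_2}|=\sum_\bfu|E_\bfu|^2\le q^n|E|$; both errors then come out as $O(\delta_E\,p^{-(n-1)/2}q^{4n-2})$, which is enough once $p^{-(n-1)/2}$ is small.
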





If we only want to count the number of quadruples $(\bfu_1, \bfu_2, \bfu_3, \bfu_4)$ in $E$ that form a cycle, i.e., $F(\bfu_1-\bfu_2)=F(\bfu_2-\bfu_3)=F(\bfu_3-\bfu_4)=F(\bfu_4-\bfu_1)\equiv j\,(\mod p^r)$, then a weaker condition is sufficient.

\begin{theorem}\label{cycle}
{\color{black}Let $F$ be a good polynomial and $p$ be an odd prime. There is some $C>0$ such that, if $r$ is a positive integer, $j$ is an integer with $(j, p)=1$, and $E$ is a subset of $(\mathbb{Z}/p^r\mathbb{Z})^n$ with $\delta_E\geq Cp^{-\frac{n-1}{2}}$, then $E$ contains cycles of length $4$ with distinct vertices and of side-length $j$.}
\end{theorem}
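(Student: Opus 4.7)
The plan is to count ordered 4-tuples $(\bfu_1, \bfu_2, \bfu_3, \bfu_4) \in E^4$ satisfying $F(\bfu_i - \bfu_{i+1}) \equiv j \pmod{p^r}$ cyclically (call this total $N_4$), bound the degenerate contribution from above, and show that the balance is positive. Let $N_1 := \#\{(\bfu, \bfv) \in E^2 : F(\bfu - \bfv) \equiv j\}$, $d(\bfu) := \#\{\bfv \in E : F(\bfu - \bfv) \equiv j\}$, $T := \sum_{\bfu \in E} d(\bfu)^2$, and $M(\bfu_1, \bfu_3) := \#\{\bfw \in E : F(\bfu_1 - \bfw) \equiv F(\bfw - \bfu_3) \equiv j\}$, so that $N_4 = \sum_{\bfu_1, \bfu_3 \in E} M(\bfu_1, \bfu_3)^2$ with $\sum_{\bfu_1, \bfu_3} M = T$. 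A double Cauchy--Schwarz then gives the lower bound $N_4 \geq T^2/|E|^2 \geq N_1^4/|E|^4$.

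Reusing the Fourier-analytic main term behind Theorem \ref{main}, I would extract $N_1 = |E|^2 |S_j|/p^{rn} + \text{error}$, where $S_j := \{\bfx : F(\bfx) \equiv j \pmod{p^r}\}$ has cardinality $\asymp p^{r(n-1)}$ by Hensel lifting from conditions (i)--(ii) of Theorem \ref{main}; the density hypothesis absorbs the error, giving $N_1 \gg |E|^2/p^r$ and hence $N_4 \gg |E|^4/p^{4r}$. Since $j \not\equiv 0 \pmod{p}$ guarantees that consecutive vertices on a 4-cycle are automatically distinct, the only remaining degeneracies are $\bfu_1 = \bfu_3$ and/or $\bfu_2 = \bfu_4$, and inclusion--exclusion shows the degenerate count is bounded by $2T$. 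To control $T$ from above I would use Plancherel and the convolution theorem:
\[
T \leq \|1_E * 1_{S_j}\|_2^2 = \frac{1}{p^{rn}}\sum_{\bfm} |\widehat{1_E}(\bfm)|^2 |\widehat{1_{S_j}}(\bfm)|^2 \leq \frac{|E|^2 |S_j|^2}{p^{rn}} + |E| \max_{\bfm \neq \bfo} |\widehat{1_{S_j}}(\bfm)|^2,
\]
where the Fourier decay of $\widehat{1_{S_j}}$ coming from lifting conditions (iii)--(iv) of Theorem \ref{main} to $\bZ/p^r\bZ$ keeps the second term subordinate and yields $T \ll |E|^3/p^{2r}$.

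Subtracting, the number of non-degenerate 4-cycles is $\gg |E|^4/p^{4r} - O(|E|^3/p^{2r})$, which is positive as soon as $|E| \gg p^{2r}$, i.e.\ $\delta_E \gg p^{r(2-n)}$; for $n \geq 3$ and $r \geq 1$ this is implied by $\delta_E \geq Cp^{-(n-1)/2}$ after choosing $C$ sufficiently large. The main obstacle is securing the uniform-in-$r$ upper bound on $\max_{\bfm \neq \bfo} |\widehat{1_{S_j}}(\bfm)|$ that is simultaneously needed for the lower bound on $N_1$ (and hence $N_4$) and for the upper bound on $T$; this is precisely the cancellation-mod-$p^r$ at the heart of the proof of Theorem \ref{main}, which I would reuse here essentially as a black box, with the only new ingredient being the double Cauchy--Schwarz bookkeeping that keeps the degenerate count below the main term.
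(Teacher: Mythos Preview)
Your approach (second moment / double Cauchy--Schwarz on the common-neighbour function $M$) is genuinely different from the paper's. The paper never counts all $4$-cycles: it pigeonholes on the difference vector to find a single $\bfu\in S_{r,j}$ for which $E_1:=\{\bfy\in E:\bfy+\bfu\in E\}$ has size $\gg |E|^2/q^n$, then applies Theorem~\ref{main} a second time to $E_1$ to locate some $\bfu'\in S_{r,j}$ with $\bfu'\neq\bfu$ and a pair $(\bfy_2,\bfy_1)=(\bfy_2,\bfy_2+\bfu')\in E_1^2$; the parallelogram $(\bfy_1,\bfy_2,\bfy_2+\bfu,\bfy_1+\bfu)$ is then the desired cycle. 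The only subtraction in that argument is that the pairs in $E_1^2$ with difference exactly $\bfu$ number at most $|E_1|\ll |E_1|^2/q$, so no upper bound on $T$ is ever needed.

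There is a genuine gap in your route. First, the Plancherel step you wrote yields a main term $|E|^2|S_j|^2/q^n\asymp |E|^2q^{n-2}$, not $|E|^3/q^{2}$; these differ by a factor $\delta_E^{-1}$, and since you obtained the displayed inequality by dropping the constraint $\bfu\in E$ (which only enlarges $T$), the smaller bound does not follow. Second, and more fundamentally, even granting your optimistic $T\ll |E|^3/q^{2}$, your final comparison $|E|^4/q^4\gg T$ forces $|E|\gg q^{2}$, i.e.\ $\delta_E\gg q^{2-n}$. For $n=2$ this is $\delta_E\gg 1$, which is vacuous. You flag this yourself with ``for $n\ge 3$'', but the definition of a good polynomial in Theorem~\ref{main} starts at $n\ge 2$, so the theorem as stated is not covered. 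This is not repairable within the moment framework: when $n=2$ the Cauchy--Schwarz lower bound $N_4\ge N_1^4/|E|^4$ and the degenerate contribution $2T$ are of the same order over the whole nontrivial density range, so no choice of $C$ saves the subtraction. The paper's translate argument sidesteps exactly this, at the cost of producing only the special parallelogram-shaped cycles.
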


The next result is on the existence of $k$-chains, i.e. a graph of $k+1$ vertices $\bfu_1, \ldots, \bfu_{k+1}$ such that $F(\bfu_i-\bfu_{i+1})\equiv j\,(\mod p^r)$ for all $1\le i\le k-1$.
\begin{theorem}\label{chain}
{\color{black}Let $F$ be a good polynomial, $p$ be an odd prime, and $k\ge 1$ be an integer. There is some $C>0$ such that, if $r$ is a positive integer, $j$ is an integer with $(j, p)=1$, and $E$ is a subset of $(\mathbb{Z}/p^r\mathbb{Z})^n$ with $\delta_E\geq Cp^{-\frac{n-1}{2}}$, then $E$ contains $k$-chains of side-length $j$.}
\end{theorem}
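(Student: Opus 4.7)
I would prove Theorem~\ref{chain} by induction on $k$. The base case $k=1$ is immediate from Theorem~\ref{main} applied to $E_1=E_2=E$. For the inductive step, rather than attempt a qualitative extension, I would run a single Fourier-analytic count of the total number $P_k(E)$ of ordered $(k+1)$-tuples $(\bfu_1,\ldots,\bfu_{k+1})\in E^{k+1}$ with $F(\bfu_i-\bfu_{i+1})\equiv j\,(\mod p^r)$ for $1\le i\le k$, and show that the leading term dominates the error uniformly in $r$.

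Expanding each indicator $1_E$ and each sphere indicator $S_j(\bfw)=\mathbf{1}[F(\bfw)\equiv j\,(\mod p^r)]$ by Fourier inversion on $(\bZ/p^r\bZ)^n$ and applying orthogonality to the $k+1$ spatial sums leaves $k$ free Fourier parameters coupled by linear constraints along the chain. Reindexing through the partial sums $\mathbf{d}_\ell$, so that $\mathbf{d}_\ell$ is the accumulated parameter after $\ell$ edges (with $\mathbf{d}_0=\mathbf{d}_{k+1}=\bfo$), produces
\[
P_k(E)=\frac{1}{p^{rnk}}\sum_{\mathbf{d}_1,\ldots,\mathbf{d}_k}\widehat{1_E}(\mathbf{d}_1)\widehat{1_E}(\mathbf{d}_2-\mathbf{d}_1)\cdots\widehat{1_E}(\mathbf{d}_k-\mathbf{d}_{k-1})\widehat{1_E}(-\mathbf{d}_k)\prod_{\ell=1}^k\widehat{S_j}(-\mathbf{d}_\ell).
\]
The zero mode $\mathbf{d}_1=\cdots=\mathbf{d}_k=\bfo$ yields the main term $|E|^{k+1}N_j^k/p^{rnk}\asymp|E|^{k+1}/p^{rk}$, where $N_j=|\{\bfw:F(\bfw)\equiv j\,(\mod p^r)\}|\asymp p^{r(n-1)}$ by hypothesis~(ii) of Theorem~\ref{main} lifted from modulus $p$ to modulus $p^r$ by the Jacobian argument developed in the paper.

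For the error, I would split by $S\subseteq\{1,\ldots,k\}$, the set of indices on which $\mathbf{d}_\ell\neq\bfo$. The Fourier estimate $|\widehat{S_j}(\mathbf{a})|\ll p^{r(n-1)-(n-1)/2}$ for $\mathbf{a}\not\equiv\bfo$, which is already established inside the proof of Theorem~\ref{main} from hypotheses~(iii) and (iv) via the first-order Jacobian lift, supplies a saving of $p^{-(n-1)/2}$ per nonzero $\mathbf{d}_\ell$. The $\widehat{1_E}$ factors, strung along consecutive differences, are bounded by iterated Cauchy--Schwarz combined with Parseval $\sum_{\mathbf{c}}|\widehat{1_E}(\mathbf{c})|^2=p^{rn}|E|$. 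Each nonempty configuration $S$ then contributes at most $(|E|^{k+1}/p^{rk})(C\delta_E^{-1}p^{-(n-1)/2})^{|S|}$; summing over the $2^k-1$ nonempty subsets and choosing $C$ large in terms of $F$ and $k$ makes the total error at most half of the main term. Hence $P_k(E)\gg|E|^{k+1}/p^{rk}>0$, which exhibits a $k$-chain.

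The main obstacle is the $r$-uniform Fourier estimate on $\widehat{S_j}$: the good-polynomial hypotheses only provide cancellation modulo $p$, so one must propagate square-root cancellation to modulus $p^r$ with no $r$-dependent loss. This is exactly the content of the Jacobian-based lift the paper constructs in the proof of Theorem~\ref{main}, and the same lemma transfers verbatim to the chain count, which is why the threshold $Cp^{-(n-1)/2}$ persists from $k=1$ to all $k$. A minor remaining point is that the tuples counted above are not forced to have distinct vertices, but the contribution from degenerate tuples (two coordinates coinciding) is easily shown to be of strictly lower order than $|E|^{k+1}/p^{rk}$, so the chain can be taken with distinct vertices if one prefers the graph-theoretic meaning.
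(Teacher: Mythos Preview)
Your approach is correct and, once the normalisation in your displayed formula is fixed (the prefactor should be $q^{(k+1)n}$ rather than $q^{-nk}$, with $q=p^r$), it yields the claimed bound $P_k(E)\gg |E|^{k+1}/q^k$ uniformly in $r$. However, the route differs from the paper's. The paper does not expand $P_k(E)$ by hand; instead it observes that the Cayley graph on $(\bZ/q\bZ)^n$ with connection set $S_{r,j}=\{\bfz:F(\bfz)\equiv j\}$ is an $(N,d,\lambda)$-graph with $N=q^n$, $d=(1+o(1))q^{n-1}$ and $\lambda\ll q^{n-1}p^{-(n-1)/2}$ (the eigenvalue bound coming directly from Lemma~\ref{lemma_fourier}), and then quotes a black-box path-counting theorem for such graphs (Theorem~\ref{paths}, taken from \cite{TP}) to conclude. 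For Cayley graphs the two arguments are the same computation in different languages: the nontrivial eigenvalues are exactly $q^n\widehat{1_{S_{r,j}}}(\bfm)$ for $\bfm\ne\bfo$, and the spectral path lemma is precisely the operator-norm bound underlying your ``iterated Cauchy--Schwarz''. Indeed, your block estimate is cleanest when phrased that way: writing $u(\bfa)=\widehat{1_E}(\bfa)\,1_{\bfa\ne\bfo}$, $D=\text{diag}(\widehat{1_{S_{r,j}}}(\bfa)1_{\bfa\ne\bfo})$, and $K$ the convolution by $\widehat{1_E}$ (which has $\|K\|_{\mathrm{op}}\le 1$ since on the physical side it is multiplication by $1_E$), a maximal block of length $m$ contributes $|\langle u, D(KD)^{m-1}u\rangle|\le \mu^m\|u\|_2^2$ with $\mu=\max_{\bfa\ne\bfo}|\widehat{1_{S_{r,j}}}(\bfa)|$, which is exactly your asserted saving $(\delta_E^{-1}p^{-(n-1)/2})^m$ relative to the main term. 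The paper's packaging is shorter; yours is self-contained and makes explicit that the only $r$-dependent input is Lemma~\ref{lemma_fourier}.
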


We note that this theorem improves the density of $p^{-\frac{n-1}{3}}$ due to Lichtin in a recent paper \cite{BL2}.

Our last result is on the distribution of pinned trees. We first need to introduce some notations.

Let $T$ be an arbitrary tree with $k+1$ vertices and $k$ edges. Assume that $V(T)=\{\bfv_1, \ldots, \bfv_{k+1}\}\subset E$, then the edge set of $T$ can be ordered as follows:
\[\mathcal{E}(T)=\{ (\bfv_{i_{1}}, \bfv_{i_{2}}), (\bfv_{i_3}, \bfv_{i_4}), \ldots, (\bfv_{i_{2k-1}}, \bfv_{i_{2k}})\},\]
where $i_1\le i_3\le \cdots\le i_{2k-1}$, and $i_{2s}< i_{2t}$ if both $s<t$ and $i_{2s-1}=i_{2t-1}$.

For such a tree $T$, the vector
\[\left(|\bfv_{i_1}-\bfv_{i_2}|, |\bfv_{i_3}-\bfv_{i_4}|, \ldots, |\bfv_{i_{2k-1}}-\bfv_{i_{2k}}|\right) \in (\mathbb{Z}/p^r\mathbb{Z})^k\]
is called the edge-length vector of $T$. Two trees $T$ and $T'$ are called distinct if the corresponding edge-length vectors are not the same. We also recall that two trees $T$ and $T'$ are isotropic if there exists a bijective map $\varphi$ from $V(T)$ to $V(T')$ such that the edges are preserved. For $\bfx\in V(T')$ and $\bfv\in V(T)$, we say $(T', \bfx)$ is isomorphic to $(T, \bfv)$ if $T$ is isomorphic to $T'$ under $\varphi$ and $\varphi(\bfx)=\bfv$.

\begin{theorem}\label{tree}
{\color{black}Let $F$ be a good polynomial, $p$ be an odd prime, and $r$ be a positive integer.} Let $E\subset (\mathbb{Z}/p^r\mathbb{Z})^n$, $k\ge 1$ be an integer, and $(T, \bfv)$ be a given tree with $k+1$ vertices and $k$ edges. If $\delta_E\gg_k p^{-\frac{n-1}{2}}$, then there exists $\bfx\in E$ such that the number of distinct trees $(T', \bfx)$ with vertices in $E$ and isotropic to $(T, \bfv)$ is {\color{black}$\gg p^{rk}$}.
\end{theorem}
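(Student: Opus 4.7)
The plan is to proceed by induction on the number of edges $k$, with the base case $k=1$ being a pinned distance estimate and the inductive step carried out by peeling a leaf. For $k=1$ the tree is a single edge and the theorem reduces to finding $\bfx\in E$ with $|\Delta_{n,r}(\{\bfx\},E)|\gg p^r$. Setting $d_\bfx(t):=\#\{\bfy\in E:F(\bfx-\bfy)\equiv t\,(\mod p^r)\}$ and $M(\bfx):=\sum_t d_\bfx(t)^2$, Cauchy--Schwarz yields $|\Delta_{n,r}(\{\bfx\},E)|\ge |E|^2/M(\bfx)$. The sum $\sum_{\bfx\in E}M(\bfx)$ counts the triples $(\bfx,\bfy_1,\bfy_2)\in E^3$ satisfying $F(\bfx-\bfy_1)\equiv F(\bfx-\bfy_2)\,(\mod p^r)$; expanding the indicator via the Fourier transform on $(\bZ/p^r\bZ)^n$ splits this count into a main term $|E|^3/p^r$ and an error whose relevant exponential sums are controlled by the same goodness conditions on $F$ that drive Theorem~\ref{main}. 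Under $\delta_E\gg p^{-(n-1)/2}$ the error is lower order, and pigeonholing over $\bfx$ produces a pin with $|\Delta_{n,r}(\{\bfx\},E)|\gg p^r$. The same averaging in fact yields the stronger \emph{robust} statement that a positive fraction of $\bfx\in E$ are \emph{good}, meaning $|\Delta_{n,r}(\{\bfx\},E)|\ge cp^r$ for some absolute $c>0$; denote this set by $G\subseteq E$.

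For the inductive step, assume Theorem~\ref{tree} holds for every pinned tree with $k-1$ edges. Given $(T,\bfv)$ with $k+1$ vertices, pick a leaf $\bfu\ne\bfv$, let $\bfw$ be its unique neighbor in $T$, and set $T_0:=T-\bfu$. Write $D_{T_0}^{G}(\bfx)$ for the number of distinct $T_0$-edge-length vectors realized by embeddings $\phi\colon V(T_0)\hookrightarrow E$ with $\phi(\bfv)=\bfx$ and $\phi(\bfw)\in G$. For each such distinct $T_0$-vector one may extend $\phi$ by any $\bfz\in E\setminus\phi(V(T_0))$, producing a new $T$-edge-length vector whose last coordinate is $F(\phi(\bfw)-\bfz)$; since $\phi(\bfw)\in G$ the set of possible values of that last coordinate has size $\ge cp^r$. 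As $T$-vectors with different $T_0$-parts are automatically distinct, one obtains $D_T(\bfx)\ge D_{T_0}^{G}(\bfx)\cdot(cp^r-k)$, and the problem reduces to finding $\bfx\in E$ with $D_{T_0}^{G}(\bfx)\gg p^{r(k-1)}$.

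The hard part will be accommodating the ``one vertex lands in $G$'' constraint in the induction, since the naive output of the inductive hypothesis for $T_0$ guarantees a pin with $\gg p^{r(k-1)}$ distinct $T_0$-vectors without any control on the location of $\phi(\bfw)$. I would therefore strengthen the inductive claim and prove, simultaneously, that for every dense subset $G\subseteq E$ with $|G|\ge c|E|$, for every pinned tree, and for every distinguished vertex of that tree, a positive fraction of pins $\bfx\in E$ witness $\gg p^{r(k-1)}$ distinct vectors realized by embeddings sending the distinguished vertex into $G$. The base case of this enhanced claim is again the pinned distance estimate — now invoked inside $G$, whose density $\delta_G\ge c\,\delta_E$ still satisfies $\gg p^{-(n-1)/2}$ — and the inductive step runs by the same leaf-peel together with the robust pinned distance bound supplying the required dense ``good'' set at each stage. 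This closes the induction and yields the desired pin $\bfx$, completing the proof.
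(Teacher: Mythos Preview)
Your overall architecture matches the paper's: first establish a pinned-distance result (this is the paper's Theorem~\ref{pin}, which gives a positive-proportion subset $E_1'\subset E_1$ of ``good'' pins), then run an induction by peeling a leaf exactly as in \cite{DM}. Your strengthened inductive hypothesis, tracking a distinguished vertex that must land in a dense ``good'' set, is precisely the mechanism that makes the leaf-peel work and is what the combinatorial argument of \cite{DM} does.

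Where you diverge from the paper is in how the pinned-distance step itself is proved. You propose to bound the isosceles-triangle count $\sum_{\bfx\in E}M(\bfx)$ directly via Fourier and the goodness conditions. The paper instead derives a point--sphere incidence bound (Theorem~\ref{thm-incidence}): summing the two-sided estimate in Theorem~\ref{main} over all radii $j\in U_r$ and applying Cauchy--Schwarz yields $I(\cP,\cS)\le |\cP||\cS|/q + Cq^{n-1/2}p^{-(n-1)/2}\sqrt{|\cP||\cS|}$; combining this upper bound with the trivial lower bound $I(E,\cS)\gg |E|^2$ (again from Theorem~\ref{main}) forces $\sum_{\bfx}|\Delta^*_{n,r,\bfx}(E)|\gg |E|q$. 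This route never touches the isosceles count and uses only the pointwise decay $|\widehat{1_{S_{r,j}}}(\bfm)|\ll q^{-1}p^{-(n-1)/2}$ for unit $j$, so it stays entirely inside the goodness package.

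Your isosceles approach can be made to work, but your sentence ``the error \dots\ is controlled by the same goodness conditions'' hides real work: Lemma~\ref{lemma_fourier} only controls $\widehat{1_{S_{r,j}}}$ for \emph{unit} $j$, so you must first restrict to $t\in U_r$ (and use Theorem~\ref{main} to argue that a positive proportion of pairs have unit distance), and then the error requires the spherical $L^2$ bound $\sum_{j\in U_r}|\widehat{1_{S_{r,j}}}(\bfm)|^2\ll q^{-1}p^{-(n-1)}$ together with the standard trick of splitting $d_\bfx(t)$ into its mean plus a fluctuation and enlarging the $\bfx$-sum to all of $(\bZ/p^r\bZ)^n$ only on the fluctuation. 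None of this is deep, but it is not immediate from Theorem~\ref{main} as written; the paper's incidence formulation is a cleaner packaging of the same Fourier input.
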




\section{Notations and Lemmas}
In this paper, the letter $p$ always denote a given prime {\color{black}greater than $2$}. The letter $n$ denotes the dimension of the space under consideration, and $r$ is always a positive integer. {\color{black}The letters $m,k,l,i,j$ always denote integers. The imaginary unit is denoted by $\i=\sqrt{-1}$.} The cardinality of a finite set $S$ is denoted by either $|S|$ or $\#S$. By $X\ll Y$, it means there exists some positive constant $C$ such that $X\le CY$. If $C=C(k)$, for some parameter $k$, then we write $X\ll_k Y$. 


When we write $\bfz\, (\mod p^r)$ for an $n$-dimensional vector $\bfz$, it always means that $\bfz$ is considered as an element in $(\bZ/p^r \bZ)^n$. The expression $\ord_p(z)=u$ means that $z\equiv 0\,(\mod p^u)$ and $z\not\equiv 0\,(\mod p^{u+1})$. We also denote $\ord_p(0) = r$ for $0\in \bZ/p^r\bZ$. When $\bfz=(z_1,z_2,\ldots,z_n)$, we write $v_\bfz=\min\nolimits_{1\leq i\leq n}\{\ord_p(z_i)\}$. Then the vector $\bfz$ can be expressed as $\bfz=p^{v_\bfz}\tilde{\bfz}$, where $\tilde{\bfz}$ is a vector in $(\bZ/p^{r-v_\bfz}\bZ)^n$ such that $v_{\tilde{\bfz}}=0$.

For any $j\in \bZ/p^r\bZ$, we use $C_{n,r,j}$ to denote the sphere in $(\bZ/p^r\bZ)^n$ centered at origin with radius $j$, i.e.,
\[
C_{n,r,j} = \{\bfz\in (\bZ/p^r\bZ)^n:\, \|\bfz\|=j\}.
\]
Similarly, for a given polynomial $F(x)$ in $n$ variables with coefficients in $\bZ$, we define
\[
S_{n,r,j} = \{\bfz\in (\bZ/p^r\bZ)^n:\, F(\bfz)=j\}.
\]
When it makes no confusion, we abbreviate $C_{n,r,j}$ (or $S_{n,r,j}$) as $C_{r,j}$ (or $S_{r,j}$, respectively).

The additive character modulo $p^r$ is denoted by $e_{p^r}(x) = e^{\frac{2\pi \i x}{p^r}}$, $(x\, \mod p^r)$.  For a function $f:\, (\bZ/p^r\bZ)^n\rightarrow \bC$, the Fourier transformation is defined by
\[
\widehat{f}(\bfm) = \frac{1}{p^{rn}} \sum\limits_{\bfx\,(\mod p^r)} f(\bfx)e_{p^r}(-\bfm\cdot \bfx),\quad \big(\bfm\in  (\bZ/p^r\bZ)^n\big),
\]
and the Fourier inverse is given by
\[
f(\bfx) = \sum\limits_{\bfm\,(\mod p^r)} \widehat{f}(\bfm)e_{p^r}(\bfm\cdot \bfx),\quad \big(\bfx\in  (\bZ/p^r\bZ)^n).
\]
The convolution of two functions $f_1$ and $f_2$ is defined by
\[
(f_1\ast f_2)(\bfx) = \frac{1}{p^{rn}}\sum\limits_{\bfy\,(\mod p^r)} f_1(\bfx-\bfy)f_2(\bfy).
\]
We have the property that $\widehat{f_1\ast f_2}=\widehat{f_1}\widehat{f_2}$. The Parseval's identity is
\[
\frac{1}{p^{rn}}\sum\limits_{\bfx\,(\mod p^r)}f_1(\bfx)\overline{f_2(\bfx)} = \sum\limits_{\bfm\,(\mod p^r)}\widehat{f_1}(\bfm)\overline{\widehat{f_2}(\bfm)}.
\]

The next lemma is one version of Hensel's lemma that will be regularly applied in this paper.

\begin{lemma}[Hensel's lemma] \label{lem_Hensel}
Let
\[
\mathbf{G}(\bfx) = \big(G_1(x_1,\ldots,x_n),\ldots,G_m(x_1,\ldots,x_n)\big)
\]
be a map from $\bZ^n$ to $\bZ^m$, with $G_i$ polynomials with integer coefficients. Let $l$ be a positive integer and $\bfy\in \bZ^n$. Suppose that $\mathbf{G}(\bfy)\equiv \bfo$ $(\mod p^l)$. Let $R$ be the rank of $J({\mathbf{G}})|_\bfy$ modulo $p$, where $J({\mathbf{G}})|_\bfy$ is the Jocobi matrix
\[
J({\mathbf{G}})|_\bfy=\begin{bmatrix}
\frac{\partial G_1}{\partial x_1}(\bfy) &\frac{\partial G_1}{\partial x_2}(\bfy) &\ldots &\frac{\partial G_1}{\partial x_n}(\bfy) \\
\frac{\partial G_2}{\partial x_1}(\bfy) &\frac{\partial G_2}{\partial x_2}(\bfy) &\ldots &\frac{\partial G_2}{\partial x_n}(\bfy) \\
\ldots &\ldots &\ldots &\ldots \\
\frac{\partial G_m}{\partial x_1}(\bfy) &\frac{\partial G_m}{\partial x_2}(\bfy) &\ldots &\frac{\partial G_m}{\partial x_n}(\bfy)
\end{bmatrix}.
\]
Then
\begin{equation} \label{eq0}
\#\big\{\bfz\,(\mod p^k):\, \mathbf{G}(\bfy+p^l\bfz)\equiv \mathbf{0}\,(\mod p^{l+k})\big\} \leq p^{k(n-R)}
\end{equation}
for any integer $k\geq 1$. When $R=m$, the ``$\leq$'' can be replaced by ``$=$''.
\end{lemma}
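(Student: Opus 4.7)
The plan is to Taylor-expand $\mathbf{G}$ around $\bfy$ in the direction $p^l\bfz$, reduce the problem to counting solutions of a linear congruence whose coefficient matrix is $J(\mathbf{G})|_\bfy$, and then lift digit-by-digit in the $p$-adic expansion of $\bfz$. Because each higher Taylor coefficient is automatically multiplied by a further power of $p^l$, at every step of the lifting the non-linear terms can be absorbed and one is left with a linear system over $\bF_p$ whose coefficient matrix has rank $R$.

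Concretely, I would first write
\[
\mathbf{G}(\bfy+p^l\bfz)=\mathbf{G}(\bfy)+p^l J(\mathbf{G})|_\bfy\,\bfz+p^{2l}\mathbf{H}(\bfz)
\]
for some polynomial vector $\mathbf{H}$ with integer coefficients (coming from the second and higher partial derivatives of the $G_i$'s, which exist since each $G_i\in\bZ[\bfx]$), use the hypothesis $\mathbf{G}(\bfy)=p^l\bfc$ to divide through by $p^l$, and so reduce the target congruence to
\[
\bfc+J(\mathbf{G})|_\bfy\,\bfz+p^l\mathbf{H}(\bfz)\equiv \bfo\,(\mod p^k).
\]
Then I would induct on $k\geq 1$. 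The base case $k=1$ is immediate: since $l\geq 1$, the term $p^l\mathbf{H}(\bfz)$ vanishes modulo $p$, and the congruence is a linear system of $m$ equations in $n$ unknowns over $\bF_p$ whose coefficient matrix has rank $R$, hence admits either $0$ or exactly $p^{n-R}$ solutions, with exactly $p^{n-R}$ arising whenever the system is consistent---which is automatic when $R=m$. For the inductive step I would fix a solution $\bfz_0\,(\mod p^{k-1})$ and parametrise its lifts as $\bfz=\bfz_0+p^{k-1}\bfw$ with $\bfw\,(\mod p)$. Taylor expansion at $\bfy+p^l\bfz_0$ gives
\[
\mathbf{G}(\bfy+p^l\bfz_0+p^{l+k-1}\bfw)=\mathbf{G}(\bfy+p^l\bfz_0)+p^{l+k-1}J(\mathbf{G})|_{\bfy+p^l\bfz_0}\bfw+O(p^{2(l+k-1)}),
\]
and since $l+k\geq 2$ the quadratic error absorbs modulo $p^{l+k}$; dividing by $p^{l+k-1}$ and using the congruence $J(\mathbf{G})|_{\bfy+p^l\bfz_0}\equiv J(\mathbf{G})|_\bfy\,(\mod p)$, the lift is determined by an affine linear congruence over $\bF_p$ whose homogeneous part again has rank $R$. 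Hence each partial solution extends in either $0$ or $p^{n-R}$ ways, always in $p^{n-R}$ ways when $R=m$, and multiplying over the $k$ lifting steps gives the claimed bound $p^{k(n-R)}$, with equality when $R=m$.

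I do not anticipate any real obstacle here---this is standard Hensel-style lifting. The only point requiring care is the bookkeeping of $p$-adic precision: at each step one must check that the non-linear Taylor terms genuinely sit at a strictly higher $p$-adic precision than the linear one. This is why the hypothesis $l\geq 1$ is used in the base case, and why in the inductive step one needs the inequality $2(l+k-1)\geq l+k$, which is automatic from $l,k\geq 1$. Once this is in hand, the counting is driven purely by the rank of the mod-$p$ reduction of the Jacobian, and the statement with equality in the full-rank case $R=m$ follows at once.
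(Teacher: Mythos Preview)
Your proposal is correct and matches the paper's own argument essentially line for line: linearize via Taylor expansion to reduce the $k=1$ case to a linear system over $\bF_p$ with coefficient matrix $J(\mathbf{G})|_\bfy$ of rank $R$, then lift one $p$-adic digit at a time, using that $J(\mathbf{G})|_{\bfy+p^l\bfz_0}\equiv J(\mathbf{G})|_\bfy\pmod p$ so that the rank is preserved at each step. The paper writes out the step $k=1\to k=2$ explicitly and then says ``by induction on $k$'', which is exactly your inductive scheme.
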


\begin{proof}
Firstly, let us consider the case $k=1$. Let $\mathbf{G}(\bfy)= p^l \bfb$, where $\bfb=(b_1,b_2,\ldots,b_m)\in \bZ^m$. Note that
\[
G_i(\bfy+p^l \bfz) \equiv G_i(\bfy)+ p^{l}(\nabla G_i)(\bfy)\cdot \bfz \quad (\mod p^{l+1})
\]
for any $i=1,2,\ldots,m$. Then $G_i(\bfy+p^l\bfz)\equiv 0\, (\mod p^{l+1})$ $(i=1,2,\ldots,m)$ if and only if $b_i+(\nabla G_i)(\bfy)\cdot \bfz \equiv 0\, (\mod p)$ $(i=1,2,\ldots,m)$, if and only if $J(\mathbf{G})|_\bfy\bfz \equiv - \bfb\, (\mod p)$. The number of solutions to such a system of linear equations does not exceed $p^{n-R}$. Thus,
\[
\#\big\{\bfz\,(\mod p):\, \mathbf{G}(\bfy+p^l\bfz)\equiv \bfo\,(\mod p^{l+1})\big\} \leq p^{n-R}.
\]
Moreover, the system is consistent when $R=m$, and has exactly $p^{n-m}$ solutions.

Secondly, let us consider the case $k=2$. For any $\bfz_1 (\mod p)$ with $ \mathbf{G}(\bfy+p^l\bfz_1)\equiv \bfo\,(\mod p^{l+1})$, we write $\bfy_1=\bfy+p^l\bfz_1$. Noting that $J(\mathbf{G})|_{\bfy_1}\equiv J(\mathbf{G})|_{\bfy}\,(\mod p)$, the rank of $J(\mathbf{G})|_{\bfy_1}$ modulo $p$ is also $R$. Hence
\[
\#\big\{\bfz_2\,(\mod p):\, \mathbf{G}(\bfy_1+p^{l+1}\bfz_2)\equiv \bfo\,(\mod p^{l+2})\big\} \leq p^{n-R}.
\]
It follows that
\[
\#\big\{(\bfz_1, \bfz_2)\,(\mod p):\, \mathbf{G}\big(\bfy+p^l(\bfz_1+p\bfz_2)\big)\equiv \bfo\,(\mod p^{l+2})\big\} \leq p^{2(n-R)}.
\]

Finally, the conclusion follows by an induction on $k$.
\end{proof}

In the following, we collect and prove some results involving exponential sums or cardinarlity of varieties.

\begin{lemma} [Weil's theorem, Theorem 5.38, \cite{Lidl}] \label{lem_Weil}
Let $f(x)\in \bF_p[x]$ be a polynomial of degree $k\geq 1$ with $(k,p)=1$. Then
\[
\Big|\sum\limits_{x\in \bF_p} e_p(f(x))\Big| \leq (k-1)p^{1/2}.
\]
\end{lemma}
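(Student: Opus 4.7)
The plan is to deduce the stated bound from the Riemann hypothesis for curves over $\bF_p$ applied to the Artin--Schreier cover $C$ defined by $y^p - y = f(x)$. This is the classical route taken in Lidl--Niederreiter. First I would verify that $C$ is geometrically smooth over $\bF_p$: the affine partial derivative $\partial_y(y^p - y - f(x)) = -1 \not\equiv 0 \pmod p$ handles the finite chart, and the coprimality hypothesis $(k,p)=1$ ensures the unique point at infinity is also smooth. A standard Riemann--Hurwitz computation on the degree-$p$ cover $C \to \bA^1_x$, whose only ramification is above infinity, yields the genus $g = (p-1)(k-1)/2$ of the projective closure.

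The next step is to relate point counts on $C$ to the exponential sum of interest. For any $n \geq 1$ and any $c \in \bF_{p^n}$, the Artin--Schreier equation $y^p - y = c$ has $p$ solutions in $\bF_{p^n}$ when $\mathrm{Tr}_{\bF_{p^n}/\bF_p}(c) = 0$ and none otherwise. Applying this with $c = f(x)$ and using the orthogonality
\[
\mathbf{1}_{\mathrm{Tr}(c) = 0} = \frac{1}{p}\sum_{b \in \bF_p} e_p\big(b\,\mathrm{Tr}(c)\big),
\]
I would obtain
\[
N_n := \#C(\bF_{p^n}) = p^n + \sum_{b \in \bF_p^*}\sum_{x \in \bF_{p^n}} e_p\!\big(\mathrm{Tr}_{\bF_{p^n}/\bF_p}(b f(x))\big).
\]
To isolate a single character and single field (our sum is $n=1$, $b=1$), I would package the data into an $L$-function: grouping by the additive character $\psi_b(y)=e_p(by)$, the $L$-polynomial of $C$ factors as $P(T) = \prod_{b \in \bF_p^*} L_b(T)$ with each $L_b$ of degree $k-1$ (so the degrees add to $2g$). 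The Hasse--Weil inequality $|N_n - p^n - 1| \leq 2g\,p^{n/2}$, applied at every $n$ simultaneously, forces each reciprocal root of each $L_b$ to have absolute value $\sqrt{p}$. Reading off the $T$-coefficient for $b=1$ then gives $|\sum_{x\in \bF_p} e_p(f(x))| \leq (k-1)\sqrt{p}$.

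The main obstacle is of course the Hasse--Weil inequality itself, which is the content of Weil's theorem on rational points of curves over finite fields. I would either invoke it as a black box, or, if a self-contained argument is desired, replace this step with Stepanov's elementary method: construct an auxiliary polynomial of carefully controlled degree that vanishes to high order at every $\bF_p$-point of $C$, and derive a contradiction from a degree count when $|N_1|$ is too large. The genus-to-degree bookkeeping in the $L$-function factorization, ensuring that the factor corresponding to the distinguished character has degree exactly $k-1$ (rather than the cruder $2g = (p-1)(k-1)$), is the other delicate point and is what makes the final constant $k-1$ independent of $p$.
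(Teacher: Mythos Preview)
Your outline is correct and follows the classical Artin--Schreier/Weil route, but note that the paper does not prove this lemma at all: it is quoted verbatim as Theorem~5.38 of Lidl--Niederreiter and used as a black box. There is therefore no proof in the paper to compare your argument against; your sketch simply supplies what the cited reference contains.
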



\begin{lemma} [Theorem 2.3, \cite{KS}]  \label{lem_solutions}
Let $F(\bfx)=\sum\nolimits_{i=1}^n a_ix_i^{k_i}$ be a polynomial in $\bF_p[\bfx]$ such that $n\geq 2$, $k_i\geq 1$, and $a_i\neq 0$ for all $1\leq i\leq n$. Let $j\in \bF_p^\ast$. Then
\begin{equation*} 
\#\{\bfx\in \bF_p :\, F(\bfx)= j\} = (1+\textit{o}_F(1)) p^{n-1}
\end{equation*}
as $p\rightarrow \infty$. 
\end{lemma}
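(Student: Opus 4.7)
The plan is to convert the count into an exponential sum via additive orthogonality, isolate the $s=0$ main term, exploit the diagonal structure of $F$ to factor the inner sum, and invoke Weil's bound (Lemma \ref{lem_Weil}) on each resulting one-variable exponential sum.

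Concretely, I would write
\[
\#\{\bfx \in \bF_p^n: F(\bfx) = j\} = \frac{1}{p}\sum_{s \in \bF_p}\sum_{\bfx \in \bF_p^n} e_p\bigl(s(F(\bfx) - j)\bigr),
\]
read off the main term $p^{n-1}$ from $s=0$, and factor the $s\neq 0$ contribution as $\prod_{i=1}^n \sum_{x_i\in\bF_p} e_p(sa_i x_i^{k_i})$. For $p$ large enough that $(k_i,p)=1$ for every $k_i\geq 2$, Lemma \ref{lem_Weil} applied to $f_i(x)=sa_i x^{k_i}$ bounds each factor by $(k_i-1)\sqrt{p}$; any factor with $k_i=1$ is identically zero when $s\neq 0$, so that degenerate case is trivial. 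Multiplying these bounds and summing over $s\neq 0$ yields total error $\ll_F p^{n/2}$.

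For $n\geq 3$ this is $o_F(p^{n-1})$ and we are done. The main obstacle is the case $n=2$: there the trivial bound $p^{n/2}=p$ matches the main term and extracts no cancellation. My proposal is to open up each one-variable sum via the Gauss-sum identity
\[
\sum_{x\in \bF_p} e_p(bx^k) = \sum_{\chi} \overline{\chi}(b)\,g(\chi) \qquad (b\not\equiv 0\bmod p),
\]
where $\chi$ runs over nontrivial multiplicative characters with $\chi^k$ trivial and $g(\chi)$ is the associated Gauss sum of modulus $\sqrt{p}$. After substituting and swapping summations, the outer $s$-sum reduces to $\sum_{s\neq 0}\overline{\chi_1\chi_2}(s)\,e_p(-sj)$, which equals $-1$ when $\chi_1\chi_2$ is trivial and a Gauss sum of modulus $\sqrt{p}$ otherwise. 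The trivial-product terms then contribute at most $O_F(p)$ and the nontrivial ones at most $O_F(p^{3/2})$ in total; dividing by $p$ and bounding the number of character pairs by $(k_1-1)(k_2-1)$ gives overall error $O_F(\sqrt{p})=o(p)$, closing the base case. An equivalent route for $n=2$ is to apply Weil's theorem for curves to the smooth Fermat-type curve $a_1 x^{k_1}+a_2 y^{k_2}=j$ (smoothness holds because $j\neq 0$ and $(k_i,p)=1$), whose genus is bounded in terms of $k_1,k_2$ alone, yielding directly the affine point count $p+O_F(\sqrt{p})$.
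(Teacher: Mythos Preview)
The paper does not supply its own proof of this lemma; it is quoted verbatim as Theorem~2.3 of \cite{KS} and used as a black box. So there is no in-paper argument to compare against.

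Your proposal is correct and is essentially the standard proof. The orthogonality-plus-factorisation step is exactly right, and for $n\geq 3$ the crude bound $O_F(p^{n/2})$ from Lemma~\ref{lem_Weil} on each factor already suffices. Your handling of the borderline case $n=2$ via the Gauss-sum expansion is also sound: the identity $\sum_{x}e_p(bx^k)=\sum_{\chi\neq\chi_0,\ \chi^k=\chi_0}\overline{\chi}(b)g(\chi)$ is correct for $b\not\equiv 0$, the $s$-sum collapses as you describe, and the character-pair count is bounded by $(k_1-1)(k_2-1)$, giving total error $O_F(\sqrt{p})$ after dividing by $p$. The alternative route through the Weil bound for the genus of the smooth affine curve $a_1x^{k_1}+a_2y^{k_2}=j$ is equally valid and arguably cleaner. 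Either way you recover the asymptotic $p^{n-1}(1+o_F(1))$ uniformly in $j\in\bF_p^\ast$.
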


Here $X=o_F(Y)$, it means $X/Y\rightarrow 0$ with the convergence rate depending on $F$. 


\begin{lemma} [Lemma 2.1, \cite{KS}] \label{lem_Fourier_diag_hom}
Let $F(\bfx)=\sum\nolimits_{i=1}^n a_ix_i^k$ be a polynomial in $\bF_p[\bfx]$ with $n\geq 1$, $k\geq 2$, and $a_i\neq 0$ for all $1\leq i\leq n$. Let $j\in \bF_p^\ast$. Then, for any $\bfm\in \bF_p^n \setminus\{\bfo\}$, one has
\[
\sum\limits_{\bfx\in \bF_p^n\atop F(\bfx)=j}e_p(-\bfm\cdot \bfx) \ll_{n,k} p^{\frac{n-1}{2}}
\]
when $p$ is sufficiently large.
\end{lemma}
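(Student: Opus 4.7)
The plan is to prove the lemma via Fourier analysis on $\bF_p$, pushing the level-set condition onto an auxiliary dual variable. First, I would use the orthogonality identity $\mathbf{1}_{F(\bfx)=j} = \tfrac{1}{p}\sum_{s\in\bF_p} e_p(s(F(\bfx)-j))$ to rewrite
\[
T := \sum_{\bfx \in \bF_p^n,\, F(\bfx)=j} e_p(-\bfm\cdot\bfx) \;=\; \frac{1}{p}\sum_{s\in\bF_p} e_p(-sj)\sum_{\bfx\in\bF_p^n} e_p\big(sF(\bfx)-\bfm\cdot\bfx\big).
\]
The $s=0$ contribution equals $\tfrac{1}{p}\sum_\bfx e_p(-\bfm\cdot\bfx) = 0$ since $\bfm\neq\bfo$, so only the $s\in\bF_p\setminus\{0\}$ terms survive.

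Next, I would exploit the diagonal form $F(\bfx)=\sum_i a_ix_i^k$ to factorize the inner sum as
\[
\sum_\bfx e_p\big(sF(\bfx)-\bfm\cdot\bfx\big) \;=\; \prod_{i=1}^n A_i(s),\qquad A_i(s) := \sum_{x\in\bF_p} e_p(sa_ix^k-m_ix).
\]
For $s\neq 0$ and $p>k$, each $A_i(s)$ is a Weil exponential sum of a polynomial of degree $k$ coprime to $p$ with nonvanishing leading coefficient, so \cref{lem_Weil} gives $|A_i(s)|\le (k-1)\sqrt{p}$. Substituting yields only a preliminary bound $|T|\le (k-1)^n p^{n/2}$, a factor of $\sqrt{p}$ short of the goal.

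The main obstacle is to extract this missing $\sqrt{p}$ of cancellation from the summation over $s$. My approach is to develop an explicit multiplicative-character expansion for each $A_i(s)$. When $m_i=0$, write $A_i(s) = \sum_{\chi^k=\chi_0,\,\chi\neq\chi_0} g(\chi)\,\overline{\chi}(sa_i)$ (the Gauss-sum expansion for the Fermat curve); when $m_i\neq 0$, use the companion formula that expresses $A_i(s)$ as a short combination of terms of the form $g(\chi)\,\overline{\chi}(s)\,e_p(\alpha_\chi/s)$, where the additive Kloosterman-type phase $\alpha_\chi/s$ arises from the critical-point analysis of $sa_ix^k - m_ix$. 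Multiplying across $i$, the product $\prod_i A_i(s)$ becomes a bounded number ($O(k^n)$) of terms, each of the shape $\chi(s)\,e_p(\alpha/s)$ weighted by a product of $n$ Gauss sums of modulus $\sqrt{p}$ each, contributing $p^{n/2}$. Integrating against $e_p(-sj)$ over $s\in\bF_p^\ast$ then reduces each term to a classical hyper-Kloosterman sum in a single variable, bounded by $O(\sqrt{p})$ via Weil's theorem on curves. Summing the bounded number of such contributions and dividing by $p$ yields $|T|\ll_{n,k} p^{(n-1)/2}$.

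As a more conceptual alternative, one can invoke Deligne's bound on exponential sums along smooth affine hypersurfaces applied to $V = \{F=j\}\subset \bA^n$: the gradient $\nabla F=(ka_ix_i^{k-1})$ never vanishes on $V$ (since $F(\bfo)=0\neq j$ and $p\nmid k$), the projective part at infinity is the smooth Fermat hypersurface $\sum a_ix_i^k=0$ in $\bP^{n-1}$, and the linear phase $-\bfm\cdot\bfx$ is non-constant on $V$; these smoothness hypotheses feed the required purity statement and directly give $|T|\le C_{n,k}\,p^{(n-1)/2}$ without having to track Gauss sums by hand. The delicate step in either route is the same in spirit, namely verifying that no ``exceptional'' arithmetic collision wipes out the expected square-root cancellation against $e_p(-sj)$.
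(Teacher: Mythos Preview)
The paper does not supply its own proof of this lemma; it is quoted directly from \cite{KS} as a black box. So there is nothing to compare against except the correctness of your argument itself.

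Your setup is sound: the orthogonality trick, the vanishing of the $s=0$ term, the factorisation $\prod_i A_i(s)$, and the observation that termwise Weil only yields $p^{n/2}$ are all correct. The Deligne route you sketch at the end is valid and is almost certainly how \cite{KS} (or the sources it cites) actually argues: for $p>k$ and $j\ne 0$ the affine hypersurface $\{F=j\}$ is smooth, its projective closure and hyperplane section at infinity are the smooth Fermat hypersurface, and the linear phase is nontrivial, so the Deligne--Katz purity bound gives $|T|\ll_{n,k} p^{(n-1)/2}$ directly.

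Your first, ``explicit'' route has a genuine gap. The companion formula you posit for $A_i(s)=\sum_x e_p(sa_ix^k-m_ix)$ with $m_i\ne 0$, namely an expansion into finitely many terms of shape $g(\chi)\,\overline{\chi}(s)\,e_p(\alpha_\chi/s)$, simply does not exist for $k\geq 3$. That exact shape is special to $k=2$, where completing the square produces the Kloosterman phase $e_p(-m_i^2/(4a_is))$; for higher $k$ the stationary-phase picture you invoke is only a heuristic, not a finite-field identity, and no closed form of that type is available. Hence the subsequent reduction to one-variable hyper-Kloosterman sums does not go through. If you try instead to expand only the $m_i=0$ factors via Gauss sums and swap the $s$-sum back inside, you land on a mixed sum $\sum_{\bfx_{I_1}} e_p(-\bfm_{I_1}\cdot\bfx_{I_1})\,\overline{\psi}\bigl(F_{I_1}(\bfx_{I_1})-j\bigr)$ over the coordinates with $m_i\ne 0$; bounding this by $p^{|I_1|/2}$ is itself a Deligne-type estimate, so the detour gains nothing in elementarity. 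In short: drop the first route and keep the Deligne argument.
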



\begin{lemma} \label{lemma_fourier}
Let $p$ be a prime and $j$ be an integer with $(j,p)=1$. Assume that $F(\bfx)$ is a polynomial in $\bZ[\bfx]$ in $n\geq 1$ variables satisfying the conditions (\romannumeral1) and (\romannumeral3) in Theorem \ref{main}, and the following:

(\romannumeral4'):  When $\bfm \not\equiv \bfo\, (\mod p)$,
\begin{align*}
&\#\big\lbrace\bfy(\mod p):\, F(\bfy)\equiv j (\mod p),\, \exists ~1\leq t\leq n,\,\, s.t., F_t(\bfy)\not\equiv 0(\mod p), \\
&\qquad\qquad m_t\not\equiv 0(\mod p), m_i F_t(\bfy)\equiv m_t F_i(\bfy) \,(\mod p)\, \mbox{for all}~1\leq i\leq n\big\rbrace\leq {c_3} p^{\kappa},
\end{align*}
for some $c_3>0$ and $0\leq \kappa\leq \frac{n-1}{2}$.

Then, for any $r\geq 1$, we have
\[
|\widehat{1_{S_{r,j}}}(\bfm')|\leq
\begin{cases}
c_2\, p^{-r-\frac{n-1}{2}},\quad &\text{if }\bfm' \equiv \bfo\,(\mod p^{r-1}) \text{ but }\bfm' \not\equiv \bfo\,(\mod p^{r}),\\
c_3\, p^{-r-n+1+\kappa},  &\text{if }\bfm' \not\equiv \bfo\,(\mod p^{r-1}).
\end{cases}
\]
\end{lemma}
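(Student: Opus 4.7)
The plan is to stratify $S_{r,j}$ by its reduction modulo $p$ and analyze the inner sum over each fibre. For every $\bfy\in\bF_p^n$ with $F(\bfy)\equiv j\pmod p$, condition (i) supplies some $t$ with $F_t(\bfy)\not\equiv 0\pmod p$, and \Cref{lem_Hensel} applied to $F-j$ shows that
\[
\Omega_\bfy := \bigl\{\bfw\in(\bZ/p^{r-1}\bZ)^n:\,F(\bfy+p\bfw)\equiv j\pmod{p^r}\bigr\}
\]
has cardinality $p^{(r-1)(n-1)}$, parameterized bijectively by $(w_i)_{i\neq t}\in(\bZ/p^{r-1}\bZ)^{n-1}$ with $w_t$ determined implicitly. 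Writing $\bfx=\bfy+p\bfw$ yields
\[
\widehat{1_{S_{r,j}}}(\bfm')=\frac{1}{p^{rn}}\sum_{\bfy:\,F(\bfy)\equiv j\,(\mathrm{mod}\,p)} e_{p^r}(-\bfm'\cdot\bfy)\,\Sigma_\bfy,\qquad \Sigma_\bfy:=\sum_{\bfw\in\Omega_\bfy}e_{p^{r-1}}(-\bfm'\cdot\bfw).
\]

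In Case 1, $\bfm'=p^{r-1}\bfm$ with $(\bfm,p)=1$, so $e_{p^{r-1}}(-\bfm'\cdot\bfw)=1$ and $\Sigma_\bfy=p^{(r-1)(n-1)}$; the outer phase collapses to $e_p(-\bfm\cdot\bfy)$, and condition (iii) immediately delivers the bound $c_2p^{-r-(n-1)/2}$.

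In Case 2, I write $\bfm'=p^v\bfm$ with $0\le v\le r-2$ and $(\bfm,p)=1$. The key claim is that $\Sigma_\bfy=0$ whenever $\bfy$ fails condition (iv'). To prove it I consider, for each $\mathbf{e}\in(\bZ/p\bZ)^{n-1}$ (indexed by $i\neq t$), the bijection of $\Omega_\bfy$ induced by the parameter shift $(w_i)_{i\neq t}\mapsto(w_i+p^{r-2-v}e_i)_{i\neq t}$, with $w_t$ receiving the compensating increment $\delta=\delta(\bfw,\mathbf{e})$ needed to preserve the defining equation. Iteratively solving $F(\bfy+p\bfw+p\Delta)\equiv F(\bfy+p\bfw)\pmod{p^r}$ for $\delta$ produces the leading approximation $\delta\equiv -p^{r-2-v}F_t(\bfy)^{-1}\sum_{i\neq t}F_i(\bfy)e_i$, and a size count (critically using $v\le r-2$) shows that all higher-order corrections from the Taylor series of $F$ and from the Neumann expansion of $F_t(\bfy+p\bfw)^{-1}$ contribute multiples of $p^{r-1}$ to $-\bfm'\cdot(\sigma(\bfw)-\bfw)$. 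Hence the induced phase change is the $\bfw$-independent quantity
\[
p^{r-2}\,F_t(\bfy)^{-1}\sum_{i\neq t}\bigl(m_tF_i(\bfy)-m_iF_t(\bfy)\bigr)e_i\pmod{p^{r-1}}.
\]
If $\bfy$ fails (iv') then for the chosen $t$ there is an $i_0\neq t$ with $m_tF_{i_0}(\bfy)-m_{i_0}F_t(\bfy)\not\equiv 0\pmod p$ (either directly from the failure of the bracketed condition when $m_t\not\equiv 0$, or by choosing $i_0$ with $m_{i_0}\not\equiv 0$ when $m_t\equiv 0$, using $\bfm\not\equiv\bfo\pmod p$ and that $F_t(\bfy)$ is a unit); supporting $\mathbf{e}$ on $i_0$ then makes the phase change a primitive $p$th root of unity, so $\Sigma_\bfy=\zeta\Sigma_\bfy$ with $\zeta\neq 1$ and hence $\Sigma_\bfy=0$. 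Combined with the trivial bound $|\Sigma_\bfy|\le p^{(r-1)(n-1)}$ for the at most $c_3p^\kappa$ bad $\bfy$ furnished by (iv'), this yields $|\widehat{1_{S_{r,j}}}(\bfm')|\le c_3p^{-r-n+1+\kappa}$.

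The hard part will be the Taylor bookkeeping verifying that only the linear approximation to $\delta$ survives modulo $p^{r-1}$ after multiplication by $-p^vm_t$; this requires tracking every correction of order $\ge p^{r-1-v}$ in $\delta$ and exploiting $v\le r-2$ throughout.
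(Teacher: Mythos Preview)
Your proposal is correct, but it takes a genuinely different route from the paper. The paper performs a \emph{two-stage} reduction: first it writes $\bfm'=p^\nu\bfm$ with $\bfm\not\equiv\bfo\pmod p$ and collapses the sum from $p^r$ to $p^\gamma$ (with $\gamma=r-\nu$) using Hensel to count fibres; then it peels off the \emph{top} layer $\bfx=\bfy+p^{\gamma-1}\bfz$. Because $2(\gamma-1)\ge\gamma$, there are no higher-order Taylor terms at all: the constraint on $\bfz$ is exactly linear, and the inner sum over $\bfz\pmod p$ is a linear character sum that equals $p^{n-1}$ or $0$ according to whether $m_iF_t(\bfy)\equiv m_tF_i(\bfy)$ for all $i$. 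A further Hensel step bounds $|V_{\gamma-1}(\bfm)|$ by $p^{(\gamma-2)(n-1)}|V_1(\bfm)|$, and (iv') finishes. Your approach instead peels off the \emph{bottom} layer $\bfx=\bfy+p\bfw$ in one stroke and proves exact vanishing of $\Sigma_\bfy$ via a translation symmetry of the (nonlinear) fibre $\Omega_\bfy$. This is elegant and arrives at the identical ``bad set'' condition, but the price is the Taylor bookkeeping you flag: you must show that every correction to $\delta$ beyond the linear approximation contributes $\ge p^{r-1-v}$ and hence vanishes after multiplication by $p^vm_t$ modulo $p^{r-1}$. Your valuation count is right (the quadratic-and-higher Taylor terms contribute $O(p^{2r-3-2v})$ to $\delta$, hence $O(p^{2r-3-v})$ to $p^vm_t\delta$, and $2r-3-v\ge r-1$ exactly when $v\le r-2$), and the reduction of $F_t(\bfy+p\bfw)^{-1}F_i(\bfy+p\bfw)$ to $F_t(\bfy)^{-1}F_i(\bfy)$ modulo $p$ is exactly what makes the phase change $\bfw$-independent. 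So both proofs work; the paper's top-layer split simply sidesteps the error analysis you have to perform, at the cost of one extra reduction step.
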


\begin{proof}



Recall that $S_{r, j}=\{\bfx\in (\mathbb{Z}/p^r\mathbb{Z})^n\colon F(\bfx)\equiv j\, (\mod p^r)\}$.
For simplicity, we denote $v_{\bfm'}=\nu$. Then $\bfm'=p^\nu\bfm$ with $\bfm\in (\bZ/p^{r-\nu}\bZ)^n$ and $v_\bfm=0$.

When $r=1$, the conclusion follows from the condition (\romannumeral3). In the following, we assume that $r\geq 2$.

For $1\leq \nu\leq r-1$, by a change of variables $\bfx=\bfy+p^{r-\nu}\bfz$, we have that
\begin{align*}
\widehat{1_{S_{r,j}}}(\bfm') = &\frac{1}{p^{rn}}\sum\limits_{\bfx (\mod p^{r})\atop F(\bfx)\equiv j (\mod p^{r})} e_{p^{r}}(-\bfm'\cdot \bfx)\\
& = \frac{1}{p^{rn}}\sum\limits_{\bfy (\mod p^{r-\nu})\atop F(\bfy)\equiv j (\mod p^{r-\nu})} \sum\limits_{\bfz  (\mod p^\nu)\atop F(\bfy+p^{r-\nu}\bfz)\equiv j (\mod p^{r})} e_{p^{r}}\big(-(p^\nu\bfm)\cdot (\bfy+p^{r-\nu}\bfz)\big)\\
& = \frac{1}{p^{rn}}\sum\limits_{\bfy (\mod p^{r-\nu})\atop F(\bfy)\equiv j (\mod p^{r-\nu})}  e_{p^{r-\nu}}\big(-\bfm\cdot \bfy)\sum\limits_{\bfz  (\mod p^\nu)\atop F(\bfy+p^{r-\nu}\bfz)\equiv j (\mod p^{r})} 1.
\end{align*}

Since $F(\bfy)\equiv j\, (\mod p)$, one has $(\nabla F)(\bfy)\not\equiv \bfo\, (\mod p)$ by the condition (\romannumeral1). By Hensel's lemma, we have
\[
\sum\limits_{\bfz  (\mod p^\nu)\atop F(\bfy+p^{r-\nu}\bfz)\equiv j (\mod p^{r})} 1 = p^{\nu(n-1)}
\]
for each given $\bfy (\mod p^{r-\nu})$. Therefore,
\begin{equation}\label{eq:2}
\widehat{1_{S_{r,j}}}(\bfm') = p^{-rn+\nu(n-1)} \sum\limits_{\bfy (\mod p^{r-\nu})\atop F(\bfy)\equiv j (\mod p^{r-\nu})}e_{p^{r-\nu}}\big(-\bfm\cdot \bfy)
\end{equation}
for $1\leq \nu\leq r-1$. Moreover, when $\nu=0$, the expression (\ref{eq:2}) trivially holds.

Let us substitute the parameters $\bfx$, $\gamma$ for $\bfy$, $r-\nu$, respectively. {\color{black}One has $1\leq \gamma\leq r$.} Then, it is sufficient to show that
\[
T_{\gamma}(\bfm):= \sum\limits_{\bfx (\mod p^{\gamma})\atop F(\bfx)\equiv j (\mod p^{\gamma})}e_{p^{\gamma}}\big(-\bfm\cdot \bfx)\leq
\begin{cases}
c_2\, p^{(\gamma-1/2)(n-1)},\quad &\text{if }{\color{black}\gamma=1},\\
c_3\, p^{(\gamma-1)(n-1)+\kappa},  &\text{if }{\color{black}2\leq \gamma\leq r}.
\end{cases}
\]
Here {\color{black}$v_\bfm=0$, i.e.,} $\bfm\not\equiv \bfo\,(\mod p)$.

When $\gamma=1$, the bound follows from the condition (\romannumeral3). In the following, we consider the situation that $\gamma\geq 2$. By a change of variables $\bfx = \bfy + p^{\gamma-1}\bfz$,
\begin{align*}
T_{\gamma}(\bfm) &= \sum\limits_{\bfy (\mod p^{\gamma-1})\atop F(\bfy)\equiv j (\mod p^{\gamma-1})}\sum\limits_{\bfz (\mod p)\atop F(\bfy+p^{\gamma-1}\bfz)\equiv j (\mod p^{\gamma})}e_{p^{\gamma}}\big(-\bfm\cdot (\bfy+p^{\gamma-1}\bfz))\\
&= \sum\limits_{\bfy (\mod p^{\gamma-1})\atop F(\bfy)\equiv j (\mod p^{\gamma-1})}e_{p^{\gamma}}\big(-\bfm\cdot \bfy)\sum\limits_{\bfz (\mod p)\atop F(\bfy+p^{\gamma-1}\bfz)\equiv j (\mod p^{\gamma})}e_p\big(-\bfm\cdot \bfz).
\end{align*}

Let us denote
\[
T_{\gamma}(\bfm;\bfy) := \sum\limits_{\bfz (\mod p)\atop F(\bfy+p^{\gamma-1}\bfz)\equiv j (\mod p^{\gamma})}e_p\big(-\bfm\cdot \bfz).
\]
The condition $F(\bfy+p^{\gamma-1}\bfz)\equiv j (\mod p^{\gamma})$ is equivalent to
\[
F(\bfy) + p^{\gamma-1}(\nabla F)(\bfy)\cdot \bfz \equiv j \,(\mod p^{\gamma}).
\]
Suppose that $F(\bfy)\equiv j+p^{\gamma-1} h_{\gamma}(\bfy)$ $(\mod p^{\gamma})$ for some $h_{\gamma}(\bfy)$ $(\mod p)$. {\color{black}Since $\gamma\geq 2$ and $F(\bfy)\equiv j\,(\mod p^{\gamma-1})$, one has $(\nabla F)(\bfy)\not\equiv \bfo \,(\mod p)$ by the condition (\romannumeral1).} Recall that $F_i(\bfy)=\frac{\partial F}{\partial x_i}(\bfy)$.
Without loss of generality, let us proceed with $F_1(\bfy)\not\equiv 0 (\mod p)$. Then
\[
z_1 \equiv -F_1^{-1}(\bfy)( h_{\gamma}(\bfy)+F_2(\bfy)z_2+\ldots+F_{n}(\bfy)z_{n}) \quad (\mod p).
\]
Here $F_1^{-1}(\bfy)$ is the inverse of $F_1(\bfy)$ $(\mod p)$. So
\[
T_{\gamma}(\bfm;\bfy)= e_p\big(m_1F_1^{-1}(\bfy) h_{\gamma}(\bfy)) \prod_{i=2}^{n} \sum\limits_{z_i (\mod p)} e_p\Big(\big(-m_i+m_1 F_1^{-1}(\bfy)F_i(\bfy)\big) z_i\Big).
\]
It follows that
\[
|T_{\gamma}(\bfm;\bfy)| =
\begin{cases}
p^{n-1},\quad &\text{if }m_iF_1(\bfy) \equiv m_1F_i(\bfy)\,\, (\mod p) \text{ for }1\leq i\leq n,\\
0,\quad &\text{otherwise}.
\end{cases}
\]
(Note that, for $i=1$, the formula $m_iF_1(\bfy) \equiv m_1F_i(\bfy)\,\, (\mod p)$ trivially holds.)

Moreover, if $m_1\equiv 0$ $(\mod p)$, then the assumption $\bfm\not\equiv \bfo\,(\mod p)$ shows that $m_{i_0}\not\equiv 0 \,(\mod p)$ for some $2\leq i_0\leq n$. Then $m_{i_0} F_1(\bfy) \not\equiv m_1 F_{i_0}(\bfy)\, (\mod p)$, which leads to $T_{\gamma}(\bfm;\bfy)=0$. As a result, if $T(\bfm;\bfy)\neq 0$ and $F_t(\bfy)\not\equiv 0$ $(\mod p)$ for some $1\leq t\leq n$, then $m_t\not\equiv 0$ $(\mod p)$.

Now we conclude that
\[
|T_{\gamma}(\bfm)| \leq p^{n-1}|V_{\gamma-1}(\bfm)|,
\]
where
\begin{align*}
V_{\mu}(\bfm):=&\big\{\bfy(\mod p^\mu):\, F(\bfy)\equiv j (\mod p^\mu),\, \exists~ 1\leq t\leq n,\,\, s.t., F_t(\bfy)\not\equiv 0(\mod p), \\
&\qquad\qquad m_t\not\equiv 0(\mod p), m_i F_t(\bfy)\equiv m_t F_i(\bfy) (\mod p)\,\mbox{for all}~ 1\leq i\leq n\big\}.
\end{align*}
When $\gamma\geq 3$, we further have
\begin{align*}
|V_{\gamma-1}(\bfm)| \leq & \sum\limits_{\bfw\in V_1(\bfm)}\#\{\bfy(\mod p^{\gamma-1}):\, F(\bfy)\equiv j(\mod p^{\gamma-1}),\, \bfy\equiv \bfw\, (\mod p)\}.
\end{align*}
Note that $F(\bfw)\equiv j\,(\mod p)$ for any $\bfw\in V_1(\bfm)$. So $(\nabla F)(\bfw)\not\equiv 0\,(\mod p)$ by the condition (\romannumeral1). Applying Hensel's lemma again, we have
\begin{align*}
&\#\{\bfy(\mod p^{\gamma-1}):\, F(\bfy)\equiv j(\mod p^{\gamma-1}),\, \bfy\equiv \bfw\, (\mod p)\}\\
&= \#\{\bfz\,(\mod p^{\gamma-2}):\, F(\bfw+p\bfz)\equiv j\,(\mod p^{\gamma-1})\}= p^{(\gamma-2)(n-1)}
\end{align*}
for each given $\bfw$. {\color{black}So $|V_{r-1}(\bfm)|\leq p^{(\gamma-2)(n-1)}|V_1(\bfm)|$.} It follows that, for any $\gamma\geq 2$,
\[
|T_{\gamma}(\bfm)| \leq p^{(\gamma-1)(n-1)}|V_1(\bfm)|.
\]
Recalling that $\bfm\not\equiv \bfo\,(\mod p)$, one can apply the condition (\romannumeral4') to get $|V_1(\bfm)|\le c_3p^{\kappa}$. Thus, we conclude that $|T_{\gamma}(\bfm)| \leq c_3 p^{(\gamma-1)(n-1)+\kappa}$. The proof is completed.
\end{proof}

{\color{black}
\begin{remark}
The definition of $V_\mu(\bfm)$ can be replaced by the collection of elements $\bfy\,(\mod p^\mu)$ such that $F(\bfy)\equiv j\, (\mod p)$, and, for all $1\leq t\leq n$ with $F_t(\bfy)\not\equiv 0\,(\mod p)$, it satisfies that $m_t\not\equiv 0\,(\mod p)$ and $m_i F_t(\bfy)\equiv m_t F_i(\bfy) (\mod p)$ for all $1\leq i\leq n$. So, the set in (\romannumeral4) in Theorem \ref{main} may be replaced by a smaller set.
\end{remark}
}

{\color{black}
\begin{remark}When $n=1$, the cardinality of the set in condition (\romannumeral4') becomes
\[
\#\{y\,(\mod p):\, F(y)\equiv j\,(\mod p)\},
\]
which is no larger than the degree of $F(y)$. So one may take $\kappa =0$ in this case.
\end{remark}
}


\begin{lemma} \label{lem_Fourier2}
Let $\kappa$ be a number with $0\leq \kappa\leq n/2$ and $c_1,c_2>0$ be constants. Let $F(\bfx)\in \bZ[\bfx]$ be a polynomial in $n\geq 1$ variables of degree $\geq 2$ such that the upper bounds
\begin{equation} \label{eq_cond0}
\sum\limits_{\bfx\, (\mod p)}e_{p}\left(sF(\bfx)+\bfm\cdot \bfx\right)\leq c_1 p^{n/2}
\end{equation}
and
\begin{equation} \label{eq_cond}
\#\{\bfx\,(\mod p):\,s\,(\nabla F)(\bfx) +\bfm\equiv \bfo\, (\mod p) \} \leq c_2 p^{\kappa}
\end{equation}
both hold when $(\bfm,s)\not\equiv (\bfo,0) \,(\mod p)$. Then, for any $r\geq 1$, we have
\begin{align*}
    &\sum\limits_{\bfx\, (\mod p^{r})}e_{p^{r}}\left(s' F(\bfx)+\bfm'\cdot \bfx\right)\\
    &\le \begin{cases}
c_1 p^{(r-1/2)n},\quad &\text{if }(\bfm',\, s')\equiv (\bfo,0)\, (\mod p^{r-1}) \text{ but } (\bfm',\, s')\not\equiv (\bfo,0)\, (\mod p^{r}),\\
c_2 p^{(r-1)n+\kappa}, &\text{if } (\bfm',\, s')\not\equiv (\bfo,0)\, (\mod p^{r-1}).
\end{cases}
\end{align*}
\end{lemma}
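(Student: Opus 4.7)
The plan is to reduce the full sum modulo $p^r$ to a sum modulo $p^\gamma$ for a smaller $\gamma$, mirroring the strategy used in the proof of Lemma \ref{lemma_fourier}. Write $\nu := v_{(\bfm',s')}$ and $(\bfm',s') = p^\nu (\bfm,s)$ with $(\bfm,s) \not\equiv (\bfo,0)\,(\mod p)$, and set $\gamma = r - \nu$. Then
\[
e_{p^r}\bigl(s'F(\bfx)+\bfm'\cdot\bfx\bigr) = e_{p^\gamma}\bigl(sF(\bfx)+\bfm\cdot\bfx\bigr)
\]
depends only on $\bfx$ modulo $p^\gamma$, so the sum over $\bfx\,(\mod p^r)$ equals $p^{\nu n}$ times
\[
T_\gamma(\bfm,s) := \sum_{\bfx\,(\mod p^\gamma)}e_{p^\gamma}\bigl(sF(\bfx)+\bfm\cdot\bfx\bigr).
\]
After multiplication by $p^{\nu n}$, the two target bounds reduce to showing $|T_1(\bfm,s)| \leq c_1 p^{n/2}$ (corresponding to $\nu = r-1$) and $|T_\gamma(\bfm,s)| \leq c_2 p^{(\gamma-1)n+\kappa}$ for all $\gamma \geq 2$ (corresponding to $\nu \leq r-2$), in both cases with $(\bfm,s)\not\equiv(\bfo,0)\,(\mod p)$.

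The first bound is precisely hypothesis \eqref{eq_cond0}. For the second, I would substitute $\bfx = \bfy + p^{\gamma-1}\bfz$ with $\bfy\,(\mod p^{\gamma-1})$ and $\bfz\,(\mod p)$. Every monomial in the expansion of $F(\bfy+p^{\gamma-1}\bfz)-F(\bfy)-p^{\gamma-1}(\nabla F)(\bfy)\cdot\bfz$ has total degree $k\geq 2$ in $\bfz$ and hence carries a factor of at least $p^{k(\gamma-1)}$; since $k(\gamma-1)\geq 2(\gamma-1)\geq \gamma$ when $\gamma\geq 2$, each such term vanishes modulo $p^\gamma$. Consequently
\[
F(\bfy+p^{\gamma-1}\bfz)\equiv F(\bfy)+p^{\gamma-1}(\nabla F)(\bfy)\cdot\bfz\quad(\mod p^\gamma),
\]
while the inner-product term splits linearly as $\bfm\cdot\bfx=\bfm\cdot\bfy+p^{\gamma-1}\bfm\cdot\bfz$. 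This factors $T_\gamma$ as
\[
T_\gamma(\bfm,s)=\sum_{\bfy\,(\mod p^{\gamma-1})}e_{p^\gamma}\bigl(sF(\bfy)+\bfm\cdot\bfy\bigr)\sum_{\bfz\,(\mod p)}e_p\bigl((s(\nabla F)(\bfy)+\bfm)\cdot\bfz\bigr),
\]
and orthogonality collapses the inner sum to $p^n$ when $s(\nabla F)(\bfy)+\bfm\equiv \bfo\,(\mod p)$ and to $0$ otherwise. The triangle inequality then gives
\[
|T_\gamma(\bfm,s)|\leq p^n\cdot\#\bigl\{\bfy\,(\mod p^{\gamma-1}):\,s(\nabla F)(\bfy)+\bfm\equiv \bfo\,(\mod p)\bigr\}.
\]
Stratifying by residue mod $p$, this count is at most $p^{(\gamma-2)n}$ times the analogous count mod $p$, and hypothesis \eqref{eq_cond} bounds the latter by $c_2 p^\kappa$. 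Multiplying the factors yields $|T_\gamma(\bfm,s)|\leq c_2 p^{(\gamma-1)n+\kappa}$, which together with the first bound completes the reduction.

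No genuine obstacle is anticipated: this is a clean Taylor-plus-orthogonality manoeuvre, and the two hypotheses \eqref{eq_cond0} and \eqref{eq_cond} are tailored precisely to supply the base case $\gamma=1$ and the Jacobian count for $\gamma\geq 2$. The only point worth emphasizing is that, because $F\in\bZ[\bfx]$, its Taylor truncation is itself an integer-coefficient polynomial, so no division by $p$ arises, and the argument never needs to invoke second-order (Hessian) information—consistent with the paper's stated methodological principle. The care required is purely bookkeeping among $\nu$, $\gamma$, and $r$ so that the exponents $(r-1/2)n$ and $(r-1)n+\kappa$ match exactly after restoring the factor $p^{\nu n}$.
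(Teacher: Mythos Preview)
Your proposal is correct and follows essentially the same route as the paper's proof: both reduce via $(\bfm',s')=p^\nu(\bfm,s)$ to a sum $T_\gamma$ (the paper calls it $G_\gamma$) modulo $p^\gamma$ with $\gamma=r-\nu$, handle $\gamma=1$ by hypothesis \eqref{eq_cond0}, and for $\gamma\geq 2$ split $\bfx=\bfy+p^{\gamma-1}\bfz$, Taylor-expand to first order, apply orthogonality in $\bfz$, and bound the resulting Jacobian count by stratifying over residues mod $p$ and invoking \eqref{eq_cond}. The only cosmetic difference is that you observe the periodicity of the summand mod $p^\gamma$ directly, whereas the paper carries out the change of variables $\bfx=\bfy+p^{r-\nu}\bfz$ explicitly; these are equivalent.
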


\begin{proof}
For simplicity, let us write $v_{(\bfm',s')}=\nu$. Since $(\bfm',\, s')\not\equiv (\bfo,0)\, (\mod p^{r})$, we have $0\leq \nu\leq r-1$. Write $\bfm'=p^\nu \bfm$ and $s'=p^\nu s$. Then $(\bfm,s)\not\equiv (\bfo, 0)\,(\mod p)$.

When $1\leq \nu\leq r-1$, we obtain by a change of variables $\bfx=\bfy+p^{r-\nu}\bfz$ that
\begin{align*}
G_{r}(\bfm',s'):=&\sum\limits_{\bfx\, (\mod p^{r})}e_{p^{r}}\left(s' F(\bfx)+\bfm'\cdot \bfx\right) \\
&= \sum\limits_{\bfy\, (\mod p^{r-\nu})} \sum\limits_{\bfz\,(\mod p^\nu)} e_{p^{r}}\left(p^\nu s F(\bfy)+p^\nu s(\nabla F)(\bfy)\cdot (p^{r-\nu}\bfz)+p^\nu \bfm\cdot (\bfy+p^{r-\nu}\bfz)\right)\\
& =\sum\limits_{\bfy\, (\mod p^{r-\nu})}  e_{p^{r-\nu}}\left(s F(\bfy)+\bfm\cdot \bfy\right)\sum\limits_{\bfz\,(\mod p^\nu)}1 = p^{\nu n} \cdot G_{r-\nu}(\bfm,s).
\end{align*}
When $\nu=0$, the above equality trivially holds. By writing $\gamma=r-\nu$ {\color{black}with $1\leq \gamma \leq r$}, it is sufficient to prove that
\[
G_{\gamma}(\bfm,s)\leq
\begin{cases}
c_1 p^{(\gamma-1/2)n},\quad &\text{if } {\color{black}\gamma=1},\\
c_2 p^{(\gamma-1)n+\kappa}, &\text{if } {\color{black}2\leq \gamma\leq r},
\end{cases}
\]
where $(\bfm,s)\not\equiv (\bfo,0) \,(\mod p)$.

When $\gamma=1$, the conclusion follows from \eqref{eq_cond0}. In the following, let us assume that $\gamma\geq 2$. By a change of variables $\bfx=\bfy+p^{\gamma-1}\bfz$, we obtain that
\begin{align*}
G_{\gamma}(\bfm,s)&=\sum\limits_{\bfx\, (\mod p^{\gamma})}e_{p^{\gamma}}\left(sF(\bfx)+\bfm\cdot \bfx\right) \\
& =\sum\limits_{\bfy\, (\mod p^{\gamma-1})}  e_{p^{\gamma}}\left(sF(\bfy)+\bfm\cdot \bfy\right)\sum\limits_{\bfz\,(\mod p)} e_p\big((s\,(\nabla F)(\bfy) +\bfm)\cdot \bfz\big)\\&= p^n \sum\limits_{\bfy\, (\mod p^{\gamma-1})\atop s(\nabla F)(\bfy)+\bfm \equiv \bfo\, (\mod p)}  e_{p^{\gamma}}\left(sF(\bfy)+\bfm\cdot \bfy\right)\\
&\leq p^n \cdot\#\{\bfy\,(\mod p^{\gamma-1}):\, s\,(\nabla F)(\bfy) +\bfm\equiv \bfo\, (\mod p)\}.
\end{align*}

When $\gamma=2$, it follows from \eqref{eq_cond} that $G_{2}(\bfm,s)\leq c_2 p^{n+\kappa}$. When $\gamma\geq 3$, we further have
\begin{align*}
&\#\{\bfy\,(\mod p^{\gamma-1}):\, s\,(\nabla F)(\bfy)+\bfm\equiv \bfo\, (\mod p)\} \\
& = \#\{(\bfw,\bfz)\in (\bZ/p\bZ)^n\times (\bZ/p^{\gamma-2}\bZ)^n:\, s\,(\nabla F)(\bfw+p\bfz)+\bfm\equiv \bfo\, (\mod p)\} \\
& = p^{(\gamma-2)n}\cdot \#\{\bfw\,(\mod p):\,s\,(\nabla F)(\bfw) +\bfm\equiv \bfo\, (\mod p) \}.
\end{align*}
By \eqref{eq_cond} again, one deduces that
\[
G_{\gamma}(\bfm,s)\leq p^n\cdot p^{(\gamma-2)n}\cdot c_2 p^{\kappa} = c_2 p^{(\gamma-1)n+\kappa}.
\]
The proof is completed.
\end{proof}


\begin{lemma} \label{lemma_exponentialsum}
Let $a$ be an integer with $(a,p)=1$. Let $k\geq 2$ be a natural number with $(k,p)=1$. Then for any $m\,(\mod p)$,
\begin{equation} \label{eq_onevariable_req1}
\sum\limits_{s \,(\mod p)} \left|\sum\limits_{x\,(\mod p)} e_{p}(-m x+sax^k)\right|^2 \ll_k p^2.
\end{equation}
Moreover, for any natural number $r\geq 2$, it satiesfies that
\begin{equation} \label{eq_onevariable_rgeq2}
\sum\limits_{s \,(\mod p^r)} \left|\sum\limits_{x\,(\mod p^r)} e_{p^r}(-m x+sax^k)\right|^2 \ll_k
\begin{cases}
p^{2r},\quad &\text{if }{\color{black}\ord_p}(m)<r-\lceil \frac{r}{k}\rceil,\\
p^{(3-2/k)r},& \text{if }{\color{black}\ord_p}(m)\geq r-\lceil \frac{r}{k}\rceil,
\end{cases}
\end{equation}
for $m \in \bZ/p^r\bZ$.
\end{lemma}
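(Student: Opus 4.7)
The plan is to expand the square and use orthogonality of additive characters in the variable $s$. Opening the modulus, the identity
\[
\sum_{s\,(\mod p^r)} \Big|\sum_{x\,(\mod p^r)} e_{p^r}(-mx + sax^k)\Big|^2 \;=\; p^r\cdot M(m),\qquad M(m):=\!\!\!\sum_{\substack{(x_1,x_2)\,(\mod p^r) \\ x_1^k\equiv x_2^k\,(\mod p^r)}}\!\!\!e_{p^r}\bigl(-m(x_1-x_2)\bigr),
\]
(valid because $(a,p)=1$, so $sa(x_1^k-x_2^k)\equiv 0\,(\mod p^r)$ for all $s$ exactly when $x_1^k\equiv x_2^k$) reduces the lemma to bounding $|M(m)|$. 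Concretely one needs $|M(m)|\ll_k p$ when $r=1$, and for $r\ge 2$ both $|M(m)|\ll_k p^{r(2-2/k)}$ unconditionally and $|M(m)|\ll_k p^r$ in the subregime $\ord_p(m)<r-\lceil r/k\rceil$.

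The $r=1$ bound is immediate: the $k$-th power map on $\bF_p^\ast$ is a homomorphism of a cyclic group whose kernel has size $\gcd(k,p-1)\le k$, and hence $\#\{(x_1,x_2):x_1^k\equiv x_2^k\,(\mod p)\}\le kp+1$ dominates $|M(m)|$. For the unconditional bound at $r\ge 2$, I would stratify pairs by the common valuation $v:=\ord_p(x_1)=\ord_p(x_2)$ (these must agree, else the $p$-adic orders of $x_1^k$ and $x_2^k$ disagree). The zero stratum $v\ge\lceil r/k\rceil$ contains $p^{2(r-\lceil r/k\rceil)}$ pairs; for $v<\lceil r/k\rceil$, writing $x_i=p^v u_i$ with $u_i\in(\bZ/p^{r-v}\bZ)^\ast$ reduces the constraint to $u_1^k\equiv u_2^k\,(\mod p^{r-kv})$. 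Because $(\bZ/p^{r-kv}\bZ)^\ast$ is cyclic and $(k,p)=1$, the $k$-th power map on it has fibres of size $\le \gcd(k,p-1)\le k$, and each element lifts to $p^{v(k-1)}$ preimages modulo $p^{r-v}$; thus the stratum contributes at most $k\,\phi(p^{r-v})\,p^{v(k-1)}\le k\,p^{r+v(k-2)}$ pairs. Summing over $v<\lceil r/k\rceil$ yields (for $k\ge 3$) a geometric series dominated by its last term, producing the total $|M(m)|\ll_k p^{r(2-2/k)}$ which matches the zero stratum, and multiplying by the outer $p^r$ gives the claimed $p^{r(3-2/k)}$.

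For the sharper bound when $\ord_p(m)$ is small, I would exploit character cancellation by rewriting $M(m)=\sum_{y\,(\mod p^r)} e_{p^r}(-my)\,f(y)$ with $f(y):=\#\{x:(x+y)^k\equiv x^k\,(\mod p^r)\}$, and grouping $y$ by $u:=\ord_p(y)$. On each shell the character sum collapses to a Ramanujan sum $c_{p^{r-u}}(m)$, which vanishes unless $u\ge r-1-\ord_p(m)$; combined with the hypothesis $\ord_p(m)<r-\lceil r/k\rceil$, the only surviving shells have $u\ge\lceil r/k\rceil$. For such large $u$ the factorisation
\[
(x+y)^k-x^k \;=\; y\bigl(kx^{k-1}+\tbinom{k}{2}x^{k-2}y+\cdots+y^{k-1}\bigr)
\]
shows the trailing $y$-terms are absorbed by large powers of $p$, and Hensel's lemma (Lemma~\ref{lem_Hensel}) applied to $kx^{k-1}\equiv 0\,(\mod p^{r-u})$ bounds $f(y)\le p^{r-\lceil(r-u)/(k-1)\rceil}$. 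Substituting the explicit Ramanujan values $c_{p^{r-u}}(m)\in\{-p^{r-u-1},\phi(p^{r-u}),0\}$ and summing telescopes to $|M(m)|\ll_k p^r$.

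The main obstacle will be the bookkeeping in the last step, namely matching the Ramanujan-sum thresholds with the Hensel-lemma counts of $f(y)$ uniformly across all surviving shells $u\ge\lceil r/k\rceil$ and tracking the interplay of $\ord_p(y)$, $\ord_p(x)$, and $\ord_p(m)$. For $k=2$ a useful shortcut and sanity check is the substitution $(x_1,x_2)\mapsto(a,b)=(x_1-x_2,x_1+x_2)$, a bijection of $(\bZ/p^r\bZ)^2$ for odd $p$, which linearises the constraint to $ab\equiv 0\,(\mod p^r)$ and reduces $M(m)$ to a direct Ramanujan-sum computation; the general $k\ge 3$ case requires the stratified argument above.
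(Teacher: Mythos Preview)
Your opening reduction---expanding the square and using orthogonality in $s$ to obtain $p^r M(m)$ with $M(m)=\sum_{x_1^k\equiv x_2^k}e_{p^r}(-m(x_1-x_2))$---is exactly the paper's first move. From there the paper takes one further algebraic step you omit: it rewrites
\[
M(m)\;=\;\sum_{j\,(\mod p^r)}\Bigl|\sum_{x:\,x^k\equiv j\,(\mod p^r)}e_{p^r}(-mx)\Bigr|^2,
\]
a sum of non-negative terms indexed by the common value $j=x_1^k=x_2^k$. This positivity decomposition replaces all of your shell-by-shell analysis. For $j$ a unit, Hensel bounds the number of $x$ (hence the inner sum) by $\gcd(k,p-1)\le k$, so those $j$ contribute $\ll_k p^r$ to $M(m)$. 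For $j=0$, the solution set is exactly $\{x: p^{\lceil r/k\rceil}\mid x\}$, so the inner sum is a complete additive character over a subgroup: it equals $p^{r-\lceil r/k\rceil}$ when $\ord_p(m)\ge r-\lceil r/k\rceil$ and $0$ otherwise. Both cases of the lemma then drop out in one line, with no Ramanujan sums and no telescoping.

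Your valuation stratification for the unconditional bound is correct for $k\ge 3$ and is essentially the same pair count reorganised. For the sharp bound your Ramanujan-sum route is workable in principle, but it rests on a fact you leave unstated: that $f(y)=\#\{x:(x+y)^k\equiv x^k\}$ is constant on each shell $\ord_p(y)=u$. This is true---the substitution $(x,y)\mapsto(cx,cy)$ for any unit $c$ is a bijection preserving the congruence---but without it the ``collapse to $c_{p^{r-u}}(m)$'' is unjustified. Even granting this, you still face the bookkeeping you yourself flag as the main obstacle (evaluating $F(u)$ on the surviving shells $u\ge\lceil r/k\rceil$ and summing against the explicit Ramanujan values), whereas the paper's $j=0$ contribution is a single explicit geometric sum. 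Adopting the $j$-decomposition makes both regimes immediate and also renders the separate $k=2$ treatment unnecessary.
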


\begin{proof}
Note that
\begin{align*}
&\sum\limits_{s\,(\mod p^r)} \left|\sum\limits_{x\,(\mod p^r)}e_{p^r}(-m x+sax^k)\right|^2 = \sum\limits_{x,y\,(\mod p^r)} \sum\limits_{s\,(\mod p^r)}e_{p^r}\big(-m(x-y)+sa(x^k-y^k)\big)\\
&\qquad=p^r \sum\limits_{x,y\,(\mod p^r)\atop x^k\equiv y^k\,(\mod p^r)} e_{p^r}\big(-m (x-y)\big) = p^r \sum\limits_{j\,(\mod p^r)}\Big|\sum\limits_{x\,(\mod p^r)\atop x^k\equiv j\,(\mod p^r)}e_{p^r}(-mx)\Big|^2.
\end{align*}
When $j\not\equiv 0\,(\mod p)$, one has
\begin{equation} \label{eq_onevaraible}
\Big|\sum\limits_{x\,(\mod p^r)\atop x^k\equiv j\,(\mod p^r)}e_{p^r}(-mx)\Big| \leq \#\{x\,(\mod p^r):\, x^k\equiv j\,(\mod p^r)\}.
\end{equation}
When $r=1$, it is not hard to see that the above quantity is $\ll_k 1$. By Hensel's lemma, we conclude that \eqref{eq_onevaraible} is $\ll_k 1$ for any $r\geq 1$. Hence
\[
p^r \sum\limits_{j\not\equiv 0\,(\mod p^r)}\Big|\sum\limits_{x\,(\mod p^r)\atop x^k\equiv j\,(\mod p^r)}e_{p^r}(-mx)\Big|^2 \ll p^{2r}.
\]

Next, we consider the case that $j\equiv 0\,(\mod p)$. When $r=1$, it is easy to see that
\[
p\cdot \Big|\sum\limits_{x\,(\mod p)\atop x^k\equiv 0\,(\mod p)}e_p(-mx)\Big|^2 =p.
\]
When $r\geq 2$, let us take $\mu=\lceil \frac{r}{k}\rceil$. Then $x^k\equiv 0\,(\mod p^r)$ if and only if $p^\mu| x$. By writing $x=p^\mu y$, we obtain that
\[
\sum\limits_{x\,(\mod p^r)\atop x^k\equiv 0\,(\mod p^r)}e_{p^r}(-mx) = \sum\limits_{y\,(\mod p^{r-\mu})} e_{p^{r-\mu}}(-my) \ll p^{r-\mu} \ll p^{r(1-1/k)}.
\]
Donote $\nu=\ord_p(m)$ and write $m=p^{\nu}\widetilde{m}$.  When $1\leq \nu<r-\mu$, one deduces by a change of variables $y=w+p^{r-\mu-\nu}z$ that
\[
\sum\limits_{y\,(\mod p^{r-\mu})} e_{p^{r-\mu}}(-my) = \sum\limits_{w\,(\mod p^{r-\mu-\nu})}e_{p^{r-\mu-\nu}}(-\tilde{m}w)\sum\limits_{z\,(\mod p^\nu)}1 =0.
\]
When $\nu=0$, the above result also holds. So the summand over $j\equiv 0\,(\mod p^r)$ is
\[
p^r\cdot \Big|\sum\limits_{x\,(\mod p^r)\atop x^k\equiv 0\,(\mod p^r)}e_{p^r}(-mx)\Big|^2 \ll
\begin{cases}
0,\quad &\text{if }\nu<r-\lceil \frac{r}{k}\rceil,\\
p^{r(3-2/k)},& \text{if }\nu\geq r-\lceil \frac{r}{k}\rceil.
\end{cases}
\]
The lemma then follows.
\end{proof}

\section{Proof of Theorems \ref{main}, \ref{main2}, and Corollary \ref{cor_diag_hom}}

For simplicity, we write $q=p^r$ in this section.

\begin{proof} [Proof of Therorem \ref{main}]
Recall that $S_{r, j}=\{\bfz\in (\mathbb{Z}/q\mathbb{Z})^n\colon F(\bfz)\equiv j\, (\mod q)\}$. First, we have
\begin{align*}
\cN_{r,j}:=&\#\{(\bfx,\bfy)\in E_1\times E_2:\, F(\bfx-\bfy)\equiv j\, (\mod q)\} \\
&=\sum\limits_{\bfx,\bfy\, (\mod q)} 1_{E_1}(\bfx)1_{E_2}(\bfy)1_{S_{r,j}}(\bfx-\bfy) = q^{2n} \cdot \frac{1}{q^n} \sum\limits_{\bfx\, (\mod q)} 1_{E_1}(\bfx)(1_{E_2}\ast 1_{S_{r,j}})(\bfx)\\
&=q^{2n} \sum\limits_{\bfm\, (\mod q)} \overline{\widehat{1_{E_1}}(\bfm)}\widehat{1_{E_1}\ast 1_{S_{r,j}}}(\bfm) = q^{2n} \sum\limits_{\bfm\, (\mod q)} \overline{\widehat{1_{E_1}}(\bfm)}\widehat{1_{E_1}}(\bfm)\widehat{1_{S_{r,j}}}(\bfm) =\cM+\cE,
\end{align*}
where
\[
\cM= q^{-n}|E_1||E_2||{S_{r,j}}|~~\mbox{and}~~ \cE=q^{2n}\sum\limits_{\bfm\not\equiv  \bfo \, (\mod q)}\overline{\widehat{1_{E_1}}(\bfm)}\widehat{1_{E_1}}(\bfm)\widehat{1_{S_{r,j}}}(\bfm).
\]

By the condition (\romannumeral2), one has
\[
|\#S_{1,j}-p^{n-1}| \leq c_1 p^{n-1}.
\]
With the condition (\romannumeral1), we may apply Hensel's lemma to get
\[
\big|\#S_{r,j} - q^{n-1}\big|\leq c_1 q^{n-1}
\]
for all $r,j$ with $r\geq 1$ and $(j,p)=1$. 

Applying Lemma \ref{lemma_fourier}, one obtains that
\[
\left\vert\widehat{1_{S_{r,j}}}(\bfm)\right\vert \leq \max\{c_2,c_3\}\cdot q^{-1}p^{-\frac{n-1}{2}}
\]
{\color{black}for $\bfm\not\equiv \bfo\,(\mod q)$.}
Hence,
\begin{align*}
|\cE| &\leq q^{2n} \cdot\sup\limits_{\bfm\not\equiv \bfo\, (\mod q)}\big|\widehat{1_{S_{r,j}}}(\bfm)\big|\cdot\left(\sum\limits_{\bfm\, (\mod q)}\big|\overline{\widehat{1_{E_1}}(\bfm)}\big|^2\right)^{1/2}\left(\sum\limits_{\bfm\, (\mod q)}\big|\overline{\widehat{1_{E_2}}(\bfm)}\big|^2\right)^{1/2}\\
&=q^n|E_1|^{1/2}|E_2|^{1/2}\cdot \sup\limits_{\bfm\not\equiv \bfo \, (\mod q)}\big|\widehat{1_{S_{r,j}}}(\bfm)\big| \leq \max\{c_2,c_3\}\cdot  \sqrt{|E_1||E_2|}\cdot q^{n-1}p^{-(n-1)/2}.
\end{align*}

Combining all bounds, we conclude that
\[
\left|\cN_{r,j} -\frac{|E_1||E_2|}{q}\right|\leq  \frac{c_1 |E_1||E_2|}{q} + \max\{c_2,c_3\}\cdot  \sqrt{|E_1||E_2|}\cdot q^{n-1}p^{-(n-1)/2}.
\]
So $\cN_{r,j}>0$ if
\[
\frac{\sqrt{|E_1||E_2|}}{q^n} > \frac{\max\{c_2,c_3\}}{(1-c_1) p^{\frac{n-1}{2}}}.
\]
The proof is completed.
\end{proof}

\begin{proof}[Proof of Corollary \ref{cor_diag_hom}]
Let $p>\max\{k(k-1), |a_1|,\ldots,|a_n|\}$ be sufficiently large. Then $(k,p)=1$ and $(a_i,p)=1$ for all $1\leq i\leq n$.

Let us verify the four conditions in Theorem \ref{main} for {\color{black}$F(\bfx)=\sum\nolimits_{i=1}^n a_ix_i^k$} for all $j\in \bZ$ with $(j,p)=1$. We have
\[
(\nabla F)(\bfx) =  (ka_1x_1^{k-1},\ldots, ka_n x_n^{k-1}) \not\equiv \bfo \, (\mod p)
\]
when $\bfx \not\equiv \bfo\,(\mod p)$. Then the condition (\romannumeral1) holds.  Moreover, the conditions (\romannumeral2) and (\romannumeral3) follow from Lemmas \ref{lem_solutions} and \ref{lem_Fourier_diag_hom}, respectively. In the following, let us verify condition (\romannumeral4).

For $\bfm{\color{black}=(m_1,m_2,\ldots,m_n)}\not\equiv \bfo\,(\mod p)$ and $(j,p)=1$, the cardinality of the set in the condition (\romannumeral4), denoted by $\fN$, satisfies that
\begin{align*}
\fN &\leq \sum\limits_{1\leq t\leq n\atop m_t\not\equiv 0\,(\mod p)} \#\left\{\bfx\,(\mod p):\, \sum\limits_{i=1}^n a_ix_i^k \equiv j\,(\mod p),\right.\\
&\qquad \qquad \left.x_t\not\equiv 0\,(\mod p),\, m_i a_t x_t^{k-1}\equiv m_ta_i x_i^{k-1}\, (\mod p)\, \mbox{for all}~ 1\leq i\leq n\right\}.
\end{align*}
Thus, for the $\bfx$ taken into account, we have $x_i^{k-1}\equiv (m_ta_i)^{-1}m_i a_t x_t^{k-1}\, (\mod p)$.

When $k=2$, it follows that
\[
\fN \leq \sum\limits_{1\leq t\leq n\atop m_t\not\equiv 0\,(\mod p)} \#\left\{x_t\,(\mod p):\, \Big(\sum\limits_{i=1}^n \big((m_ta_i)^{-1}m_ia_t\big)^2 a_i\Big)x_t^2 \equiv j\,(\mod p)\right\}\leq 2n\ll 1.
\]
When $k=3$, we further have
\[
x_i^{3}\equiv x_i^{3+p}  \equiv \big((m_ta_i)^{-1}m_i a_t x_t^2\big)^{\frac{3+p}{2}}\equiv \big((m_ta_i)^{-1}m_i a_t \big)^{\frac{3+p}{2}}x_t^3\, (\mod p).
\]
So
\[
\fN \leq \sum\limits_{1\leq t\leq n\atop m_t\not\equiv 0\,(\mod p)} \#\left\{x_t\,(\mod p):\, \Big(\sum\limits_{i=1}^n \big((m_ta_i)^{-1}m_i a_t \big)^{\frac{3+p}{2}} a_i\Big)x_t^3 \equiv j\,(\mod p)\right\}\leq 3n\ll 1.
\]

When $k\geq 4$, we have
\[
\fN\leq \sum\limits_{1\leq t\leq n\atop m_t\not\equiv 0\,(\mod p)} \#\left\{\bfx\,(\mod p):\, x_i^{k-1}\equiv (m_ta_i)^{-1}m_i a_t x_t^{k-1}\, (\mod p)\,\mbox{for all}~ 1\leq i\leq n\right\}.
\]
The variable $x_t$ can be chosen mod $p$, and each $x_i$ $(i\neq t)$ has at most $k-1$ choices. So
\[
\fN  \leq n\cdot p\cdot k^{n-1} \ll p \ll p^{\frac{n-1}{2}}
\]
when $n\geq 3$.

Finally, when $n=2$, let us deal with $t=1$ without loss of generality. In this situation, one has $m_1,x_1\not\equiv 0\, (\mod p)$. Note that
\[
j\equiv a_1x_1^k+ a_2x_2^k \equiv a_1x_1^k+ a_2(m_1a_2)^{-1}m_2 a_1 x_1^{k-1} x_2\equiv a_1 x_1^{k-1}\big(x_1+m_1^{-1}m_2x_2\big)\, (\mod p).
\]
If $m_2\equiv 0\, (\mod p)$, then the above equivalence has at most $k-1$ solutions in $x_1\, (\mod p)$. Otherwise we have $x_2\equiv m_1m_2^{-1}(ja_1^{-1}x_1^{-(k-1)}-x_1)\, (\mod p)$. Then
\begin{align*}
(m_1a_2)^{-1}m_2 a_1 x_1^{k-1} \equiv x_2^{k-1} \equiv \left(m_1m_2^{-1}(ja_1^{-1}x_1^{-(k-1)}-x_1)\right)^{k-1}\,(\mod p),
\end{align*}
i.e.,
\begin{align*}
&\big(-(m_1a_2)^{-1}m_2a_1+(-1)^{k-1}\big)x_1^{k(k-1)} + (-1)^{k-2}m_1^{k-1}m_2^{-(k-1)}(k-1)(ja_1^{-1})x_1^{k(k-2)}\\
&\qquad +\sum\limits_{i=2}^{k-1} (-1)^{k-1-i} {k-1\choose i}(ja_1^{-1})^i x_1^{k(k-1-i)}\equiv 0\, (\mod p).
\end{align*}
This involves a non-zero polynomial of degree no larger than $k(k-1)$. So there are at most $k(k-1)$ solutions in $x_1\, (\mod p)$. Now we conclude that $\fN \leq 2k(k-1)^2\ll 1$.

Corollary \ref{cor_diag_hom} then follows from Theorem \ref{main}.
\end{proof}

\begin{proof} [Proof of Theorem \ref{main2}]
For sufficiently large $p$, we have
\[
(\nabla F)(x) = (k_1 a_1x_1^{k_1-1},\ldots, k_na_nx_n^{k_n-1}) \not\equiv \bfo\, (\mod p)
\]
when $\bfx\not\equiv \bfo\,(\mod p)$. Hensel's lemma can be applied on the roots of $F(\bfx)\equiv j\,(\mod p^l)$ $(l\geq 1)$. Combining Lemma \ref{lem_solutions}, one can obtain that $|S_{r,j}|\ll q^{n-1}$. Then we deduce similarly as in the proof of Theorem \ref{main} that
\[
\cN_{r,j} \ll_F q^{-1}|E_1||E_2|+ q^{2n} \cdot\sum\limits_{\bfm\not\equiv  \bfo \, (\mod q)}\overline{\widehat{1_{E_1}}(\bfm)}\widehat{1_{E_2}}(\bfm)\widehat{1_{S_{r,j}}}(\bfm).
\]
Now
\begin{align*}
\sum\limits_{j\,(\mod q)}|\cN_{r,j}|^2 & \ll \Sigma_1+\Sigma_2,
\end{align*}
where
\[
\Sigma_1 = \sum\limits_{j\,(\mod q)}\left(q^{-1}|E_1 ||E_2 |\right)^2=q^{-1}|E_1 |^2|E_2 |^2,
\]
and
\[
\Sigma_2 = q^{4n}\sum\limits_{\bfm ,\bfl  \not\equiv \bfo \, (\mod q)}\overline{\widehat{1_{E_1 }}(\bfm )}\widehat{1_{E_2 }}(\bfm )\widehat{1_{E_1 }}(\bfl )\overline{\widehat{1_{E_2 }}(\bfl )}\sum\limits_{j\,(\mod q)}\widehat{1_{S _{r,j}}}(\bfm ) \overline{\widehat{1_{S _{r,j}}}(\bfl )}.
\]

Note that
\begin{align*}
\Sigma_3:=&\sum\limits_{j\,(\mod q)}\widehat{1_{S _{r,j}}}(\bfm ) \overline{\widehat{1_{S _{r,j}}}(\bfl )}\\
& = \frac{1}{q^{2n}}\sum\limits_{j\,(\mod q)} \sum\limits_{\bfx \,(\mod q)\atop F(\bfx)\equiv j\,(\mod q)}  \sum\limits_{\bfy \,(\mod q)\atop F(\bfy)\equiv j\,(\mod q)} e_q(-\bfm \cdot \bfx +\bfl \cdot \bfy) \\
& = \frac{1}{q^{2n}} \sum\limits_{\bfx,\bfy \,(\mod q)\atop F(\bfx)\equiv F(\bfy)\,(\mod q)}   e_q(-\bfm \cdot \bfx+\bfl\cdot \bfy)\\
& = \frac{1}{q^{2n+1}} \sum\limits_{s\,(\mod q)}\sum\limits_{\bfx,\bfy \,(\mod q)}   e_q(-\bfm \cdot \bfx+\bfl\cdot \bfy+sF(\bfx)-sF(\bfy))\\
& = \frac{1}{q^{2n+1}} \sum\limits_{s\not\equiv 0\,(\mod q)}\sum\limits_{\bfx,\bfy \,(\mod q)}   e_q(-\bfm \cdot \bfx+\bfl\cdot \bfy+sF(\bfx)-sF(\bfy)),
\end{align*}
where the last two steps follows from the orthogonality and the fact that $\bfm,\bfl \not\equiv \bfo\,(\mod q)$. 

Noting that the inner summation is a product of exponential sums in one variable, one has
\[
|\Sigma_3| \leq \frac{1}{q^{2n+1}}\cdot \sum\limits_{s\not\equiv 0 \,(\mod q)} \left|\prod\limits_{i=1}^n \sum\limits_{x_i\,(\mod q)}e_q\big(-m_ix_i+sa_ix_i^{k_i}\big)\cdot \prod\limits_{i=1}^n\sum\limits_{y_i\,(\mod q)} e_q\big(l_iy_i-sa_iy_i^{k_i}\big)\right|.
\]
For $(m_i,s),(l_i,s)\not\equiv (0,0)\,(\mod p)$, the condition \eqref{eq_cond0} in Lemma \ref{lem_Fourier2} follows from Lemma \ref{lem_Weil}, and the condition \eqref{eq_cond} in Lemma \ref{lem_Fourier2} is confirmed by observing that
\[
\#\{x_i(\mod p):\, -m_i+sa_ik_ix_i^{k_i-1}\equiv 0\,(\mod p)\},\,\#\{y_i(\mod p):\,-l_i+sa_ik_iy_i^{k_i-1}\equiv 0\,(\mod p)\}\leq k_i-1.
\]
For $s\not\equiv 0\,(\mod q)$, we deduce from Lemma \ref{lem_Fourier2} that
\begin{equation} \label{eq_onevariable_2n-2}
\left|\sum\limits_{x_i\,(\mod q)}e_q\big(-m_ix_i+sa_ix_i^{k_i}\big)\right|,\,\, \left|\sum\limits_{y_i\,(\mod q)} e_q\big(l_iy_i-sa_iy_i^{k_i}\big)\right|\,\, \ll_k \, qp^{-1/2}
\end{equation}
for any $m_i,l_i\,(\mod q)$ with $1\leq i\leq n$. 
Moreover, let us assume without loss of generality that $k_1=k_\ast=\min\nolimits_{1\leq i\leq n}k_i$. Then
\begin{align}
&|\Sigma_3| \ll  q^{-2n-1}\cdot \left(q p^{-1/2}\right)^{2n-2}\cdot \sum\limits_{s\not\equiv 0\,(\mod q)} \left|\sum\limits_{x_1\,(\mod q)}e_q\big(-m_1x_1+sa_1x_1^{k_1}\big)\cdot \sum\limits_{y_1\,(\mod q)} e_q\big(l_1y_1-sa_1y_1^{k_1}\big)\right| \nonumber\\
& \leq q^{-3}p^{-n+1}\cdot  \left(\sum\limits_{s\,(\mod q)}  \left|\sum\limits_{x_1\,(\mod q)}e_q\big(-m_1x_i+sa_1x_1^{k_1}\big)\right|^{2}\right)^{\frac{1}{2}}\left(\sum\limits_{s\,(\mod q)}  \left|\sum\limits_{y_1\,(\mod q)}e_q\big(-l_1x_1+sa_1y_1^{k_1}\big)\right|^{2}\right)^{\frac{1}{2}} \nonumber\\
&\leq q^{-3}p^{-n+1}\cdot q^{3-2/k_\ast}
= q^{-2/k_\ast}p^{-n+1}, \label{eq_improve_req1}
\end{align}
where we have applied \eqref{eq_onevariable_2n-2}, the Cauchy-Schwartz inequality and Lemma \ref{lemma_exponentialsum}.

It follows that
\begin{align*}
\Sigma_2&\ll q^{4n}\cdot q^{-2/k_\ast}p^{-n+1}\cdot \prod\limits_{i=1,2}\left(\sum\limits_{\bfm (\mod q)}|\widehat{1_{E_i}}(\bfm )|^2\right)^{1/2}\cdot \prod\limits_{i=1,2}\left(\sum\limits_{\bfl (\mod q)}|\widehat{1_{E_i}}(\bfl )|^2\right)^{1/2}\\
& \ll q^{4n}\cdot q^{-2/k_\ast}p^{-n+1}\cdot q^{-2n}|E_1||E_2| = q^{2n-2/k\ast} p^{-n+1} |E_1||E_2|.
\end{align*}
As a result, we have
\[
\sum\limits_{j\,(\mod q)}\cN_{r,j}^2 \ll q^{-1}|E_1|^2|E_2|^2 + q^{2n-2/k_\ast} p^{-n+1} |E_1||E_2|.
\]

Moreover, since
\[
|E_1 |^2|E_2 |^2 = \left(\sum\limits_{j\in \Delta_{n,r} (E_1 ,E_2 )}\cN_{r,j}\right)^2\leq |\Delta_{n,r} (E_1 ,E_2 )|\cdot \sum\limits_{j\,(\mod q)}\cN_{r,j}^2,
\]
we conclude that
\[
|\Delta_{n,r} (E_1 ,E_2 )| \geq \frac{|E_1 |^2|E_2 |^2}{\sum\limits_{j\,(\mod q)}\cN_{r,j}^2} \gg \min\{q,\,q^{-2n+2/k_\ast}p^{n-1}|E_1||E_2|\}.
\]
In particular, one has $|\Delta_{n,r} (E_1 ,E_2)|\gg q$ if
\[
\frac{\sqrt{|E_1 ||E_2 |}}{q^{n}} \gg q^{\frac{1}{2}-\frac{1}{k_\ast}}p^{-\frac{n-1}{2}}.
\]
This completes the proof.
\end{proof}

\begin{remark}
For the case $r=1$, we may use \eqref{eq_onevariable_req1} instead of \eqref{eq_onevariable_rgeq2} in the above proof, and obtain $\ll p^{-n}$ at the end of \eqref{eq_improve_req1}. As a consequence, the same result \eqref{eq_main2_r=1} as in \cite{KS} can be obtained.
\end{remark}

\section{Extension estimates associated to circles and orbits}
Let $V$ be a variety over $\mathbb{F}_{p}^n$. The $L^u\to L^{u'}$ Fourier extension problem for $V$ asks us to determine all exponents $1\le u, u'\le \infty$ such that the following inequality
\begin{equation}\label{extension}||(fd\sigma)^\vee||_{L^{u'}(\mathbb{F}_p^n, dm)}\le C ||f||_{L^u(V, d\sigma)},\end{equation}
holds for some constant $C>0$ and  all complex valued functions $f$ on $V$. Here  $dm$ is the counting measure on $\bF_p^n$, $d\sigma$ is the normalized surface measure on $V$, and
 \[(fd\sigma)^\vee(m)=\frac{1}{|V|}\sum_{x\in V}f(x)e_p(m\cdot x),\]
 \[||f||_{L^u(V, d\sigma)}=\left(\frac{1}{|V|}\sum_{x\in V}|f(x)|^u\right)^{1/u},\]
 and \[||g||_{L^{u'}(\mathbb{F}_p^n, dm)}=\left(\sum_{m\in \mathbb{F}_p^n}|g(m)|^{u'}\right)^{1/u'},\] for any functions $f$ and $g$. The $L^u\to L^{u'}$ Fourier extension problem has been studied intensively in the literature for paraboloids, spheres, cones, and homogeneous varieties with applications in several areas of Mathematics including \textit{Discrete Geometry} and \textit{Combinatorial Number Theory}. We refer the interested reader to a series of papers \cite{IK09, IK10, IKL, european, Le13, MT04} for more discussions.

When $V$ is a circle in the plane $\mathbb{F}_p^2$, it has been proved in \cite{chapman} that
\[||(fd\sigma)^\vee||_{L^4(\mathbb{F}_p^2, dm)}\ll ||f||_{L^2(V, d\sigma)}.\]
In this paper, to improve Corollary \ref{cor_diag_hom} in two dimensions, we need to study the finite p-adic ring analog of this estimate. The extension estimates are of independent interest and are expected to have more applications in other topics.

\subsection{Extension theorems}

In the rest of this section, we always assume that $n=2$. Recall that
\[
C_{r,j}=\{\bfx\in (\bZ/p^r\bZ)^2:\, \|\bfx\|\equiv j\,(\mod p^r)\}.
\]
We denote by $d\sigma_{r,j}$ the normalised surface measure on $C_{r,j}$. 

\begin{theorem}\label{resitriction} \color{black}
Let {\color{black}$p$ be an odd prime,} and 
$r\ge 1$ be an integer. Let $j\in (\mathbb{Z}/p^r\mathbb{Z})^\ast$. Then
 \[
 \Big(\sum_{m\in (\mathbb{Z}/p^r\mathbb{Z})^2}|(fd\sigma_{r,j})^\vee(m)|^4\Big)^{1/2}\ll p^{-\frac{r+1}{2}}\sum_{x\in C_{r,j}}|f(x)|^2.
 \]
\end{theorem}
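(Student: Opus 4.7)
The plan is to reduce the $L^4$ bound to a weighted additive energy estimate for $C_{r,j}$. Expanding $|(fd\sigma_{r,j})^\vee(m)|^4$ and summing over $m$ via orthogonality yields
\[
\sum_{m\in(\bZ/p^r\bZ)^2} |(fd\sigma_{r,j})^\vee(m)|^4 \;=\; \frac{p^{2r}}{|C_{r,j}|^4}\sum_{z} |F(z)|^2,\qquad F(z):=\!\!\!\sum_{\substack{x+y=z\\ x,y\in C_{r,j}}}\!\! f(x)f(y).
\]
Since $j$ is a unit, Hensel's lemma applied to $\|x\|^2 = j$ (whose gradient $2x$ is nonvanishing mod $p$ on the sphere) gives $|C_{r,j}|\asymp p^r$. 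Thus it suffices to prove $\sum_z |F(z)|^2\ll p^{r-1}\bigl(\sum_{x\in C_{r,j}}|f(x)|^2\bigr)^2$.

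I would first peel off $z=0$: Cauchy--Schwarz gives $|F(0)|^2=|\sum_x f(x)f(-x)|^2\le (\sum|f|^2)^2$, which is harmless. For $z\ne 0$, a second Cauchy--Schwarz inside $F(z)$ gives $|F(z)|^2\le N(z)\sum_{x+y=z}|f(x)|^2|f(y)|^2$, where $N(z):=\#\{(x,y)\in C_{r,j}^2:x+y=z\}$. The crux is the pointwise bound
\[
N(z)\;\ll\;p^{v_p(z)} \;+\; p^{\lfloor v_p(4j-\|z\|^2)/2\rfloor}\qquad (z\ne 0).
\]
To prove it, observe that $(x,y)\mapsto w:=x-y$ gives a bijection between $\{(x,y)\in C_{r,j}^2:x+y=z\}$ and $\{w:\,z\cdot w\equiv 0,\ \|w\|^2\equiv 4j-\|z\|^2\pmod{p^r}\}$ (using that $p$ is odd, so $2$ is a unit). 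Writing $z=p^v\tilde z$ with $\tilde z$ having a unit coordinate, the linear constraint forces $w_1=-\rho w_2+p^{r-v}u$ with $u\in\bZ/p^v\bZ$, and substitution into the sphere equation produces a one-variable quadratic congruence in $w_2$ whose solution count is controlled by Hensel lifting; the two terms in the bound correspond respectively to the linear defect $p^v$ (the fibre size over the linear constraint) and to the quadratic tangency $p^{\lfloor v_p(4j-\|z\|^2)/2\rfloor}$ (the multiplicity coming from $w_2^2\equiv 4j-\|z\|^2$).

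The main obstacle is this $N(z)$ estimate, whose proof requires a careful case analysis by $v_p(z)$, $v_p(\|z\|^2)$ and $v_p(4j-\|z\|^2)$: the isotropic case $\|z\|^2\equiv 0\bmod p$ with $v_z=0$ yields $N(z)=0$ by a direct contradiction of valuations; the ``odd tangency'' $v_p(4j-\|z\|^2)$ odd is likewise void, by the usual QR obstruction to solvability of $w^2\equiv \alpha$; and the regime $v_z>r/2$ forces one to retain the $u^2$-term, where injectivity of $u\mapsto au+p^{r-v}u^2$ for a unit $a$ settles the count. Once the pointwise $N(z)$ bound is in hand, I would use the crucial identity $4j-\|x+y\|^2 = -2 x\cdot y + 2j$ (valid on $C_{r,j}^2$) to rewrite $v_p(4j-\|x+y\|^2) = v_p(\|x-y\|^2)$, then bound $\sum_{z\ne 0}|F(z)|^2$ in slabs via the trivial estimates
\[
\sum_{\substack{x,y\in C_{r,j}\\ p^k\mid x+y}}|f(x)|^2|f(y)|^2\le \Big(\sum|f|^2\Big)^2\quad\text{and}\quad \sum_{\substack{x,y\in C_{r,j}\\ p^{2s}\mid\|x-y\|^2}}|f(x)|^2|f(y)|^2\le \Big(\sum|f|^2\Big)^2.
\]
The geometric sums $\sum_{k=0}^{r-1}p^k\ll p^{r-1}$ and $\sum_{s=0}^{\lfloor r/2\rfloor}p^s\ll p^{r/2}\le p^{r-1}$ (the last for $r\ge 2$; the case $r=1$ recovers the classical estimate of \cite{chapman}) then deliver $\sum_{z\ne 0}|F(z)|^2\ll p^{r-1}(\sum|f|^2)^2$. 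Combined with the $z=0$ term, this completes the proof with a constant independent of $r$.
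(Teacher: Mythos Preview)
Your proposal is correct and shares the same skeleton as the paper's argument: expand the fourth moment, separate the diagonal $z=0$, apply Cauchy--Schwarz, and reduce everything to a pointwise bound on
\[
N(z)=\#\{(x,y)\in C_{r,j}^2:\,x\pm y=z\}.
\]
Where you genuinely diverge from the paper is in how this representation count is controlled. The paper (Lemma~\ref{energy1}) proves the single \emph{uniform} bound $N(z)\le 2p^{r-1}$ for every nonzero $z$, by writing down the $4\times 4$ Jacobian of the system $\|x\|=j,\ \|y\|=j,\ x-y=z$ in $(x_1,y_1,x_2,y_2)$, observing that its rank mod $p$ is always at least $3$ (since $j$ is a unit forces $x,y\not\equiv\bfo$), and invoking Hensel's lemma once. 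That immediately gives $\sum_{z\ne 0}|F(z)|^2\le 2p^{r-1}\bigl(\sum|f|^2\bigr)^2$ with no slab decomposition at all.

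Your route instead passes to the perpendicular variable $w=x-y$, turning the count into a line--circle intersection $\{w:\,z\cdot w=0,\ Q(w)=4j-Q(z)\}$, and extracts the finer estimate $N(z)\ll p^{v_p(z)}+p^{\lfloor v_p(4j-Q(z))/2\rfloor}$ through case analysis and then sums in dyadic shells. This is valid, and the parametrisation is pleasant, but notice that both terms in your bound are already $\le p^{r-1}$ (trivially $v_p(z)\le r-1$ and $\lfloor r/2\rfloor\le r-1$ for $r\ge 2$), so your refined bound collapses to the paper's uniform one before the slab sums are ever needed. In effect you recover Lemma~\ref{energy1} by a longer path; the extra precision in $N(z)$ is real but unused here. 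The paper's Jacobian-rank argument buys brevity and avoids the isotropic/tangency case split; your argument buys sharper local information that could be useful if one wanted finer $L^p$ extension estimates. (One cosmetic point: in this paper $\|x\|$ already denotes the quadratic form $x_1^2+x_2^2$, so your $\|z\|^2$ should be $\|z\|$ throughout.)
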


We now define a more general extension problem.
Denote
\[
G_r:=SO_2(\bZ/p^r\bZ)=\left\{\begin{bmatrix}
a &-b\\
b & a
\end{bmatrix}\,(\mod p^r):\,\, a^2+b^2\equiv 1\,(\mod p^r)\right\}
\]
for $r\geq 1$. The group $G_r$ acts on $(\bZ/p^r\bZ)^2$ naturally by $\bfx \mapsto \theta \bfx$, where $\theta \in G_r$ and $\bfx\in (\bZ/p^r\bZ)^2$. Write the orbit of $\bfx$ by
\[
\textsf{orb}_r(\bfx) = \big\{\theta \bfx:\, \theta \in G_r\big\},
\]
and the stabilizer of $\bfx$ is
\[
\textsf{stab}_r(\bfx) = \{\theta\in G_r:\, \theta \bfx \equiv \bfx\, (\mod p^r)\}.
\]


{\color{black}For $\bfm=(m_1,m_2)\in (\bZ/p^r\bZ)^2$, recall that $v_\bfm=\min\{\ord_p(m_1),\ord_p(m_2)\}$.}

\begin{theorem}\label{restriction3}
    Let $p\equiv 3\, (\mod 4)$ and $r\ge 1$ be an integer. Let $\bfm\in (\bZ/p^r\bZ)^2$ be such that $\bfm\neq \bfo$. Denote by $d{\color{black}\sigma_r}$ the normalized surface measure on $\textsf{orb}_r(\bfm)$. Then
    \[
    \Big(\sum_{x\in (\mathbb{Z}/p^r\mathbb{Z})^2}|(fd\sigma_r)^\vee(x)|^4\Big)^{1/2}\ll  p^{-\frac{r-3v_\bfm+1}{2}}\sum_{x\in \textsf{orb}_r(\bfm)}|f(x)|^2.
    \]
\end{theorem}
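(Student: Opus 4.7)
The plan is to reduce the statement to the circle extension estimate already established as Theorem~\ref{resitriction}, via rescaling by $p^{v_\bfm}$. Write $\bfm = p^{v_\bfm}\tilde{\bfm}$ with $v_{\tilde{\bfm}} = 0$. The key structural observation is that scalar multiplication by $p^{v_\bfm}$ commutes with the $SO_2$-action, so every element of $\textsf{orb}_r(\bfm)$ factors as $p^{v_\bfm}\tilde{\bfy}$ with $\tilde{\bfy}$ in the orbit of $\tilde{\bfm}$ inside $(\bZ/p^{r-v_\bfm}\bZ)^2$ under $SO_2(\bZ/p^{r-v_\bfm}\bZ)$. I will rewrite the $L^4$ sum over $(\bZ/p^r\bZ)^2$ as a rescaled $L^4$ sum over $(\bZ/p^{r-v_\bfm}\bZ)^2$ on this reduced orbit, then appeal to Theorem~\ref{resitriction}.

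First I would identify the reduced orbit with a full circle $C_{r-v_\bfm,j}$ where $j := \|\tilde{\bfm}\|^2$. The hypothesis $p \equiv 3 \pmod 4$ is used here: since $-1$ is a non-residue mod $p$ and $\tilde{\bfm}$ is primitive, $j$ is a unit in $\bZ/p^{r-v_\bfm}\bZ$. Hensel's lemma applied to $a^2 + b^2 = 1$ and to $a^2 + b^2 = j$ gives $|G_{r-v_\bfm}| = p^{r-v_\bfm-1}(p+1) = |C_{r-v_\bfm,j}|$, and a short direct computation --- multiplying the two fixing equations by $\tilde m_1, \tilde m_2$ respectively and adding, then using that $j$ is a unit --- shows $\textsf{stab}_{r-v_\bfm}(\tilde{\bfm})$ is trivial. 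An orbit-stabilizer count then forces the orbit to exhaust the entire circle.

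Second, the character identity $e_{p^r}(\bfx \cdot p^{v_\bfm}\tilde{\bfy}) = e_{p^{r-v_\bfm}}(\bfx \cdot \tilde{\bfy})$ shows $(fd\sigma_r)^\vee(\bfx)$ depends only on $\bfx_0 := \bfx \pmod{p^{r-v_\bfm}}$. Writing $\tilde f(\tilde{\bfy}) := f(p^{v_\bfm}\tilde{\bfy})$ and denoting by $d\tilde\sigma_{r-v_\bfm, j}$ the normalized surface measure on $C_{r-v_\bfm, j}$, one checks that $(fd\sigma_r)^\vee(\bfx) = (\tilde f\, d\tilde\sigma_{r-v_\bfm, j})^\vee(\bfx_0)$. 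Since each $\bfx_0$ has $p^{2v_\bfm}$ preimages in $(\bZ/p^r\bZ)^2$,
\[
\sum_{\bfx \in (\bZ/p^r\bZ)^2} |(fd\sigma_r)^\vee(\bfx)|^4 \,=\, p^{2v_\bfm}\sum_{\bfx_0 \in (\bZ/p^{r-v_\bfm}\bZ)^2} |(\tilde f\, d\tilde\sigma_{r-v_\bfm, j})^\vee(\bfx_0)|^4.
\]
Applying Theorem~\ref{resitriction} to $\tilde f$ on $C_{r-v_\bfm, j}$ bounds the right-hand side by $p^{2v_\bfm} \cdot p^{-(r-v_\bfm+1)}\bigl(\sum_{\tilde{\bfy}}|\tilde f(\tilde{\bfy})|^2\bigr)^2 = p^{3v_\bfm - r - 1}\bigl(\sum_\bfy |f(\bfy)|^2\bigr)^2$, and taking the square root yields the claimed exponent $-(r - 3v_\bfm + 1)/2$.

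The main obstacle is the geometric identification of the reduced orbit with a full circle, where the hypothesis $p \equiv 3 \pmod 4$ enters essentially: without it $j$ could fail to be a unit, the stabilizer computation would break down, and the orbit could be a proper subset of the level set $\{\|\bfy\|^2 \equiv j\}$, invalidating the appeal to Theorem~\ref{resitriction}. Once this identification is in place, the rest is clean character-scaling bookkeeping.
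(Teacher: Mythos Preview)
Your proof is correct. The paper takes a slightly different organizational route: it sets up a general additive-energy-to-extension framework (expand the $L^4$ norm, separate the diagonal $\zeta=\bfo$ from $\zeta\neq\bfo$, apply Cauchy--Schwarz with a uniform bound $U$ on $\#\{(\xi,\eta)\in V^2:\xi-\eta=\zeta\}$) and feeds in the energy bounds of Lemma~\ref{energy1} for $V=C_{r,j}$ and Lemma~\ref{energy2} for $V=\textsf{orb}_r(\bfm)$ to obtain Theorems~\ref{resitriction} and~\ref{restriction3} in parallel. Lemma~\ref{energy2} is itself proved by exactly the $p^{v_\bfm}$-rescaling you use, reducing orbit energy to circle energy via Lemma~\ref{lem_staborb}. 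You instead perform the rescaling one level higher, directly on the extension sum, and invoke Theorem~\ref{resitriction} as a black box. Both routes rest on the same geometric input (Lemma~\ref{lem_staborb}: $\textsf{orb}_r(\bfm)=p^{v_\bfm}C_{r-v_\bfm,\|\tilde{\bfm}\|}$ with the map $\tilde{\bfy}\mapsto p^{v_\bfm}\tilde{\bfy}$ a bijection), and yours is marginally more direct since it avoids restating the energy argument for orbits. One cosmetic slip: in the paper's notation $\|\cdot\|$ already denotes the quadratic form $x_1^2+x_2^2$, so you want $j=\|\tilde{\bfm}\|$, not $\|\tilde{\bfm}\|^2$.
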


\begin{theorem}\label{resitriction5}
    Let $p\equiv 1 \, (\mod 4)$ and $r \ge 1$ be an integer. Let $\bfm\in (\bZ/p^r\bZ)^2$ be such that $\bfm\neq \bfo$. Denote by $d\sigma_r$ the normalized surface measure on $\textsf{orb}_r(\bfm)$. Suppose that $\bfm=p^{v_m}\tilde{\bfm}$ with $\tilde{\bfm}\in (\bZ/p^{r-v_m}\bZ)^2$ and {\color{black}$\|\tilde{\bfm}\|\not\equiv 0\,(\mod p)$}. Then
    \[
    \Big(\sum_{x\in (\mathbb{Z}/p^r\mathbb{Z})^2}|(fd\sigma_r)^\vee(x)|^4\Big)^{1/2}\ll  p^{-\frac{\color{black}r-3v_\bfm+1}{2}}\sum_{x\in \textsf{orb}_r(\bfm)}|f(x)|^2.
    \]
\end{theorem}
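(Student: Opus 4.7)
The plan is to reduce Theorem \ref{resitriction5} to the circle extension estimate in Theorem \ref{resitriction} by peeling off the $p^{v_\bfm}$ scale and then identifying $\textsf{orb}_r(\bfm)$ with a full circle of unit radius. The key use of $p\equiv 1\pmod 4$, together with $\|\tilde{\bfm}\|\not\equiv 0\pmod p$, is precisely that the $SO_2(\bZ/p^{r-v_\bfm}\bZ)$-orbit of the primitive vector $\tilde{\bfm}$ fills the entire sphere of radius $\|\tilde{\bfm}\|$.

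First I would perform the scaling $y = p^{v_\bfm}\tilde{y}$. Since every $\theta\bfm$ equals $p^{v_\bfm}\theta\tilde{\bfm}$, the map $\tilde{y}\mapsto p^{v_\bfm}\tilde{y}$ is a bijection from $\textsf{orb}_{r-v_\bfm}(\tilde{\bfm})$ onto $\textsf{orb}_r(\bfm)$, and the identity $e_{p^r}(x\cdot p^{v_\bfm}\tilde{y})=e_{p^{r-v_\bfm}}(x\cdot\tilde{y})$ shows that $(fd\sigma_r)^\vee(x)$ factors through $\bar{x}:=x\bmod p^{r-v_\bfm}$ and equals $(gd\tilde{\sigma}_{r-v_\bfm})^\vee(\bar{x})$, where $g(\tilde{y}):=f(p^{v_\bfm}\tilde{y})$ and $d\tilde{\sigma}_{r-v_\bfm}$ is the normalised surface measure on $\textsf{orb}_{r-v_\bfm}(\tilde{\bfm})$. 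The fourth-power sum over $x\in(\bZ/p^r\bZ)^2$ therefore equals $p^{2v_\bfm}$ times the analogous sum over $\bar{x}\in(\bZ/p^{r-v_\bfm}\bZ)^2$, which reduces the statement to the case $v_\bfm=0$ with $\tilde{\bfm}$ primitive and $\|\tilde{\bfm}\|$ a unit.

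Next I would verify $\textsf{orb}_{r-v_\bfm}(\tilde{\bfm}) = C_{r-v_\bfm,\|\tilde{\bfm}\|}$. The inclusion ``$\subseteq$'' is immediate from $\|\theta\tilde{\bfm}\|=\|\tilde{\bfm}\|$. For cardinalities, since $\|\tilde{\bfm}\|$ is a unit the $2\times 2$ stabiliser matrix with entries $\tilde{m}_1,-\tilde{m}_2,\tilde{m}_2,\tilde{m}_1$ is invertible, so $|\textsf{stab}_{r-v_\bfm}(\tilde{\bfm})|=1$, giving $|\textsf{orb}_{r-v_\bfm}(\tilde{\bfm})|=|G_{r-v_\bfm}|=(p-1)p^{r-v_\bfm-1}$; here the size of $G_{r-v_\bfm}$ comes from $G_1\cong\bF_p^\ast$ via $(a,b)\mapsto a+\i b$ (available because $p\equiv 1\pmod 4$ makes $-1$ a square) together with Hensel lifting. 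Meanwhile, Hensel's lemma applied to $x_1^2+x_2^2-\|\tilde{\bfm}\|$, whose gradient is non-zero on the circle since $\|\tilde{\bfm}\|$ is a unit, gives $|C_{r-v_\bfm,\|\tilde{\bfm}\|}|=p^{r-v_\bfm-1}|C_{1,\|\tilde{\bfm}\|}|=(p-1)p^{r-v_\bfm-1}$, with $|C_{1,\|\tilde{\bfm}\|}|=p-1$ again provided by the factorisation $x_1^2+x_2^2=(x_1+\i x_2)(x_1-\i x_2)$. Equality of cardinalities forces equality of the sets.

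With this identification, $d\tilde{\sigma}_{r-v_\bfm}$ is the normalised surface measure on $C_{r-v_\bfm,\|\tilde{\bfm}\|}$ and Theorem \ref{resitriction}, applied at level $r-v_\bfm$ with radius $\|\tilde{\bfm}\|\in (\bZ/p^{r-v_\bfm}\bZ)^\ast$, gives the bound with exponent $-(r-v_\bfm+1)/2$ on the right-hand side. Multiplying by the $p^{v_\bfm}$ collected from the scaling step yields $p^{v_\bfm-(r-v_\bfm+1)/2}=p^{-(r-3v_\bfm+1)/2}$, the desired constant. The essential obstacle is the orbit-equals-circle identification of the third paragraph; without both hypotheses $p\equiv 1\pmod 4$ and $\|\tilde{\bfm}\|\not\equiv 0\pmod p$, the orbit could be a proper subset of the circle and the sharp, circle-specific Theorem \ref{resitriction} could not be transferred verbatim.
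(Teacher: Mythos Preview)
Your argument is correct, and it takes a genuinely different route from the paper's. The paper proves all three extension theorems (\ref{resitriction}, \ref{restriction3}, \ref{resitriction5}) by the same template in Section~4.3: expand the fourth moment, apply Cauchy--Schwarz, and feed in a uniform additive-energy bound $U$ on $\#\{(\bfx,\bfy)\in V^2:\bfx-\bfy\equiv\zeta\}$ together with $|V|$. For Theorem~\ref{resitriction5} specifically this means invoking Lemma~\ref{energy4} for $U\ll p^{r-v_\bfm-1}$ and Lemma~\ref{lem_staborb1} for $|\textsf{orb}_r(\bfm)|$, then reading off $\frac{p^r(1+U)^{1/2}}{|V|^2}\ll p^{-(r-3v_\bfm+1)/2}$.

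You instead bypass the orbit energy lemma entirely: after the $p^{v_\bfm}$-scaling (which is exactly the content of Lemma~\ref{lem_staborb1}, $\textsf{orb}_r(\bfm)=p^{v_\bfm}\textsf{orb}_{r-v_\bfm}(\tilde{\bfm})$), you observe that the hypotheses $p\equiv 1\pmod 4$ and $\|\tilde{\bfm}\|\in(\bZ/p^{r-v_\bfm}\bZ)^\ast$ force $|\textsf{orb}_{r-v_\bfm}(\tilde{\bfm})|=|G_{r-v_\bfm}|=(p-1)p^{r-v_\bfm-1}=|C_{r-v_\bfm,\|\tilde{\bfm}\|}|$, so the orbit \emph{is} the circle and Theorem~\ref{resitriction} (stated for all odd $p$) applies verbatim. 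This is shorter and makes Lemma~\ref{energy4} unnecessary for this theorem; the trade-off is that it depends on Theorem~\ref{resitriction} already being in hand, whereas the paper's uniform template is self-contained and also covers the $p\equiv 3\pmod 4$ orbit case by the same stroke. Note that your reduction would equally prove Theorem~\ref{restriction3}, since Lemma~\ref{lem_staborb} already records $\textsf{orb}_r(\bfx)=p^{v_\bfx}C_{r-v_\bfx,\|\tilde{\bfx}\|}$ in that setting as well.
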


\subsection{Preliminary lemmas}
To prove these extension theorems, we first need to collect and prove a number of preliminary results. All detailed proofs in this subsection are written under the assumption that $p\equiv 3\, (\mod 4)$. In the case $p\equiv 1\, (\mod 4)$, {\color{black}the arguments are identical but more complicated, and will be omitted in this paper.}

Let $p$ be a prime with $p\equiv 3\,(\mod 4)$, $r\geq 1$ be an integer. For any $l\in \bZ$, one has $(l,p)=1$ if and only if $(l,p^r)=1$. For any $\theta\in G_r$ and $\bfx\in (\bZ/p^r\bZ)^2$, it is not hard to verify that $\|\theta \bfx\| \equiv \|\bfx\| \quad (\mod p^r)$. Moreover, we have $v_{\theta\bfx}\geq v_\bfx$. Since $\theta$ is invertible, one obtains that $v_{\theta\bfx}= v_{\bfx}$.

It is well-known (see \cite{HLR} for example) that
\[
|C_{1,j}|=|G_1| = p+1,\quad (j\in (\bZ/p\bZ)^\ast ),\qquad |C_{1,0}|=1
\]
when $p\equiv 3\,(\mod 4)$. In particular, $C_{1,0}=\{\bfo\}$. For any $\bfx\in \bZ/p^r\bZ$, it follows that $\|\bfx\|\equiv 0\, (\mod p)$, if and only if $\bfx\equiv \bfo\,(\mod p)$, if and only if $v_\bfx>0$. Or equivalently, {\color{black}we have $\|\bfx\|\in (\bZ/p^r\bZ)^\ast$ if and only if $\|\bfx\|\not\equiv 0\,(\mod p)$, if and only if $\bfx\not\equiv \bfo\,(\mod p)$, if and only if $v_\bfx=0$.}


\begin{lemma} \label{lem_car_Gr}
{\color{black}Let $p\equiv 3\,(\mod 4)$ and $r\geq 1$.} Then $|G_r| = p^r(1+1/p)$.
\end{lemma}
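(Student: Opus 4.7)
The plan is to reduce the count of $G_r$ to the already-recorded count $|C_{1,1}|=p+1$ via Hensel lifting. Identifying an element of $G_r$ with a pair $(a,b)\in(\bZ/p^r\bZ)^2$ satisfying $a^2+b^2\equiv 1\,(\mod p^r)$, we have $G_r\cong C_{2,r,1}$. So it suffices to show that $|C_{2,r,1}|=p^{r-1}(p+1)$.

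First I would reduce mod $p$: since $p\equiv 3\,(\mod 4)$, the fact recalled just above the lemma gives $|C_{1,1}|=p+1$. So there are exactly $p+1$ residue classes $(a_0,b_0)\,(\mod p)$ with $a_0^2+b_0^2\equiv 1\,(\mod p)$. Next I would lift each such class to $\bZ/p^r\bZ$. For the polynomial $G(x,y)=x^2+y^2-1$, one has $(\nabla G)(a_0,b_0)=(2a_0,2b_0)$. Since $p$ is odd and $(a_0,b_0)\not\equiv(0,0)\,(\mod p)$ (because $0^2+0^2\not\equiv 1$), the Jacobian has rank $1$ modulo $p$. Hensel's lemma (Lemma 2.2 with $m=1$, $l=1$, $k=r-1$, $n=2$, $R=1$) then guarantees that each mod-$p$ solution lifts to exactly $p^{(r-1)(2-1)}=p^{r-1}$ solutions modulo $p^r$.

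Combining the two steps gives
\[
|G_r|=|C_{2,r,1}|=(p+1)\cdot p^{r-1}=p^r\!\left(1+\tfrac{1}{p}\right),
\]
as desired. There is no real obstacle here: the only points to check are that $G_r$ is genuinely parametrized by solutions of $a^2+b^2\equiv 1\,(\mod p^r)$ (immediate from the definition) and that the gradient condition for Hensel's lemma is nonvanishing on the relevant fibre (automatic since $p$ is odd and the point $(0,0)$ is not a solution). The argument is parallel to the standard count of $|S_{r,j}|$ used in the proof of Theorem \ref{main}.
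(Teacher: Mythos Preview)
Your proof is correct and follows essentially the same approach as the paper: identify $G_r$ with solutions of $a^2+b^2\equiv 1\pmod{p^r}$, use the base count $|G_1|=p+1$, and lift via Hensel's lemma using that $(2a_0,2b_0)\not\equiv\bfo\pmod p$ on the fibre. The paper's argument is identical in substance, differing only in that it phrases the count directly in terms of $G_r$ rather than explicitly invoking the notation $C_{2,r,1}$.
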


\begin{proof}
For $r=1$, we have $|G_1|=p+1$. Now consider the circumstances that $r\geq 2$. For any $\theta=\begin{bmatrix}a & -b\\b &a\end{bmatrix} \in G_r$, there is  some $\theta_0 = \begin{bmatrix}a_0 & -b_0\\b_0 &a_0\end{bmatrix}\in G_1$ such that $\theta\equiv \theta_0\,(\mod p)$. Applying Lemma \ref{lem_Hensel} to $(a_0,b_0)$ and the polynomial $F(x,y)=x^2+y^2-1$. Then $(\nabla F)(a_0,b_0) = (2a_0, 2b_0) \not\equiv \bfo \,(\mod p)$, since $a_0^2+b_0^2\equiv 1\,(\mod p)$. Thus,
\[
\#\left\{(z_1,z_2)\,(\mod p^{r-1}):\, (a_0+pz_1)^2+(b_0+pz_2)^2 \equiv 1\,(\mod p^r)\right\} = p^{r-1}.
\]
It follows that
\[
|G_r|=p^{r-1}|G_1|=p^{r}(1+1/p). 
\]
\end{proof}

\begin{lemma} \label{lem_staborb}
{\color{black}Let $p\equiv 3\,(\mod 4)$ and $r\geq 1$.} For any $\bfo\neq \bfx\in (\bZ/p^r\bZ)^2$, let $\bfx=p^{v_\bfx}\tilde{\bfx}$ for some $\tilde{\bfx}\in (\bZ/p^{r-v_\bfx}\bZ)^2$ with $v_{\tilde{\bfx}}=0$. Then
\[
\textsf{orb}_r(\bfx) =  \big\{p^{v_\bfx} \theta_0\tilde{\bfx}:\, \theta_0 \in G_{r-v_\bfx}\big\} = p^{v_\bfx}C_{r-v_\bfx,\|\tilde{\bfx}\|}.
\]
And
\[
|\textsf{stab}_r(\bfx)| = p^{v_\bfx},\qquad |\textsf{orb}_r(\bfx)| =  p^{r-v_\bfx}(1+1/p).
\]
\end{lemma}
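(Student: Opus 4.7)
The approach is to (i) describe $\textsf{orb}_r(\bfx)$ via the action of $G_{r-v_\bfx}$ on $\tilde{\bfx}$, (ii) identify $\textsf{stab}_r(\bfx)$ with the kernel of the reduction $G_r\to G_{r-v_\bfx}$, and (iii) conclude by the orbit-stabilizer theorem together with a Hensel count of the circle $C_{r-v_\bfx,\|\tilde{\bfx}\|}$. The starting observation is that $\theta\bfx=p^{v_\bfx}(\theta\tilde{\bfx})$ in $(\bZ/p^r\bZ)^2$, so only the class of $\theta$ modulo $p^{r-v_\bfx}$ affects the result. The Hensel-lifting argument from the proof of Lemma \ref{lem_car_Gr} (applied to $F(x,y)=x^2+y^2-1$) gives surjectivity of $G_r\to G_{r-v_\bfx}$, whence $\textsf{orb}_r(\bfx)=p^{v_\bfx}\{\theta_0\tilde{\bfx}:\theta_0\in G_{r-v_\bfx}\}$. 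Since $SO_2$ preserves $\|\cdot\|$, this set is contained in $p^{v_\bfx}C_{r-v_\bfx,\|\tilde{\bfx}\|}$; the reverse inclusion will come out of a cardinality match.

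The core step is the stabilizer computation. I would rewrite $\theta\bfx\equiv\bfx\,(\mod p^r)$ as $(\theta-I)\tilde{\bfx}\equiv\bfo\,(\mod p^{r-v_\bfx})$, and, writing $\theta=\begin{bmatrix}a&-b\\b&a\end{bmatrix}$ with $a^2+b^2\equiv 1$, eliminate (using the identity $(a-1)^2+b^2=2(1-a)$) to get $2(1-a)\tilde{x}_i\equiv 0\,(\mod p^{r-v_\bfx})$ for $i=1,2$. Because $p$ is odd and $v_{\tilde{\bfx}}=0$ forces some $\tilde{x}_i$ to be a unit, this yields $a\equiv 1\,(\mod p^{r-v_\bfx})$; substituting back into the original system and using the unit coordinate again gives $b\equiv 0\,(\mod p^{r-v_\bfx})$. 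Hence $\textsf{stab}_r(\bfx)$ is exactly $\ker(G_r\to G_{r-v_\bfx})$, which by Lemma \ref{lem_car_Gr} has order $|G_r|/|G_{r-v_\bfx}|=p^{v_\bfx}$.

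The orbit-stabilizer theorem then gives $|\textsf{orb}_r(\bfx)|=p^{r-v_\bfx}(1+1/p)$. A direct Hensel lift for $x^2+y^2\equiv\|\tilde{\bfx}\|\,(\mod p^{r-v_\bfx})$ (whose gradient $(2x,2y)$ is nonzero mod $p$ on the variety, because $\|\tilde{\bfx}\|$ is a unit in $\bZ/p^{r-v_\bfx}\bZ$) gives $|C_{r-v_\bfx,\|\tilde{\bfx}\|}|=p^{r-v_\bfx}(1+1/p)$ as well, so the inclusion of the first paragraph is forced to be an equality by matching cardinalities. The main obstacle I anticipate is the stabilizer computation: one must carefully track the distinct moduli ($p^r$ for $\theta$ versus $p^{r-v_\bfx}$ for the stabilizing condition) and invoke both the oddness of $p$ and the assumption $v_{\tilde{\bfx}}=0$ at precisely the right moments, so as not to lose powers of $p$; beyond that, the argument reduces to a routine orbit-stabilizer count combined with Hensel-lifting of the circle.
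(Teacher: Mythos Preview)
Your proposal is correct and follows essentially the same route as the paper: reduce the stabilizer condition to $(\theta-I)\tilde{\bfx}\equiv\bfo\pmod{p^{r-v_\bfx}}$, show this forces $(a,b)\equiv(1,0)\pmod{p^{r-v_\bfx}}$, then apply orbit--stabilizer together with $|G_r|=p^r(1+1/p)$ and a Hensel count of $C_{r-v_\bfx,\|\tilde{\bfx}\|}$. The only minor difference is in the linear-algebra step: the paper inverts the coefficient matrix $\begin{bmatrix}\tilde{x}_1&-\tilde{x}_2\\\tilde{x}_2&\tilde{x}_1\end{bmatrix}$ (whose determinant $\|\tilde{\bfx}\|$ is a unit because $p\equiv 3\pmod 4$), whereas you use the adjugate of $\theta-I$ and the identity $\det(\theta-I)=2(1-a)$ to first extract $a\equiv 1$ and then $b\equiv 0$; your variant happens not to need $p\equiv 3\pmod 4$ at that particular step, but the hypothesis is of course still used later to ensure $\|\tilde{\bfx}\|$ is a unit and $|C_{1,\|\tilde{\bfx}\|}|=p+1$.
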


\begin{proof}
Write $v=v_\bfx$ for simplicity. Here $0\leq v\leq r-1$. Let $\tilde{\bfx}=(\tilde{x_1},\tilde{x_2})$. The equation $\theta \bfx \equiv \bfx\, (\mod p^r)$ is equivalent to
\begin{equation} \label{eq_rot1}
\begin{bmatrix}
a-1 &-b\\
b & a-1
\end{bmatrix}
\begin{bmatrix}
p^v\tilde{x_1}\\
p^v\tilde{x_2}
\end{bmatrix}\equiv
\begin{bmatrix}
0\\
0
\end{bmatrix}\quad (\mod p^r),
\end{equation}
or equivalently,
\begin{equation} \label{eq_rot2}
\begin{bmatrix}
\tilde{x_1} &-\tilde{x_2}\\
\tilde{x_2} & \tilde{x_1}
\end{bmatrix}
\begin{bmatrix}
a-1\\
b
\end{bmatrix}\equiv
\begin{bmatrix}
0\\
0
\end{bmatrix}\quad (\mod p^{r-v}).
\end{equation}
Since $v_{\tilde{\bfx}}=0$, one has $\tilde{x_1}^2+\tilde{x_2}^2 \not\equiv 0\,(\mod p)$ (recalling that $p\equiv 3\,(\mod 4)$). The coefficient matrix is invertible and
\[
\begin{bmatrix}
a\\
b
\end{bmatrix}\equiv
\begin{bmatrix}
1\\
0
\end{bmatrix},\quad (\mod p^{r-v}).
\]
By Lemma \ref{lem_Hensel}, the number of $(a,b)\in (\bZ/p^r \bZ)^2$ satisfying the above equivalence  and $a^2+b^2\equiv 1\,(\mod p^r)$ is exactly $p^v$. So
\[
\big|\textsf{stab}_r(\bfx)\big| = p^v.
\]
It then follows that $|\textsf{orb}_r(\bfx)| = |G_r|/|\textsf{stab}_r(\bfx)| = p^{r-v}(1+1/p)$.

Moreover, for any $\theta\in G_r$, there is some $\theta_0\in G_{r-v}$ such that $\theta \equiv \theta_0\,(\mod p^{r-v})$. It can be verified that $\theta \tilde{\bfx} \equiv \theta_0 \tilde{\bfx}\, (\mod p^{r-v})$. So
\[
\textsf{orb}_r(\bfx) = \big\{\theta (p^v \tilde{\bfx}):\, \theta \in G_r\big\} = \big\{p^v \theta_0\tilde{\bfx}:\, \theta_0 \in G_{r-v}\big\}.
\]
Note that $|G_{r-v}|=p^{r-v}(1+1/p)$ by Lemma \ref{lem_car_Gr}, the elements on the right-hand side of above formula give different members of the orbit. Furthermore, one has $\|\tilde{\bfx}\|\in (\bZ/p^{r-v}\bZ)^\ast$, since $v_{\tilde{\bfx}}=0$ and $p\equiv 3\,(\mod 4)$. We have $\{\theta_0\tilde{\bfx}:\, \theta_0\in G_{r-v}\}=C_{r-v,\|\tilde{\bfx}\|}$. The proof is completed.
\end{proof}

\begin{lemma}\label{circle-size}
{\color{black}Let $p\equiv 3\,(\mod 4)$ and $r\geq 1$.} Let $j$ be an integer with $\text{ord}_p(j)=v$. 
Then
\[
|C_{r,j}| =
\begin{cases}
p^{r}(1+1/p),\quad &\text{if }0\leq v <r \text{ and }v\text{ is even},\\
p^{2\lfloor r/2 \rfloor},\quad &\text{if }v=r,\\
0,\quad &\text{otherwise}.
\end{cases}
\]
More concretely, we have
\[
C_{r,j} =
\begin{cases}
\left\{p^u \bfz+p^{r-u}\bfw:\,\bfz\in C_{r-2u,\tilde{j}},\, \bfw \in \bZ/p^u\bZ\right\},\quad &\text{if }j=p^{2u}\tilde{j},\, 0\leq u\leq \lfloor \frac{r-1}{2} \rfloor,\, \tilde{j}\in (\bZ/p^{r-2u}\bZ)^\ast\\
\left\{p^{\lceil r/2 \rceil} \bfw:\, \bfw \in \bZ/p^{\lfloor r/2 \rfloor}\bZ\right\},&\text{if }j=0,\\
\emptyset, &\text{otherwise}.
\end{cases}
\]
\end{lemma}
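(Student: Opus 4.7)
My plan is to split into cases on $v = \ord_p(j)$, driven by a key valuation identity about the squared norm modulo $p^r$.

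First I would establish that $\ord_p(\|\bfx\|) = \min\{2 v_\bfx,\, r\}$ for any $\bfx \in (\bZ/p^r\bZ)^2$. The argument uses that $p \equiv 3 \pmod 4$, so $-1$ is a non-square mod $p$, which forces any $\tilde{\bfx} \in (\bZ/p^s\bZ)^2$ with $v_{\tilde{\bfx}} = 0$ to satisfy $\|\tilde{\bfx}\| \not\equiv 0 \pmod p$. Writing $\bfx = p^{v_\bfx}\tilde{\bfx}$ then yields $\|\bfx\| = p^{2 v_\bfx}\|\tilde{\bfx}\|$, giving the identity. This immediately shows that $C_{r,j} = \emptyset$ when $v$ is odd and $v<r$, leaving only the three nontrivial cases below.

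For the unit case $v=0$, I would apply Hensel's lemma (Lemma \ref{lem_Hensel}) to $F(x_1,x_2) = x_1^2+x_2^2 - j$: at any $\bfx_0 \in C_{1,\, j\bmod p}$ the gradient $(2x_1,2x_2)$ is nonzero mod $p$ (since $p$ is odd and $\bfx_0 \neq \bfo$), so each mod-$p$ solution lifts to exactly $p^{r-1}$ solutions mod $p^r$. Combined with the identity $|C_{1,\, j\bmod p}| = p+1$ recalled at the start of this subsection, this gives $|C_{r,j}| = (p+1) p^{r-1} = p^r(1+1/p)$. For even $v = 2u$ with $0 < v < r$, the valuation identity forces $v_\bfx = u$ exactly, so each $\bfx \in C_{r,j}$ admits a unique decomposition $\bfx = p^u \bfz_0 + p^{r-u}\bfw$ with $\bfz_0 \in (\bZ/p^{r-2u}\bZ)^2$ and $\bfw \in (\bZ/p^u\bZ)^2$. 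A direct computation gives $\|\bfx\| \equiv p^{2u}\|\bfz_0\| \pmod{p^r}$, reducing the condition to $\bfz_0 \in C_{r-2u,\tilde{j}}$, and the unit case then yields both the explicit description and the count $p^{2u}\cdot p^{r-2u}(1+1/p) = p^r(1+1/p)$. For the zero case $j \equiv 0 \pmod{p^r}$, the valuation identity gives $\|\bfx\| \equiv 0 \pmod{p^r}$ iff $v_\bfx \geq \lceil r/2 \rceil$, iff $\bfx = p^{\lceil r/2 \rceil}\bfw$ for some $\bfw \in (\bZ/p^{\lfloor r/2 \rfloor}\bZ)^2$, giving $|C_{r,0}| = p^{2 \lfloor r/2 \rfloor}$.

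The substantive ingredient is the valuation identity $\ord_p(\|\bfx\|) = \min\{2 v_\bfx,\, r\}$, which crucially uses $p \equiv 3 \pmod 4$; the rest is bookkeeping with moduli, in particular verifying that $(\bfz_0, \bfw) \mapsto p^u \bfz_0 + p^{r-u}\bfw$ is a bijection from $(\bZ/p^{r-2u}\bZ)^2 \times (\bZ/p^u\bZ)^2$ onto $\{\bfx \in (\bZ/p^r\bZ)^2 : v_\bfx \geq u\}$. No real obstacle is expected.
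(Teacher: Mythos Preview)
Your proposal is correct and cleaner in organization than the paper's own argument. The paper proceeds inductively: after handling the unit case $v=0$ via Hensel's lemma (as you do), it treats $C_{r,0}$ and $C_{r,p^v\tilde j}$ by peeling off one factor of $p$ at a time, establishing recursions $|C_{r,0}|=p^2|C_{r-2,0}|$ and $|C_{r,p^v\tilde j}|=p^2|C_{r-2,p^{v-2}\tilde j}|$ and then unwinding. You instead front-load the work into the single valuation identity $\ord_p(\|\bfx\|)=\min\{2v_\bfx,r\}$, which the paper's preliminary remarks essentially contain (the equivalence $\|\bfx\|\equiv 0\ (\mathrm{mod}\ p)\Leftrightarrow v_\bfx>0$) but never state in this sharp form. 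Your identity immediately disposes of the odd-$v$ case and pins down $v_\bfx$ exactly in the remaining cases, so the decomposition $\bfx=p^u\bfz_0+p^{r-u}\bfw$ can be done in one shot rather than built up recursively. The gain is brevity and a clearer picture of why the answer has the shape it does; the paper's inductive route is more hands-on but needs a bit more bookkeeping to verify the base cases and the concrete form of $C_{r,j}$ at the end.
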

\begin{proof}

We first consider the case $j\in (\bZ/p^r\bZ)^\ast$. By applying Lemma \ref{lem_Hensel}, similar arguments as previous show that 
\[
|C_{r,j}| = p^{r-1}|C_{1,j}|=p^r(1+1/p).
\]


Next, consider the circle $C_{r,0}$ $(r\geq 2)$. Since $C_{1,0}=\{\bfo\}$, we can write
\[
C_{r,0} = \{p\bfy:\, \bfy\in C'_{r,0}\}, \quad C'_{r,0}=\big\{\bfy\,(\mod p^{r-1}):\, \|p\bfy\|\equiv 0\,(\mod p^r)\big\}.
\]
When $r=2$, one sees that $C_{2,0} = \{p\bfy:\, \bfy \in (\bZ/pZ)^2\}$ and $|C_{2,0}|=p^2$. When $r\geq 3$, we further write $\bfy = \bfz+p^{r-2}\bfw$ with $\bfz\in (\bZ/p^{r-2}\bZ)^2$ and $\bfw\in (\bZ/p\bZ)^2$. The condition $\|p\bfy\|\equiv \bfo\,(\mod p^r)$ holds if and only if $\|\bfz+p^{r-2}\bfw\|=\|\bfy\|\equiv \bfo\,(\mod p^{r-2})$, if and only if $\|\bfz\|\equiv \bfo\,(\mod p^{r-2})$.
It follows that
\[
C_{r,0} = \big\{p(\bfz+p^{r-2}\bfw):\, \bfz\in C_{r-2,0},\, \bfw\in (\bZ/p\bZ)^2\big\},
\]
and $|C_{r,0}| = p^2 |C_{r-2,0}|$.

By induction, we conclude that
\[
C_{r,0} = \left\{p^{\lceil r/2 \rceil} \bfw:\, \bfw \in \bZ/p^{\lfloor r/2 \rfloor}\bZ\right\}
\]
and $|C_{r,0}| = p^{2\lfloor r/2 \rfloor}$.

Now, let us consider the case $j=p^v \tilde{j}$, where $1\leq v\leq r-1$ and $\tilde{j}\in (\bZ/p^{r-v}\bZ)^\ast$. For any $\bfx\in C_{r,j}$, it satisfies that $\|\bfx\|\equiv 0\,(\mod p)$. Recalling that $C_{1,0}=\{\bfo\}$, one has $\bfx\equiv \bfo\,(\mod p)$. So
\[
C_{r,j} = \{p\bfy:\, \bfy\in C'_{r,j}\}, \quad C'_{r,j}=\big\{\bfy\,(\mod p^{r-1}):\, \|p\bfy\|\equiv p^{v} \tilde{j}\,(\mod p^r)\big\}.
\]
When $v=1$, it is easy to see that $C'_{r,j}=\emptyset$. When $v\geq 2$, which means $r\geq 3$, similar arguments as previous show that
\[
C_{r,p^v\tilde{j}} = \big\{p(\bfz+p^{r-2}\bfw):\, \bfz\in C_{r-2,p^{v-2}\tilde{j}},\, \bfw\in (\bZ/p\bZ)^2\big\},
\]
and $|C_{r,p^v\tilde{j}}| = p^2 \cdot |C_{r-2,p^{v-2}\tilde{j}}|$. By induction, it follows that
\[
|C_{r,p^v\tilde{j}}| =
\begin{cases}
p^r(1+1/p),\quad &\text{if } v \text{ is even},\\
0,\quad &\text{if } v \text{ is odd}
\end{cases}
\]
when $1\leq v \leq r-1$.  Indeed,
\begin{equation} \label{eq_concreteform}
C_{r,p^{2u}\tilde{j}} = \left\{p^u \bfz+p^{r-u}\bfw:\,\bfz\in C_{r-2u,\tilde{j}},\, \bfw \in (\bZ/p^u\bZ)^2\right\}
\end{equation}
when $r\geq 3$ and $1\leq u\leq (r-1)/2$.
\end{proof}

\paragraph{Remark:} When $p\equiv 3\,(\mod 4)$, we have the disjoint union
\[
(\bZ/p^r\bZ)^2 = \left(\bigcup\limits_{u=0}^{\lfloor (r-1)/2\rfloor} \bigcup\limits_{\tilde{j}\in (\bZ/p^{r-2u}\bZ)^\ast} C_{r,p^{2u}\tilde{j}}\right)\,\cup \, C_{r,0},
\]
whose cardinalities give
\begin{align*}
p^{2r} &= \sum\limits_{u=0}^{\lfloor (r-1)/2\rfloor} p^{r}(1+1/p)\cdot p^{r-2u}(1-1/p) + p^{2\lfloor r/2 \rfloor}\\
& = \sum\limits_{u=0}^{\lfloor (r-1)/2\rfloor} (p^{2r-2u}-p^{2r-2u-2}) + p^{2\lfloor r/2 \rfloor}.
\end{align*}

\begin{lemma}\label{energy1} \color{black}
{\color{black}Let $p\equiv 3\,(\mod 4)$ and $r\geq 1$.} Let $j\in (\bZ/p^r\bZ)^\ast$. For any $\bfo\neq \bfz\in (\bZ/p^r\bZ)^2$, we have
\[
\#\big\{(\bfx,\bfy)\in C_{r,j}^2:\, \bfx-\bfy\equiv \bfz\,(\mod p^r)\big\}\, \leq \,
2p^{r-1}.
\]
\end{lemma}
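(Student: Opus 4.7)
The plan is to apply a midpoint translation and then an orthogonal decomposition to reduce the incidence count to counting square roots modulo $p^r$. Set $\bfw = \bfx - \bfz/2$, which is valid since $p$ is odd and hence $2$ is invertible modulo $p^r$. Using the identity $\|\bfw \pm \bfz/2\| = \|\bfw\| \pm \bfw \cdot \bfz + \|\bfz\|/4$, the two conditions $\|\bfx\| \equiv j$ and $\|\bfx - \bfz\| \equiv j \pmod{p^r}$ are equivalent, by taking their sum and difference, to
\[
\bfw \cdot \bfz \equiv 0 \pmod{p^r}, \qquad \|\bfw\| \equiv j - \tfrac{\|\bfz\|}{4} \pmod{p^r}.
\]
So it suffices to count $\bfw \in (\bZ/p^r\bZ)^2$ satisfying both constraints.

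Next, I parametrize the linear constraint. Write $\bfz = p^v \tilde{\bfz}$ with $v_{\tilde{\bfz}} = 0$ and $0 \leq v \leq r-1$, and set $\tilde{\bfz}^\perp := (-\tilde{z}_2, \tilde{z}_1)$. Since $p \equiv 3 \pmod 4$ and $v_{\tilde{\bfz}} = 0$, the norm $\|\tilde{\bfz}\|$ is a unit modulo $p$, so $\{\tilde{\bfz}^\perp, \tilde{\bfz}\}$ forms a $\bZ/p^r\bZ$-basis of $(\bZ/p^r\bZ)^2$. Writing $\bfw = a\tilde{\bfz}^\perp + b'\tilde{\bfz}$ and using $\tilde{\bfz}^\perp \cdot \tilde{\bfz} = 0$, the constraint $\bfw \cdot \bfz \equiv 0 \pmod{p^r}$ becomes $b' \equiv 0 \pmod{p^{r-v}}$, so $b' = p^{r-v} b$ with $b \in \bZ/p^v\bZ$. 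Combining $\tilde{\bfz}^\perp \cdot \tilde{\bfz} = 0$ with $\|\tilde{\bfz}^\perp\| = \|\tilde{\bfz}\|$ yields $\|\bfw\| = \|\tilde{\bfz}\|\bigl(a^2 + p^{2(r-v)} b^2\bigr)$. Dividing through by the unit $\|\tilde{\bfz}\|$, the problem reduces to counting pairs $(a, b) \in \bZ/p^r\bZ \times \bZ/p^v\bZ$ satisfying
\[
a^2 + p^{2(r-v)} b^2 \equiv c \pmod{p^r}, \qquad c := \bigl(j - \tfrac{\|\bfz\|}{4}\bigr)\|\tilde{\bfz}\|^{-1}.
\]

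For each fixed $b$, I bound the number of admissible $a$ by the standard fact that $a^2 \equiv d \pmod{p^r}$ has at most $2$ solutions when $d$ is a unit modulo $p$, and at most $2 p^{\lfloor r/2 \rfloor}$ solutions in general. If $v = 0$, then $b$ is forced to vanish and one obtains at most $2p^{\lfloor r/2 \rfloor} \leq 2p^{r-1}$ solutions. If $v \geq 1$, then $\|\bfz\|/4 = p^{2v}\|\tilde{\bfz}\|/4$ is divisible by $p$, so $c \equiv j\|\tilde{\bfz}\|^{-1} \pmod p$ is a unit; moreover $r-v \geq 1$, so $p^{2(r-v)} b^2$ is divisible by $p$, and hence $c - p^{2(r-v)} b^2$ remains a unit modulo $p$ for every $b$. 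Each $b$ then contributes at most $2$ values of $a$, for a total of at most $2 p^v \leq 2 p^{r-1}$.

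The main technical step is the orthogonal change of coordinates, which relies on $\|\tilde{\bfz}\|$ being a unit modulo $p$; this is exactly where the assumption $p \equiv 3 \pmod 4$ enters, since otherwise $\|\tilde{\bfz}\|$ could vanish modulo $p$ even when $v_{\tilde{\bfz}} = 0$ and the coordinate change would be singular. Once the decomposition is available, the rest is an elementary square-root count in $\bZ/p^r\bZ$.
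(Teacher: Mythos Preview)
Your proof is correct and takes a genuinely different route from the paper.

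The paper argues directly on the four-variable system $\|\bfx\|=\|\bfy\|=j$, $\bfx-\bfy=\bfz$. It splits into the cases $\bfz\not\equiv\bfo\pmod p$ and $\bfz\equiv\bfo\pmod p$; in each case it eliminates variables to find at most two solutions modulo $p$, checks that the Jacobian of the system has rank at least $3$ modulo $p$, and then invokes the paper's Hensel lemma to lift, getting at most $2p^{r-1}$ solutions modulo $p^r$.

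Your argument is more geometric: the midpoint substitution $\bfw=\bfx-\bfz/2$ decouples the two circle equations into an orthogonality constraint $\bfw\cdot\bfz\equiv 0$ and a single norm constraint, and the basis $\{\tilde{\bfz}^\perp,\tilde{\bfz}\}$ (invertible precisely because $\|\tilde{\bfz}\|$ is a unit when $p\equiv 3\pmod 4$) reduces everything to a square-root count in $\bZ/p^r\bZ$. This avoids the explicit Jacobian computation and the somewhat ad hoc case split of the paper, at the price of needing the (standard) bound on the number of square roots of an arbitrary element modulo $p^r$. Either approach uses $p\equiv 3\pmod 4$ in essentially the same place---to ensure a nonzero reduced vector has unit norm---but yours isolates that dependence more transparently.
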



\begin{proof}

The set on the left-hand side involves system of congruences
\begin{equation} \label{eq_sys}
\begin{cases}
x_1^2+x_2^2 \equiv j,\\
y_1^2+y_2^2 \equiv j,\\
x_1-y_1\equiv z_1,\\
x_2-y_2\equiv z_2.
\end{cases}
\end{equation}
Considering the system \eqref{eq_sys} modulo $p$, one sees that
\[
2y_1z_1+2y_2z_2+z_1^2+z_2^2\equiv 0\,(\mod p).
\]

First, we consider the case $\bfz\not\equiv \bfo\,(\mod p)$. Assume without loss of generality that $z_1\not\equiv 0\, (\mod p)$. Inserting $y_1\equiv -(2z_1)^{-1}(2y_2z_2+z_1^2+z_2^2)\, (\mod p)$ into $y_1^2+y_2^2\equiv j\,(\mod p)$, one obtains
\[
z_1^{-2}(z_1^2+z_2^2)y_2^2+z_1^{-2}z_2(z_1^2+z_2^2)y_2+4^{-1}z_1^{-2}(z_1^2+z_2^2)^2\equiv j\quad (\mod p).
\]
When $p\equiv 3\,(\mod 4)$, $\bfz\not\equiv \bfo\,(\mod p)$ implies that $z_1^2+z_2^2\not\equiv 0\, (\mod p)$. The left-hand side of above congruence has degree $2$. So there are at most two solutions in $y_2$. Moreover, each choice of $y_2$ exactly determines the choices of $x_1$, $y_1$ and $x_2$.


Now we apply Hensel's lemma. The Jacobian matrix, in $(x_1,y_1,x_2,y_2)$, is given by
\[
\begin{bmatrix}
2x_1 & 0 & 2x_2 & 0\\
0 &2y_1 & 0 & 2y_2\\
1 & -1 & 0 & 0\\
0 & 0 &1 &-1
\end{bmatrix}.
\]
By elementary operations, we obtain
\[
\begin{bmatrix}
1 & 0 & -1 & 0\\
0 &1 & 0 & -1\\
0 & 0 & x_1 & x_2\\
0 & 0 &y_1 &y_2
\end{bmatrix}.
\]
Since $j\not\equiv 0\,(\mod p)$, it satisfies that $\bfx,\bfy\not\equiv \bfo\,(\mod p)$ and the rank modulo $p$ is at least $3$. Then the conclusion follows from Lemma \ref{lem_Hensel}.

Second, let us deal with the case $\bfz\equiv \bfo$ $(\mod p)$. Assume that $\bfz=p^k\tilde{\bfz}$ with $k=v_\bfz$ and $\tilde{\bfz}\in (\bZ/p^{r-k}\bZ)^2$. Here $1\leq k<r$, since $\bfz\neq \bfo$. 
Note that, for any solution $(\bfx,\bfy)$ to the system \eqref{eq_sys} modulo $p^r$, it also satisfies \eqref{eq_sys} modulo $p^k$ or $p^{k+1}$. For the former situation, one deduces that $\bfx\equiv \bfy\,(\mod p^k)$ since $\bfz\equiv \bfo\,(\mod p^k)$. For the latter situation, similar arguments as previous shows that
\[
(y_1+p^k\tilde{z_1})^2+(y_2+p^k\tilde{z_2})^2 \equiv j\equiv y_1^2+y_2^2,\,(\mod p^{k+1}),
\]
i.e., $2y_1\tilde{z_1}+2y_2\tilde{z_2}\equiv 0\,(\mod p)$. Noting that {\color{black}$v_{\tilde{\bfz}}=0$, we assume without loss of generality that $\tilde{z_1}\not\equiv 0\,(\mod p)$.} Then
\[
x_1\equiv y_1\equiv y_2\tilde{z_2}\tilde{z_1}^{-1}\equiv x_2\tilde{z_2}\tilde{z_1}^{-1}, \quad (\mod p).
\]
Now any solution $(\bfx,\bfy)$ to the system \eqref{eq_sys} modulo $p^r$ satisfies that
\[
\begin{cases}
x_1^2+x_2^2\equiv j,\\
x_1\equiv x_2\tilde{z_2}\tilde{z_1}^{-1},\\
x_1\equiv y_1,\\
x_2\equiv y_2,
\end{cases}\quad (\mod p).
\]
It is not hard to see that there are at most $2$ solutions modulo $p$. Now we lift these solutions modulo $p$ to solutions to \eqref{eq_sys} modulo $p^r$. By applying Hensel's lemma with the rank of Jacobian matrix being $3$, we conclude that
\[
\#\big\{(\bfx,\bfy)\in C_{r,j}^2:\, \bfx-\bfy\equiv \bfz\,(\mod p^r)\big\} \leq 2 p^{r-1}.
\]
\end{proof}

\begin{lemma}\label{energy2}
{\color{black}Let $p\equiv 3\,(\mod 4)$ and $r\geq 1$.} Let $\bfm,\bfz\in (\bZ/p^r\bZ)^2$ be such that $\bfm,\bfz\neq \bfo$. 
Then
\[
\#\big\{(\bfx,\bfy)\in (\textsf{orb}_r(\bfm))^2:\, \bfx-\bfy\equiv \bfz\,(\mod p^r)\big\}\, \leq\, 2p^{r-v_\bfm-1}.
\]
\end{lemma}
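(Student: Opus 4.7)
The plan is to reduce the orbit-energy bound to the circle-energy bound of Lemma \ref{energy1}. Write $v = v_{\bfm}$ and $\bfm = p^v \tilde{\bfm}$ with $v_{\tilde{\bfm}} = 0$. Since $p\equiv 3\,(\mod 4)$ and $v_{\tilde{\bfm}}=0$, the norm $\tilde{j} := \|\tilde{\bfm}\|$ is a unit in $\bZ/p^{r-v}\bZ$. By Lemma \ref{lem_staborb}, every element of $\textsf{orb}_r(\bfm)$ has the unique form $p^v \bfx'$ where $\bfx'$ ranges over $C_{r-v,\tilde{j}}$ (viewed modulo $p^{r-v}$). Note $v\leq r-1$ since $\bfm\neq \bfo$, so $r-v\geq 1$ and Lemma \ref{energy1} is applicable with parameter $r-v$.

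Parametrize pairs $(\bfx,\bfy)\in \textsf{orb}_r(\bfm)^2$ as $\bfx = p^v\bfx'$, $\bfy = p^v\bfy'$ with $\bfx',\bfy'\in C_{r-v,\tilde{j}}$. The equation $\bfx - \bfy \equiv \bfz\,(\mod p^r)$ becomes
\[
p^v(\bfx' - \bfy') \equiv \bfz \quad (\mod p^r).
\]
If $v_\bfz < v$, this has no solutions, giving a count of $0\leq 2p^{r-v-1}$. Otherwise $v_\bfz \geq v$ and we may write $\bfz = p^v \bfz'$ with a well-defined $\bfz' \in (\bZ/p^{r-v}\bZ)^2$, and the condition reduces to
\[
\bfx' - \bfy' \equiv \bfz' \quad (\mod p^{r-v}).
\]
Since $\bfz\not\equiv \bfo\,(\mod p^r)$, we have $\bfz'\not\equiv \bfo\,(\mod p^{r-v})$.

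Now apply Lemma \ref{energy1} with the pair $(r,j)$ replaced by $(r-v,\tilde{j})$: since $\tilde{j}\in (\bZ/p^{r-v}\bZ)^\ast$ and $\bfz'$ is a nonzero vector in $(\bZ/p^{r-v}\bZ)^2$, we obtain
\[
\#\{(\bfx',\bfy')\in C_{r-v,\tilde{j}}^2 : \bfx'-\bfy' \equiv \bfz'\,(\mod p^{r-v})\} \leq 2p^{r-v-1},
\]
and this count equals the cardinality in the statement. There is no real obstacle here, since all the structural work on orbits was already done in Lemma \ref{lem_staborb} and all the incidence geometry was handled in Lemma \ref{energy1}; the only thing to verify carefully is the bijection between orbit pairs and circle pairs and the matching of the difference condition, together with a cheap case split on whether $v_\bfz\geq v$ ensures the right-hand side of the reduced equation is a valid residue modulo $p^{r-v}$.
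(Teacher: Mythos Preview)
Your proof is correct and follows essentially the same approach as the paper: both reduce via Lemma \ref{lem_staborb} to the identity $\textsf{orb}_r(\bfm)=p^{v}C_{r-v,\tilde{j}}$, split on whether $\bfz\equiv\bfo\,(\mod p^v)$, and then invoke Lemma \ref{energy1} at level $r-v$. The only differences are cosmetic (notation $\bfx',\bfz'$ versus $\tilde{\bfx},\tilde{\bfz}$, and phrasing the case split as $v_\bfz<v$ rather than $\bfz\not\equiv\bfo\,(\mod p^v)$).
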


\begin{proof}
Write $v=v_{\bfm}$ and $\bfm=p^{v}\tilde{\bfm}$. Here $0\leq v\leq r-1$, $\tilde{\bfm}\in (\bZ/p^{r-v_\bfm}\bZ)^2$ and $v_{\tilde{\bfm}}=0$. Write $\|\tilde{\bfm}\|=\tilde{j}$ for simplicity. Then $\tilde{j}\in (\bZ/p^{r-v}\bZ)^\ast$. By Lemma \ref{lem_staborb}, we have $\textsf{orb}(\bfm)= p^v C_{r-v,\tilde{j}}$. Write $\bfx=p^v \tilde{\bfx}$ and $\bfy=p^v \tilde{\bfy}$ with $\tilde{\bfx},\tilde{\bfy}\in C_{r-v,\tilde{j}}$. When $\bfz\not\equiv \bfo (\mod p^v)$, it is obvious that
\[
\#\big\{(\bfx,\bfy)\in (\textsf{orb}_r(\bfm))^2:\, \bfx-\bfy\equiv \bfz\,(\mod p^r)\big\}=0.
\]
When $\bfz\equiv \bfo (\mod p^v)$, we write $\bfz=p^v\tilde{\bfz}$ for some $\bfo\neq \tilde{\bfz}\in (\bZ/p^{r-v}\bZ)^2$. It follows that
\[
\#\big\{(\bfx,\bfy)\in (\textsf{orb}_r(\bfm))^2:\, \bfx-\bfy\equiv \bfz\,(\mod p^r)\big\} = \#\big\{(\tilde{\bfx},\tilde{\bfy})\in (C_{\tilde{j},r-v})^2:\, \tilde{\bfx}-\tilde{\bfy}\equiv \tilde{\bfz}\,(\mod p^{r-v})\big\}.
\]
By Lemma \ref{energy1}, the above quantity can be bounded by $2p^{r-v-1}$.
\end{proof}

With the same argument, we obtain similar results for the case $p\equiv 1\, (\mod 4)$.

\begin{lemma} \label{lem_staborb1}
{\color{black}Let $p\equiv 1\,(\mod 4)$ and $r\geq 1$.} For any $0 \ne \bfx\in (\bZ/p^r\bZ)^2$, let $\bfx=p^{v_\bfx}\tilde{\bfx}$ for some $\tilde{\bfx}\in (\bZ/p^{r-v_\bfx}\bZ)^2$ with $v_{\tilde{\bfx}}=0$. Then
\[
|\textsf{stab}_r(\bfx)| = p^{v_\bfx},\qquad |\textsf{orb}_r(\bfx)| =  p^{r-v_\bfx}(1-1/p).
\]
And $\textsf{orb}_r(\bfx) =  p^{v_\bfx}\textsf{orb}_{r-v_\bfx}(\tilde{\bfx})$.
\end{lemma}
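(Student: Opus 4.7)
The plan is to mirror the proof of Lemma~\ref{lem_staborb} (and the supporting Lemma~\ref{lem_car_Gr}), with one new ingredient needed to handle a phenomenon specific to $p\equiv 1\,(\mod 4)$. First I would establish $|G_r|=p^r(1-1/p)$: the standard count gives $|G_1|=|\{(a,b)\in\bF_p^2:\,a^2+b^2=1\}|=p-1$ when $p\equiv 1\,(\mod 4)$ (since $-1$ is a square), and Hensel's lemma applied to $F(a,b)=a^2+b^2-1$ (whose gradient $(2a,2b)$ is nonzero on $G_1$ for odd $p$) lifts each element of $G_{r-1}$ to exactly $p$ elements of $G_r$.

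The scaling argument from Lemma~\ref{lem_staborb} then reduces everything to proving $|\textsf{stab}_{r-v_\bfx}(\tilde{\bfx})|=1$ whenever $v_{\tilde{\bfx}}=0$. Indeed, $\theta\bfx=p^{v_\bfx}(\theta\tilde{\bfx})$, so the value of $\theta\tilde{\bfx}$ modulo $p^{r-v_\bfx}$ depends only on the image of $\theta$ under the reduction $G_r\to G_{r-v_\bfx}$, which is surjective by Hensel. This yields $\textsf{orb}_r(\bfx)=p^{v_\bfx}\textsf{orb}_{r-v_\bfx}(\tilde{\bfx})$; combined with a trivial stabilizer, orbit-stabilizer then gives $|\textsf{orb}_r(\bfx)|=|G_{r-v_\bfx}|=p^{r-v_\bfx}(1-1/p)$ and hence $|\textsf{stab}_r(\bfx)|=|G_r|/|\textsf{orb}_r(\bfx)|=p^{v_\bfx}$.

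The main obstacle is establishing this trivial stabilizer. The direct matrix-inversion route used in Lemma~\ref{lem_staborb}, which solves \eqref{eq_rot2} via the determinant $\tilde{x}_1^2+\tilde{x}_2^2$, requires $\|\tilde{\bfx}\|\not\equiv 0\,(\mod p)$: this is automatic for $p\equiv 3\,(\mod 4)$ but fails for $p\equiv 1\,(\mod 4)$, where isotropic vectors such as $(1,\tilde{i})$ (with $\tilde{i}^2\equiv -1\,(\mod p^r)$) have $v_{\tilde{\bfx}}=0$ yet $\|\tilde{\bfx}\|\equiv 0\,(\mod p^r)$. To bypass this I will exploit the identity $\theta^{-1}=\theta^T$ in $SO_2$: if $\theta\tilde{\bfx}\equiv\tilde{\bfx}$, then also $\theta^{-1}\tilde{\bfx}\equiv\tilde{\bfx}$, and subtraction gives
\[
(\theta-\theta^{-1})\tilde{\bfx}=\begin{bmatrix}0&-2b\\ 2b&0\end{bmatrix}\tilde{\bfx}\equiv\bfo\,(\mod p^{r-v_\bfx}).
\]
Since $v_{\tilde{\bfx}}=0$ and $2$ is a unit, this forces $b\equiv 0\,(\mod p^{r-v_\bfx})$; then $a^2\equiv 1$ together with $p$ odd yields $a\equiv \pm 1$, and the possibility $a\equiv -1$ is ruled out because it would imply $2\tilde{\bfx}\equiv\bfo$, contradicting $v_{\tilde{\bfx}}=0$. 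Hence $\theta\equiv I$, as required.
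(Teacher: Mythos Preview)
Your proof is correct. The paper does not actually give a proof of this lemma: it only states that ``the arguments are identical but more complicated'' than the $p\equiv 3\,(\mod 4)$ case and omits the details, so there is nothing to compare against beyond the template of Lemma~\ref{lem_staborb}.

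Your reduction (via $|G_r|=p^r(1-1/p)$, Hensel surjectivity of $G_r\to G_{r-v_\bfx}$, and orbit--stabilizer) is exactly the structure of the paper's $p\equiv 3$ argument. The genuinely new ingredient is your treatment of the stabilizer when $\|\tilde{\bfx}\|\equiv 0\,(\mod p)$, where the paper's determinant trick from \eqref{eq_rot2} breaks down. Using $\theta^{-1}=\theta^T$ to obtain $(\theta-\theta^{-1})\tilde{\bfx}\equiv\bfo$, and hence $b\equiv 0$ directly from $v_{\tilde{\bfx}}=0$, is a clean and uniform way to handle both the anisotropic and isotropic cases at once; it is arguably simpler than the case analysis the phrase ``more complicated'' hints at. The remaining steps ($a^2\equiv 1\Rightarrow a\equiv\pm 1$ for odd $p$, and ruling out $a\equiv -1$ via $2\tilde{\bfx}\equiv\bfo$) are sound.
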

\begin{lemma}\label{circle-size1}
{\color{black}Let $p\equiv 1\,(\mod 4)$ and $r\geq 1$.} Let $j$ be an integer with $\text{ord}_p(j)=v$. 
Then
\[
|C_{r,j}| =
\begin{cases}
(v+1)p^r(1-1/p), \quad &\text{if }0\leq v <r,\\
(rp+p-r)p^{r-1},\quad &\text{if  $v=r$}
\end{cases}
\]

\end{lemma}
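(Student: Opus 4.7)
The plan is to leverage the factorization $x^2+y^2 = (x+\i y)(x-\i y)$, which becomes available modulo $p^r$ precisely because $p\equiv 1\,(\mod 4)$. Since $-1$ is a quadratic residue mod $p$ and the derivative of $t^2+1$ at such a root is a unit, Hensel's lemma (Lemma \ref{lem_Hensel}) produces some $\i\in \bZ/p^r\bZ$ with $\i^2\equiv -1\,(\mod p^r)$. The linear substitution $u = x+\i y$, $w = x-\i y$ is then a bijection on $(\bZ/p^r\bZ)^2$ (its inverse $x=(u+w)/2$, $y=(u-w)/(2\i)$ is well-defined because $2$ and $\i$ are units), and it transforms $x^2+y^2\equiv j$ into $uw \equiv j\,(\mod p^r)$. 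Hence
\[
|C_{r,j}| = \#\{(u,w) \in (\bZ/p^r\bZ)^2 :\, uw \equiv j\,(\mod p^r)\}.
\]

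I would then count points on this hyperbola by stratifying according to $a := \ord_p(u)$. For $0\le a<r$, writing $u = p^a u'$ with $(u',p)=1$, there are $p^{r-a}(1-1/p)$ such $u$ in $\bZ/p^r\bZ$. When $0\le v<r$ and $j=p^v\tilde{j}$ with $\tilde{j}$ a unit, matching $p$-adic valuations in $p^a u' w \equiv p^v\tilde{j}\,(\mod p^r)$ forces $a\le v$; for such $a$, dividing by $p^a$ gives $w \equiv (u')^{-1}p^{v-a}\tilde{j}\,(\mod p^{r-a})$, which has exactly $p^a$ lifts to $\bZ/p^r\bZ$. Each of the $v+1$ admissible strata $a\in\{0,1,\ldots,v\}$ then contributes $p^{r-a}(1-1/p)\cdot p^a = p^r(1-1/p)$, for a total of $(v+1)p^r(1-1/p)$, matching the first half of the claim.

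For the remaining case $v=r$, i.e., $j\equiv 0\,(\mod p^r)$, one must additionally include the stratum $a=r$, where $u=0$ and $w$ is arbitrary, contributing $p^r$ pairs. For each $0\le a\le r-1$, the condition $uw\equiv 0\,(\mod p^r)$ becomes $\ord_p(w)\ge r-a$, giving $p^a$ choices of $w$ and $p^r(1-1/p)$ pairs per stratum. Summing,
\[
|C_{r,0}| = p^r + r\cdot p^r(1-1/p) = p^{r-1}(p + r(p-1)) = p^{r-1}(rp+p-r),
\]
which agrees with the stated formula. The only step requiring care is the Hensel lift furnishing $\i$ and the bookkeeping of valuations on the hyperbola $uw=j$; neither presents a genuine obstacle, and indeed the authors indicate that the argument parallels the $p\equiv 3\,(\mod 4)$ case treated in Lemma \ref{circle-size}.
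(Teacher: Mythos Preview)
Your argument is correct and complete: the Hensel lift of a square root of $-1$ is unproblematic, the change of variables $(x,y)\mapsto(u,w)$ is a bijection on $(\bZ/p^r\bZ)^2$ since $2$ and $\i$ are units, and your stratified count of solutions to $uw\equiv j\,(\mod p^r)$ is accurate in both cases.

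Your route, however, is not the one the paper has in mind. The paper states that the $p\equiv 1\,(\mod 4)$ lemmas follow by ``the same argument'' as their $p\equiv 3\,(\mod 4)$ counterparts, which for Lemma~\ref{circle-size} means: Hensel-lift the count of $C_{1,j}$ for unit $j$, and for non-unit $j$ set up a recursion $|C_{r,p^v\tilde{j}}|$ in terms of $|C_{r-2,p^{v-2}\tilde{j}}|$ by peeling off factors of $p$ from the coordinates. When $p\equiv 1\,(\mod 4)$ this recursion is genuinely messier, because $C_{1,0}$ is no longer $\{\bfo\}$ but the union of two isotropic lines, so one cannot simply write every $\bfx\in C_{r,j}$ with $p\mid j$ as $p\bfy$; one must separately track the contribution from points whose reduction mod $p$ lies on the null cone but is nonzero. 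This is presumably what the authors mean by ``identical but more complicated''. Your diagonalization sidesteps all of that: by passing to the hyperbola you replace the recursive structure with a single transparent valuation count, which is both shorter and more conceptual for this particular congruence class of $p$. The trade-off is that your method is specific to $p\equiv 1\,(\mod 4)$ and to the quadratic form $x^2+y^2$, whereas the paper's Hensel-plus-recursion template is what generalizes to other forms and to the $p\equiv 3\,(\mod 4)$ case treated in detail.
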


\begin{lemma}\label{energy3}\color{black}
{\color{black}Let $p\equiv 1\,(\mod 4)$ and $r\geq 1$.} Let $j \in (\bZ /p^r\bZ)^\ast$. For any $\bfo \ne \bfz \in (\bZ /p^r \bZ)^2$, we have
\[
\#\big\{(\bfx,\bfy)\in C_{r,j}^2:\, \bfx-\bfy\equiv \bfz\,(\mod p^r)\big\}\, \leq \,
2p^{r-1}.
\]
\end{lemma}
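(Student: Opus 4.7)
My plan is to adapt the proof of Lemma \ref{energy1} to the residue class $p \equiv 1\,(\mod 4)$. The only genuinely new obstruction is that the quadratic form $z_1^2 + z_2^2$ can now vanish modulo $p$ at nonzero vectors (so-called isotropic directions); I will show that in precisely those circumstances the system in question has no solutions at all, so the bound $2p^{r-1}$ still applies.

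First I would substitute $\bfy = \bfx - \bfz$ to replace the four-equation system \eqref{eq_sys} (with $j,p,r$ from the present statement) by the equivalent two-equation system
\[y_1^2 + y_2^2 \equiv j,\qquad 2(z_1 y_1 + z_2 y_2) + (z_1^2 + z_2^2) \equiv 0 \,(\mod p^r),\]
with $\bfx$ uniquely determined by $\bfy$. Writing $\bfz = p^k \tilde{\bfz}$ with $v_{\tilde{\bfz}} = 0$ and $0 \le k < r$, I would divide the linear equation by $p^k$ to obtain
\[2(\tilde{z_1} y_1 + \tilde{z_2} y_2) + p^k(\tilde{z_1}^2 + \tilde{z_2}^2) \equiv 0\,(\mod p^{r-k}).\]

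Next I would split on whether $\tilde{z_1}^2 + \tilde{z_2}^2$ is a unit modulo $p$. In the \emph{anisotropic} case ($\tilde{z_1}^2 + \tilde{z_2}^2 \not\equiv 0\,(\mod p)$), the mod-$p$ reduction of the linear equation can be solved for one of $y_1, y_2$ in terms of the other with unit coefficient, and substituting into the circle equation yields, exactly as in Lemma \ref{energy1}, a polynomial of degree $2$ with unit leading coefficient in that variable, giving at most two solutions mod $p$. In the \emph{isotropic} case ($\tilde{z_1}^2 + \tilde{z_2}^2 \equiv 0\,(\mod p)$, which can occur only when $p \equiv 1\,(\mod 4)$), the condition $v_{\tilde{\bfz}} = 0$ forces both $\tilde{z_1}, \tilde{z_2}$ to be units modulo $p$; the mod-$p$ reduction of the linear equation then simplifies to $\tilde{z_1} y_1 + \tilde{z_2} y_2 \equiv 0\,(\mod p)$ regardless of whether $k=0$ (the constant term is already $0$) or $k\ge 1$ (the $p^k$ factor kills the constant term), so $y_1 \equiv -\tilde{z_1}^{-1}\tilde{z_2}\, y_2\,(\mod p)$ and therefore
\[y_1^2 + y_2^2 \equiv \tilde{z_1}^{-2}(\tilde{z_1}^2 + \tilde{z_2}^2)\, y_2^2 \equiv 0\,(\mod p),\]
contradicting $y_1^2 + y_2^2 \equiv j\,(\mod p)$ together with $(j,p) = 1$. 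Hence the isotropic sub-case contributes no solutions at all.

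Finally, I would lift each mod-$p$ solution via Hensel's lemma (Lemma \ref{lem_Hensel}) applied to the two-equation system in the two unknowns $(y_1, y_2)$. The relevant Jacobian
\[\begin{bmatrix} 2y_1 & 2y_2 \\ 2z_1 & 2z_2 \end{bmatrix}\]
has rank at least $1$ modulo $p$, because $y_1^2 + y_2^2 \equiv j$ with $(j,p) = 1$ forces $(y_1, y_2) \not\equiv (0,0)\,(\mod p)$. Lemma \ref{lem_Hensel} then produces at most $p^{(r-1)(2-1)} = p^{r-1}$ lifts per mod-$p$ solution, giving the total bound $2 \cdot p^{r-1} = 2p^{r-1}$. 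The principal new difficulty compared with the $p \equiv 3\,(\mod 4)$ argument is ruling out the isotropic sub-case correctly; once that exclusion is in place, the rest is a routine adaptation of the earlier proof.
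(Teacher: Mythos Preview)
Your argument is correct and follows the same template the paper uses for Lemma \ref{energy1}; the paper itself omits the proof of Lemma \ref{energy3}, saying only that the arguments are ``identical but more complicated'' when $p\equiv 1\,(\mod 4)$. Your identification and exclusion of the isotropic sub-case $\tilde{z_1}^2+\tilde{z_2}^2\equiv 0\,(\mod p)$ is exactly the extra complication alluded to, and your treatment of it is sound.

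One small point worth making explicit in your write-up: when $k\ge 1$, the \emph{original} two-equation system reduced mod $p$ has many solutions (the second equation becomes $0\equiv 0$), but your divided equation shows that any mod-$p^r$ solution must reduce to one of the at most two mod-$p$ points satisfying (a') together with $\tilde{z_1}y_1+\tilde{z_2}y_2\equiv 0$. It is to those $\le 2$ points that you then apply Lemma \ref{lem_Hensel} for the original system, using only that the Jacobian has rank $\ge 1$ there. Your text does this, but the phrase ``lift each mod-$p$ solution'' could be misread as lifting solutions of the divided system; a sentence clarifying that the $\le 2$ points are in particular mod-$p$ solutions of the original system (trivially, since equation~(b) mod $p$ is vacuous when $k\ge 1$) would remove any ambiguity. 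Compared with the paper's four-variable version in Lemma \ref{energy1}, your two-variable reduction is slightly cleaner and reaches the same $2p^{r-1}$ bound by the same mechanism.
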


\begin{lemma}\label{energy4}
    For $p\equiv 1\, (\mod 4)$ be a prime, $r \ge 1$ be an integer. Let $\bfm =p^{v_{\bfm}}\tilde{\bfm} \in (\bZ /p^r \bZ)^2$ with $\|\tilde{\bfm}\|\not\equiv 0\,(\mod p)$. For any $ \bfz \in (\bZ /p^r\bZ)^2$, we have
    \[ \# \{ (\bfx ,\bfy )\in (\textsf{orb}_r(\bfm))^2 \colon \bfx -\bfy \equiv \bfz (\mod p^r) \} \ll  p^{r-v_{\bfm}-1}.
     \]
\end{lemma}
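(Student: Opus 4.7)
The plan is to mirror the proof of Lemma \ref{energy2}, replacing the $p\equiv 3\,(\mod 4)$ inputs by their $p\equiv 1\,(\mod 4)$ analogues (Lemmas \ref{lem_staborb1} and \ref{energy3}). The key idea is to factor out the $p$-adic valuation $v_\bfm$ to reduce to a question about a genuine circle of unit radius, and then invoke Lemma \ref{energy3}. I will assume (as in the parallel Lemma \ref{energy2}) that $\bfz\ne \bfo$; the case $\bfz=\bfo$ is pathological for the stated bound, since the count then equals $|\textsf{orb}_r(\bfm)|\asymp p^{r-v_\bfm}$, and this likely reflects an implicit hypothesis missing from the statement.

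First, by Lemma \ref{lem_staborb1}, one has $\textsf{orb}_r(\bfm)=p^{v_\bfm}\textsf{orb}_{r-v_\bfm}(\tilde{\bfm})$, so every element of $\textsf{orb}_r(\bfm)$ is divisible by $p^{v_\bfm}$. Hence, if $\bfz\not\equiv \bfo\,(\mod p^{v_\bfm})$, the set in question is empty and the bound is trivial. Otherwise, write $\bfz=p^{v_\bfm}\tilde{\bfz}$ with $\tilde{\bfz}\in (\bZ/p^{r-v_\bfm}\bZ)^2$, and substitute $\bfx=p^{v_\bfm}\tilde{\bfx}$, $\bfy=p^{v_\bfm}\tilde{\bfy}$. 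The cardinality under consideration then equals
\[
\#\big\{(\tilde{\bfx},\tilde{\bfy})\in (\textsf{orb}_{r-v_\bfm}(\tilde{\bfm}))^2:\, \tilde{\bfx}-\tilde{\bfy}\equiv \tilde{\bfz}\,(\mod p^{r-v_\bfm})\big\}.
\]

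Next, since each element of $G_{r-v_\bfm}$ preserves the quadratic form and $\|\tilde{\bfm}\|\not\equiv 0\,(\mod p)$ is a unit in $\bZ/p^{r-v_\bfm}\bZ$, the orbit $\textsf{orb}_{r-v_\bfm}(\tilde{\bfm})$ is contained in the circle $C_{r-v_\bfm,\|\tilde{\bfm}\|}$. Therefore the above count is bounded by
\[
\#\big\{(\tilde{\bfx},\tilde{\bfy})\in (C_{r-v_\bfm,\|\tilde{\bfm}\|})^2:\, \tilde{\bfx}-\tilde{\bfy}\equiv \tilde{\bfz}\,(\mod p^{r-v_\bfm})\big\}.
\]
Applying Lemma \ref{energy3} (with $r$ replaced by $r-v_\bfm$, $j=\|\tilde{\bfm}\|$, and displacement $\tilde{\bfz}\ne \bfo$) yields the bound $\leq 2p^{r-v_\bfm-1}$, which is the desired conclusion.

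I do not anticipate a real obstacle, since Lemma \ref{energy3}, which is the analogue of Lemma \ref{energy1} that powered the proof of Lemma \ref{energy2}, is stated as already established. The only subtlety worth double-checking is the equality $\textsf{orb}_r(\bfm)=p^{v_\bfm}\textsf{orb}_{r-v_\bfm}(\tilde{\bfm})$ from Lemma \ref{lem_staborb1}, which allows the clean reduction to the unit-radius case; once that is in hand, the present lemma follows verbatim from the $p\equiv 3\,(\mod 4)$ template.
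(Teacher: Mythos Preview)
Your proposal is correct and follows exactly the approach the paper intends: the paper explicitly omits the proof of this lemma, stating that for $p\equiv 1\,(\mod 4)$ ``the arguments are identical but more complicated,'' and your reduction via Lemma \ref{lem_staborb1} followed by the inclusion $\textsf{orb}_{r-v_\bfm}(\tilde{\bfm})\subseteq C_{r-v_\bfm,\|\tilde{\bfm}\|}$ and an appeal to Lemma \ref{energy3} is precisely the $p\equiv 1\,(\mod 4)$ transcription of the proof of Lemma \ref{energy2}. You are also right to flag the missing hypothesis $\bfz\neq\bfo$: the parallel Lemma \ref{energy2} carries this assumption explicitly, and without it the stated bound fails (the diagonal contributes $|\textsf{orb}_r(\bfm)|\asymp p^{r-v_\bfm}$), so this is an omission in the statement rather than a gap in your argument.
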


\subsection{Proof of extension theorems}
Theorems \ref{resitriction} and \ref{restriction3} are proved in the same way. Let $V$ be a variety with $V\subseteq (\bZ/p^r\bZ)^2$, which can be $C_{r,j}$ or $\textsf{orb}_r(\mathbf{m})$, on which the normalized surface measure is denoted by $d\sigma$. For simplicity, we denote $q=p^r$ in this subsection.

We have
\begin{align*}
&\sum_{\bfm\,(\mod q)}|(fd\sigma)^\vee(\bfm)|^4=\sum_{\bfm\,(\mod q)}\left\vert \frac{1}{|V|}\sum_{\bfx\in V}f(\bfx)e_q(\bfm\cdot \bfx)\right\vert^4\\
&\qquad=\frac{q^{2}}{|V|^4}\sum_{\xi,\xi',\eta,\eta'\in V\atop \xi-\eta\equiv \xi'-\eta'\,(\mod q)}f(\xi)f(\xi')\overline{f(\eta)f(\eta')}
=\frac{q^{2}}{|V|^4}\sum_{\zeta\, (\mod q)} \left|\sum_{\xi,\eta\in V\atop \xi-\eta\equiv \zeta\,(\mod q)}f(\xi)\overline{f(\eta)}\right|^2.
\end{align*}


For $\zeta\equiv \mathbf{0}\,(\mod q)$, we have
\[\Big|\sum_{\xi,\eta\in V\atop \xi\equiv \eta\, (\mod q)}f(\xi)\overline{f(\eta)}\Big|^2 = \Big(\sum_{\xi\in V}|f(\xi)|^2\Big)^2.\]

For $\zeta\not\equiv \mathbf{0}\,(\mod p^r)$, the Cauchy-Schwarz inequality implies
\begin{align*}
\sum_{\zeta\not\equiv \mathbf{0}\,(\mod q)} \Big| \sum_{\xi,\eta\in V\atop \xi-\eta\equiv\zeta\,(\mod q)}f(\xi)\overline{f(\eta)}\Big|^2
\leq \sum_{\zeta\not\equiv \mathbf{0}\,(\mod q)} \Big( \sum_{\xi,\eta\in V\atop \xi-\eta\equiv \zeta\,(\mod q)}1^2\Big)\Big( \sum_{\xi,\eta\in V\atop\xi-\eta\equiv \zeta\,(\mod q)}|f(\xi)|^2|f(\eta)|^2\Big).\\
\end{align*}
Assuming that $\sum\nolimits_{\xi,\eta\in V\,\xi-\eta\equiv \zeta\,(\mod q)}1 \ll U$ {\color{black}for all $\zeta\not\equiv \bfo\,(\mod q)$}, then
\[
\sum_{\zeta\not\equiv \mathbf{0}\,(\mod q)} \Big| \sum_{\xi,\eta\in V\atop \xi-\eta\equiv\zeta\,(\mod q)}f(\xi)\overline{f(\eta)}\Big|^2 \ll U\, \sum_{\zeta \,(\mod p^r)} \Big( \sum_{\xi,\eta\in V\atop\xi-\eta\equiv \zeta\,(\mod q)}|f(\xi)|^2|f(\eta)|^2\Big)=U\,\Big(\sum_{\xi\in V}|f(\xi)|^2\Big)^2.
\]
It follows that
\[\left(\sum_{m\, (\mod q)}|(fd\sigma_{r,j})^\vee(m)|^4\right)^{1/2}\ll \frac{q {\color{black}(1+U)}^{1/2}}{|V|^2}\sum_{x\in V}|f(x)|^2.\]
On the one hand, to get the bound $U$, we use Lemmas \ref{energy1} and \ref{energy2}. On the other hand, Lemma \ref{circle-size} gives estimates on the size of $V$. Hence, the theorems follow.



\section{Proof of Theorems \ref{un-conditional} and \ref{conditional}}
These two theorems are proved by the same argument.

By Lemma \ref{circle-size}, it is clear that that the number of pairs $(\mathbf{x}, \mathbf{y})\in E\times E$ such that $||\mathbf{x}-\mathbf{y}||\not\in (\mathbb{Z}/p^r\mathbb{Z})^*$ is much smaller than $|E|^2$, since $|E|\gg p^{2r-1}$.
By the Cauchy-Schwarz inequality, we have
\begin{align*}
|E|^4 &\ll \Big(\sum\limits_{\bfx,\bfy\in E\atop \|\bfx-\bfy\|\in (\bZ/p^r\bZ)^\ast}1\Big)^2=\big(\sum\limits_{j\in \Delta_{2,r}(E)\cap (\bZ/p^r\bZ)^\ast} \sum\limits_{\bfx\in E}\sum\limits_{\bfy\in E\atop \|\bfx-\bfy\|\equiv j\,(\mod p^r)}1\Big)^2\\
&\leq |\Delta_{2,r}(E)|\cdot |E|\cdot  \sum\limits_{j\not\equiv 0\, (\mod p)} \sum\limits_{\bfx\in E}\Big(\sum\limits_{\bfy\in E\atop \|\bfx-\bfy\|\equiv j\,(\mod p^r)}1\Big)^2.
\end{align*}
So
\[|\Delta_{2, r}(E)|\gg \frac{|E|^3}{\mathcal{N}},\]
where
\[
\cN := \sum\limits_{j\not\equiv 0\, (\mod p)} \sum\limits_{\bfx\in E}\Big(\sum\limits_{\bfy\in E\atop \|\bfx-\bfy\|\equiv j\,(\mod p^r)}1\Big)^2= \sum\limits_{x\in E}\, \sum\limits_{j\not\equiv 0\,(\mod p)}\,\Big( \sum_{\mathbf{z}\in C_{r,j}}1_E(\mathbf{x}-\mathbf{z}) \Big)^2.
\]

For any given $\mathbf{x}\in E$, we have
\begin{align*}
 &\sum_{j\not\equiv 0\,(\mod p)} \Big( \sum_{\mathbf{z}\in C_{r,j}}1_E(\mathbf{x}-\mathbf{z}) \Big)^2=\sum_{j\not\equiv 0\,(\mod p)}\sum_{\mathbf{y}, \mathbf{z}\in C_{r,j}}1_E(\mathbf{x}-\mathbf{y})1_E(\mathbf{x}-\mathbf{z})\\
&\le \sum_{j\not\equiv 0\,(\mod p)}\sum_{\mathbf{z}\in C_{r,j}}1_E(\mathbf{x}-\mathbf{z})\sum_{\theta\in G_r}1_E(\mathbf{x}-\theta \mathbf{z})\le\sum_{{\color{black}\bfz\, (\mod p^r)}}\sum_{\theta\in G_r}1_E(\mathbf{x}-\mathbf{z})1_E(\mathbf{x}-\theta\mathbf{z}).
\end{align*}
In the next step, we write
\begin{align*}
& \sum_{\mathbf{z}\,(\mod p^r)} \sum_{\theta\in G_r}1_E(\mathbf{x}-\mathbf{z})1_E(\mathbf{x}-\theta \mathbf{z})\\
&\quad =\sum_{\mathbf{z}\,(\mod p^r)} \sum_{\theta\in G_r}\sum\limits_{\bfm\,(\mod p^r)}\widehat {1_E}(\mathbf{m})e_{p^r}(\bfm\cdot (\bfx-\bfz))\overline{\sum\limits_{\bfm'\,(\mod p^r)}\widehat{1_E}(\bfm')e_{p^r}(\mathbf{m}'\cdot (\bfx-\theta \bfz))}\\
&\quad =\sum_{\theta\in G_r}\sum\limits_{\bfm\,(\mod p^r)}\sum\limits_{\bfm'\,(\mod p^r)}\widehat {1_E}(\mathbf{m})\overline{\widehat{1_E}(\bfm')}e_{p^r}\big((\bfm-\bfm')\cdot \bfx\big)\sum_{\mathbf{z}\,(\mod p^r)}e_{p^r}\big(-(\bfm\cdot \bfz-\bfm'\cdot (\theta \bfz)\big).
\end{align*}
Note that
\[
\sum_{\mathbf{z}\,(\mod p^r)}e_{p^r}\big(-(\bfm\cdot \bfz-\bfm'\cdot (\theta \bfz)\big) = \sum_{\mathbf{z}\,(\mod p^r)}e_{p^r}(-(\bfm-\theta^{-1}\bfm')\cdot  \bfz\big) =
\begin{cases}
p^{2r},\quad &\text{if }\bfm'=\theta \bfm,\\
0,&\text{otherwise}.
\end{cases}
\]
One deduces that
{\color{black}
\begin{align*}
\cN\ll & \sum\limits_{x\in E}\sum_{\mathbf{z}\,(\mod p^r)} \sum_{\theta\in G_r}1_E(\mathbf{x}-\mathbf{z})1_E(\mathbf{x}-\theta \mathbf{z})\\
=& p^{2r}\sum\limits_{x\in E}\sum_{\mathbf{m}\, (\mod p^r)}\sum_{\theta\in G_r} \widehat{1_E}(\mathbf{m})\overline{\widehat{1_E}(\theta\mathbf{m})} e_{p^r}((\bfm-\theta\mathbf{m})\cdot \mathbf{x})\\
\ll&  \sum\limits_{x\in E}\left(\frac{|E|^2}{p^r}+\mathcal{E}(\mathbf{x})\right) = \frac{|E|^3}{p^r}+\sum\limits_{x\in E}\mathcal{E}(\mathbf{x}),
\end{align*}
}
where
\[
\mathcal{E}(\mathbf{x}):=p^{2r}\sum\limits_{v=0}^{r-1} \cE_{v}(\bfx),\qquad \cE_{v}(\bfx)=\sum_{\bfm\,(\mod p^r)\atop v_\bfm=v}\sum_{\theta\in G_r} \widehat{1_E}(\mathbf{m})\overline{\widehat{1_E}(\theta\mathbf{m})} e_{p^r}((\bfm-\theta\mathbf{m})\cdot \mathbf{x}).
\]
Here, the first term $|E|^2/p^{r}$ comes from the summands with $\bfm\equiv \bfo\,(\mod p^r)$, in view of $|\widehat{1_E}(\bfo)|=|E|/p^{2r}$ and $|G_r|\ll p^r$.

Write $\bfm=p^v\tilde{\bfm}$ and $\tilde{\theta}\equiv \theta \,(\mod p^{r-v})$. Then $\tilde{\theta}\tilde{\bfm}\equiv \theta{\color{black}\tilde{\bfm}}\,(\mod p^{r-v})$. Noting that $|G_r/G_{r-v}|=p^v$, we have
\begin{align}
\cE_{v}(\bfx)&=p^v\sum\limits_{\tilde{\bfm}\, (\mod p^{r-v})\atop v_{\tilde{\bfm}=0}}\sum_{\tilde{\theta}\in G_{r-v}} \widehat{1_E}(p^v\tilde{\mathbf{m}})\overline{\widehat{1_E}(p^v\tilde{\theta}\tilde{\mathbf{m}})}e_{p^{r-v}}\big((\tilde{\bfm}-\tilde{\theta}\tilde{\mathbf{m}})\cdot \mathbf{x})\big)\label{eq_xandxtilde}\\
&=p^v\sum\limits_{\tilde{j}\in (\bZ/p^{r-v}\bZ)^\ast\,}\sum\limits_{\tilde{\bfm}\in C_{r-v,\tilde{j}}} \sum\limits_{\bfm\in C_{r-v,\tilde{j}}} \widehat{1_E}(p^v\tilde{\mathbf{m}})\overline{\widehat{1_E}(p^v \mathbf{m})}e_{p^{r-v}}\big((\tilde{\bfm}-\bfm)\cdot \mathbf{x})\big)\nonumber\\
&= p^v\sum\limits_{\tilde{j}\in (\bZ/p^{r-v}\bZ)^\ast\,}\left|\sum\limits_{\tilde{m}\in C_{r-v,\tilde{j}}}\widehat{1_E}(p^v\tilde{\bfm})e_{p^{r-v}}(\tilde{\bfm}\cdot \bfx)\right|^2. \nonumber
\end{align}
{\color{black}When $1\leq v\leq r-1$,} we rewrite
\begin{align*}
\widehat{1_E}(p^v\tilde{\bfm}) &= \frac{1}{p^{2r}} \sum\limits_{\bfy\,(\mod p^r)} 1_E(\bfy) e_{p^r}(-p^v \tilde{\bfm}\cdot \bfy) \\
&= {\color{black}\frac{1}{p^{2(r-v)}}} \sum\limits_{\bfy_1\,(\mod p^{r-v})} {\color{black}\frac{1}{p^{2v}}}\sum\limits_{\bfy_2\,(\mod p^{v})} 1_E(\bfy_1+p^{r-v}\bfy_2)e_{p^{r-v}}(-\tilde{\bfm}\cdot\bfy_1)
:= \widehat{g_{E,r-v}}(\tilde{\bfm}).
\end{align*}
Here, the Fourier transformation on the left-hand side is over $(\bZ/p^r\bZ)^2$, while that on the right-hand side is over $(\bZ/p^{r-v}\bZ)^2$. And, for $\bfy_1\,(\mod p^{r-v})$,
\[
g_{E,r-v}(\bfy_1)  =\frac{1}{p^{2v}} \sum\limits_{\bfy_2\,(\mod p^v)} 1_E(\bfy_1+p^{r-v}\bfy_2) = \frac{1}{p^{2v}}\#\{\bfy\in E:\, \bfy\equiv \bfy_1\,(\mod p^{r-v})\}.
\]
We note that for all $\bfy_1\,(\mod p^{r-v})$, one has $g_{E,r-v}(\bfy_1)\le 1$. {\color{black}When $v=0$, the above notation gives $g_{E,r}=1_E$. Now , for all $0\leq v\leq r-1$, we have
\[
\cE_{v}(\bfx)=p^v\sum\limits_{\tilde{j}\in (\bZ/p^{r-v}\bZ)^\ast\,}\left|\sum\limits_{\tilde{m}\in C_{r-v,\tilde{j}}}\widehat{g_{E,r-v}}(\tilde{\bfm})e_{p^{r-v}}(\tilde{\bfm}\cdot \bfx)\right|^2
\]}
Write $f=\widehat{g_{E,r-v}}\big|_{C_{r-v,\tilde{j}}}$. {\color{black}Denote
\[
(fd\sigma_{r-v,\tilde{j}})^\vee (x) = \frac{1}{|C_{r-v,\tilde{j}}|}\sum\limits_{\bfy\in C_{r-v,\tilde{j}}}f(y)e_{p^{r-v}}(\bfy\cdot \bfx).
\]
}It follows that
\[
\cE_{v}(\bfx)\ll p^{2r-v}\sum\limits_{\tilde{j}\in (\bZ/p^{r-v}\bZ)^\ast\,}\left|(f d\sigma_{\color{black}r-v,\tilde{j}})^\vee(\bfx)\right|^2.
\]
Moreover, {\color{black}from \eqref{eq_xandxtilde}} we have $\cE_{v}(\bfx)=\cE_{v}(\tilde{\bfx})$ if $\tilde{\bfx}\equiv \bfx\,(\mod p^{r-v})$. Now we write
\[\color{black}
E(\mod p^{\gamma}):=\{\bfx\,(\mod p^\gamma):\, \bfx\in E\}= \{\tilde{\bfx}\,(\mod p^\gamma):\, \tilde{\bfx} \equiv \bfx\,(\mod p^\gamma)\text{ for some }\bfx\in E\}
\]
for $1\leq \gamma\leq r$. Then, by Cauchy-Schwartz inequality, Theorem \ref{resitriction} {\color{black}and Parseval's identity,} one deduces that
\begin{align*}
p^{2r}\sum\limits_{\bfx\in E}\cE_{v}(\bfx) &\ll p^{4r-v} \sum\limits_{\tilde{\bfx}\in E(\mod p^{r-v})} p^{2v} g_{E,r-v} (\tilde{\bfx}) \sum\limits_{\tilde{j}\in (\bZ/p^{r-v}\bZ)^\ast} |(f d\sigma_{r-v,\tilde{j}})^\vee(\tilde{\bfx})|^2\\
&\ll p^{4r+v} \sum\limits_{\tilde{j}\in (\bZ/p^{r-v}\bZ)^\ast}\left(\sum\limits_{\tilde{\bfx}\in E(\mod p^{r-v})} |g_{E,r-v}(\tilde{x})|^2\right)^{1/2}\left(\sum\limits_{\tilde{\bfx}\, (\mod p^{r-v})} |( f d\sigma_{r-v,\tilde{j}})^\vee(\tilde{\bfx})|^4\right)^{1/2}\\
&\ll p^{4r+v}\left(\sum\limits_{\tilde{\bfx}\in E(\mod p^{r-v})} |g_{E,r-v}(\tilde{\bfx})|^2\right)^{1/2}\cdot p^{-\frac{r-v+1}{2}}\sum\limits_{\tilde{j}\in (\bZ/p^{r-v}\bZ)^\ast}\sum\limits_{\tilde{\bfy}\in C_{r-v,\tilde{j}}} |\widehat{g_{E,r-v}} (\tilde{\bfy})|^2\\
&{\color{black}\ll p^{\frac{7r+3v-1}{2}}\left(\sum\limits_{\tilde{\bfx}\, (\mod p^{r-v})} |g_{E,r-v}(\tilde{\bfx})|^2\right)^{1/2}\cdot\sum\limits_{\tilde{\bfy}\,(\mod p^{r-v})} |\widehat{g_{E,r-v}} (\tilde{\bfy})|^2 }\\
&\ll  p^{\frac{3r+7v-1}{2}} \left(\sum\limits_{\tilde{\bfx}\,(\mod p^{r-v})}|g_{E,r-v}(\tilde{\bfx})|^2\right)^{3/2}.
\end{align*}
The last sum can be bounded trivially by
\[\sum\limits_{\tilde{\bfx}\,(\mod p^{r-v})}|g_{E,r-v}(\tilde{\bfx})|^2\le \sum\limits_{\tilde{\bfx}\,(\mod p^{r-v})}|g_{E,r-v}(\tilde{\bfx})|\le \frac{|E|}{p^{2v}}.\]
In other words,
\[p^{2r}\sum\limits_{\bfx\in E}\cE_{v}(\bfx)\ll p^{\frac{3r+v-1}{2}}|E|^{3/2}.\]
With this bound in hand, one has
\[\mathcal{N}\ll \frac{|E|^3}{p^r}+p^{\frac{3r-1}{2}}|E|^{3/2}\sum_{v=0}^{r-1}p^{v/2}\ll \frac{|E|^3}{p^r}+ p^{2r-1}|E|^{3/2}\ll \frac{|E|^3}{p^r},\]
whenever $|E|\gg p^{2r-\frac{2}{3}}$. This proves Theorem \ref{un-conditional}.

Indeed,
\[
p^{2r}\sum\limits_{\bfx\in E}\sum\limits_{v=0}^{r-2}\cE_v(\bfx) \ll p^{\frac{3r-1}{2}}|E|^{3/2}\sum_{v=0}^{r-2}p^{v/2} \ll p^{2r-\frac{3}{2}}|E|^{3/2}\ll \frac{|E|^3}{p^r}
\]
whenever $|E|\gg p^{2r-1}$. In the statement of Theorem \ref{conditional}, we know that
\[
\#\{(\bfx_1,\bfx_2)\in E^2:\, \bfx_1\equiv \bfx_2\,(\mod p)\}\ll p^{2r-\frac{7}{3}}|E|.
\]
Then
\[\sum\limits_{\tilde{\bfx}\,(\mod p)}|g_{E,1}(\tilde{\bfx})|^2 = \frac{1}{p^{4r-4}}\#\{(\bfx_1,\bfx_2)\in E^2:\, \bfx_1\equiv \bfx_2\,(\mod p)\} \ll  p^{-2r+\frac{5}{3}}|E| .\]
It follows that
\[
p^{2r}\sum\limits_{x\in E}\cE_{r-1}(\bfx) \ll  p^{5r-4}\cdot \left(p^{-2r+\frac{5}{3}}|E|\right)^{3/2} \ll p^{2r-\frac{3}{2}}|E|^{\frac{3}{2}} \ll\frac{|E|^3}{p^r},
\]
and then $\cN\ll \frac{|E|^3}{p^r}$. Plugging this bound to the above argument gives us Theorem \ref{conditional}.

{\color{black}
\begin{proof} [Proof of Corollary \ref{conditiona2}]
Under the condition of Corollary \ref{conditiona2}, we have
\[
\#\{(\bfx_1,\bfx_2)\in E^2:\, \bfx_1\equiv \bfx_2\,(\mod p)\} = \sum\limits_{\bfx_1\in E} \#\{\bfx_2\in E:\, \bfx_2\equiv \bfx_1 \,(\mod p)\} \ll p^{2r-7/3}|E|.
\]
Then the conclusion follows from Theorem \ref{conditional}.
\end{proof}
}

\section{An $\mathbb{F}_p$--to--$\mathbb{Z}/p^r\mathbb{Z}$ transfer principle in arbitrary dimensions}
\label{sec:ffr-general}

The purpose of this section is to formulate a  prime field-to-ring
transfer principle in arbitrary dimensions.  We work with the standard
quadratic form
\[
\|\bfx\|:=x_1^2+\cdots+x_n^2.
\]
The proof separates the finite field combinatorial input from the
higher-conductor analysis.  The dependence on the dimension enters
through the finite field isosceles energy, the zero-sphere estimate,
and the Fourier transform of a multiplicative character composed with
the quadratic form.

For an odd prime $p$ and an integer $j\geq1$, write
\[
R_j:=\bZ/p^j\bZ,
\qquad
U_j:=R_j^\times,
\]
and let $\widehat{U_j}$ denote the multiplicative character group of
$U_j$.  If $1\leq s\leq r$, let
\[
\rho_{r,s}:U_r\longrightarrow U_s
\]
be reduction modulo $p^s$.  We say that
$\chi\in\widehat{U_r}$ \emph{factors through $U_s$} if
\[
\chi=\chi_s\circ\rho_{r,s}
\]
for some $\chi_s\in\widehat{U_s}$.  Since $\rho_{r,s}$ is
surjective, the character $\chi_s$ is unique.  Characters that factor
through $U_1=\bF_p^\times$ will be called \emph{low-conductor
characters}.  For $2\leq s\leq r$, we say that $\chi$ has
\emph{exact conductor $p^s$} if it factors through $U_s$ but not
through $U_{s-1}$.  Equivalently, the induced character $\chi_s$ is
nontrivial on
\[
\ker(U_s\longrightarrow U_{s-1})=1+p^{s-1}R_s.
\]
Throughout this section, every multiplicative character of $U_j$ is
extended by zero to $R_j$.

There are two points that must be made precise.  First, the input is a
full isosceles-energy estimate, rather than only a lower bound for the
cardinality of a finite field distance set.  Second, in dimensions
$n\geq3$ this energy cannot be replaced by the count obtained by
requiring the base to have nonzero squared length: two distinct
endpoints may have zero squared distance.

For a weight $h:\bF_p^n\to[0,1]$, put
\[
H:=\sum_{\mathbf a\in\bF_p^n}h(\mathbf a)
\]
and define
\begin{equation}\label{eq:ffr-general-full-energy}
\mathcal T_{p,n}^{\times}(h)
:=
\sum_{\mathbf a,\bfb,\bfc\in\bF_p^n}
h(\mathbf a)h(\bfb)h(\bfc)
\mathbf 1_{\{
\|\mathbf a-\bfb\|=\|\mathbf a-\bfc\|\neq0
\}}.
\end{equation}
For a set $A\subset\bF_p^n$, we write
\[
\mathcal T_{p,n}^{\times}(A)
:=\mathcal T_{p,n}^{\times}(1_A).
\]
Thus, the superscript $\times$ records that the common squared length
is nonzero.  This is the full nonzero isosceles energy, including the
terms with $\bfb=\bfc$.

\begin{theorem}\label{thm:ffr-general-transfer}
Fix $n\geq2$ and
\[
\left\lfloor\frac n2\right\rfloor\leq\beta<n.
\]
Let $\mathfrak P$ be a collection of odd primes.  Suppose that there
are constants $C_0,C_1>0$, independent of $p\in\mathfrak P$, such
that, for every $p\in\mathfrak P$ and every set
$A\subset\bF_p^n$ with $|A|\geq C_0p^\beta$, one has
\begin{equation}\label{eq:ffr-general-set-input}
\mathcal T_{p,n}^{\times}(A)
\leq C_1\frac{|A|^3}{p}.
\end{equation}
Then there are constants
$C_2=C_2(n,C_0,C_1)>0$ and
$c_2=c_2(n,C_0,C_1)>0$ with the following property.  For every
$p\in\mathfrak P$, every $r\geq1$, and every
$E\subset(\bZ/p^r\bZ)^n$ satisfying
\[
|E|\geq C_2p^{n(r-1)+\beta},
\qquad\text{equivalently}\qquad
\delta_E\geq C_2p^{\beta-n},
\]
one has
\[
|\Delta_{n,r}(E)\cap U_r|\geq c_2p^r.
\]
In particular, $|\Delta_{n,r}(E)|\gg p^r$, and all implied constants
are independent of both $p$ and $r$.
\end{theorem}

Taking $\mathfrak P=\{p\}$ gives the fixed-prime formulation.  The
family formulation above merely records when the constants are
uniform as $p$ varies.

In two dimensions, it follows from the paper \cite{Murphy} that for $A\subset \mathbb{F}_p^2$, if $|A|\gg p^{\frac{5}{4}}$, then one has
\[
\mathcal T_{p,n}^{\times}(A)
\ll \frac{|A|^3}{p}.\]
In four dimensions \cite{TPXUE}, we obtained the same conclusion under the assumption that $|A|\gg p^{4-\frac{8}{r+2}}$ for any $r>23/7$.

Therefore, Theorems \ref{un-conditional-26} and \ref{un-conditional-27} follow immediately from Theorem \ref{thm:ffr-general-transfer}. 
\paragraph{Sketch of ideas.}
The transfer has three transparent steps.  First, reduce $E$ modulo
$p$ and record, at each point of $\bF_p^n$, the proportion of its
$p^{n(r-1)}$ lifts that lie in $E$.  This replaces the possibly very
irregular fibres of $E$ by a weight $h_E$ on the finite field.  Second,
expand the unit-distance second moment by multiplicative characters.
The characters that see only reduction modulo $p$ reproduce the
weighted finite field isosceles energy exactly, apart from the
predictable factor $p^{(3n-1)(r-1)}$.  Third, the characters that see
higher $p$-adic digits are controlled at once by the
$n$-dimensional quadratic Gauss transform; their Fourier multipliers
have size at most $p^{-sn/2}$ at conductor $p^s$. Thus, all of the
combinatorial information is supplied at the first residue level, and
the remaining digits contribute only a uniform analytic error.  A
short estimate for the zero sphere in $\bF_p^n$ guarantees that, when
$H_E\gg p^{\lfloor n/2\rfloor}$, a positive proportion of the pairs
have nonzero distance and hence lift to unit distances.

For
$E\subset(\bZ/p^r\bZ)^n$, define
\begin{equation}\label{eq:ffr-general-weight}
h_E(\mathbf a)
:=
p^{-n(r-1)}
\#\{\bfx\in E:\ \bfx\equiv\mathbf a\,(\mod p)\},
\qquad \mathbf a\in\bF_p^n.
\end{equation}
Then, $0\leq h_E\leq1$, and
\begin{equation}\label{eq:ffr-general-mass}
H_E:=\sum_{\mathbf a\in\bF_p^n}h_E(\mathbf a)
=\frac{|E|}{p^{n(r-1)}}.
\end{equation}
For $\bfx\in E$ and $j\in U_r$, let
\[
\nu_{\bfx}(j)
:=\#\{\bfy\in E:\ \|\bfx-\bfy\|=j\},
\]
and define
\begin{equation}\label{eq:ffr-general-ring-energy}
\cN_n^\times(E)
:=\sum_{\bfx\in E}\sum_{j\in U_r}\nu_{\bfx}(j)^2.
\end{equation}

The following two lemmas separate the combinatorial input from the
ring-theoretic argument.

\begin{lemma}\label{lem:ffr-general-weighted-transfer}
Let $n\geq2$ and
$\lfloor n/2\rfloor\leq\beta<n$, and let $p$ be an odd prime.
Suppose that $C_0',C_1'>0$ and that, whenever
$h:\bF_p^n\to[0,1]$ has mass $H\geq C_0'p^\beta$, one has
\[
\mathcal T_{p,n}^{\times}(h)
\leq C_1'\frac{H^3}{p}.
\]
Then, there are constants
$C_4=C_4(n,C_0',C_1')>0$ and
$c_4=c_4(n,C_0',C_1')>0$, independent of $p$ and $r$, such that
for every $r\geq1$ and every $E\subset(\bZ/p^r\bZ)^n$ with
$|E|\geq C_4p^{n(r-1)+\beta}$, one has
\[
|\Delta_{n,r}(E)\cap U_r|\geq c_4p^r.
\]
\end{lemma}

\begin{lemma}\label{lem:ffr-general-randomization}
Let $n\geq2$ and $1\leq\beta<n$.  Suppose that there are constants
$C_0,C_1>0$ such that every $A\subset\bF_p^n$ with
$|A|\geq C_0p^\beta$ satisfies
\[
\mathcal T_{p,n}^{\times}(A)
\leq C_1\frac{|A|^3}{p}.
\]
Put $C_3:=\max\{2C_0,3\}$. Then, every weight
$h:\bF_p^n\to[0,1]$ of mass $H\geq C_3p^\beta$ satisfies
\begin{equation}\label{eq:ffr-general-weighted-input}
\mathcal T_{p,n}^{\times}(h)
\leq(5C_1+1)\frac{H^3}{p}.
\end{equation}
\end{lemma}
Theorem \ref{thm:ffr-general-transfer} follows directly from these two lemmas.

\subsection{Proof of Lemma \ref{lem:ffr-general-weighted-transfer}}

We begin with the estimate that ensures the existence of sufficiently
many unit-distance pairs.

\begin{lemma}\label{lem:ffr-general-zero-cone}
Let $h:\bF_p^n\to[0,1]$ have mass $H$.  Then
\begin{equation}\label{eq:ffr-general-zero-cone}
\sum_{\mathbf a,\bfb\in\bF_p^n}
h(\mathbf a)h(\bfb)
\mathbf 1_{\{\|\mathbf a-\bfb\|=0\}}
\leq
\bigl(p^{-1}+p^{-n/2}\bigr)H^2
+p^{\lfloor n/2\rfloor}H.
\end{equation}
Consequently, if
$H\geq6p^{\lfloor n/2\rfloor}$, then
\begin{equation}\label{eq:ffr-general-nonzero-pairs}
\sum_{\mathbf a,\bfb\in\bF_p^n}
h(\mathbf a)h(\bfb)
\mathbf 1_{\{\|\mathbf a-\bfb\|\neq0\}}
\geq\frac{H^2}{6}.
\end{equation}
\end{lemma}

\begin{proof}
Let
\[
S_0:=\{\bfz\in\bF_p^n:\ \|\bfz\|=0\},
\]
and let $\mathcal A$ be the convolution matrix
$\mathcal A_{\mathbf a,\bfb}=1_{S_0}(\mathbf a-\bfb)$.
The additive characters of $\bF_p^n$ are eigenvectors of
$\mathcal A$.  The constant eigenvector has eigenvalue $|S_0|$, and
the eigenvalue at $\bfm\neq\bfo$ is
\[
\lambda(\bfm)
=\sum_{\bfz\in S_0}e_p(-\bfm\cdot\bfz).
\]
Let $\eta$ be the quadratic character of $\bF_p^*$, and let
\[
\mathfrak g_p:=\sum_{x\in\bF_p}e_p(x^2),
\qquad |\mathfrak g_p|=p^{1/2}.
\]
Additive orthogonality followed by completing the square gives, for
$\bfm\neq\bfo$,
\begin{equation}\label{eq:ffr-general-cone-eigenvalue}
\lambda(\bfm)
=\frac{\mathfrak g_p^n}{p}
\sum_{t\in\bF_p^*}
\eta(t)^n
e_p\left(-\frac{\|\bfm\|}{4t}\right).
\end{equation}
If $n$ is even, the last sum has modulus $p-1$ when
$\|\bfm\|=0$ and modulus $1$ otherwise.  If $n$ is odd, it is zero
when $\|\bfm\|=0$ and is a quadratic Gauss sum of modulus $p^{1/2}$
otherwise. Hence
\begin{equation}\label{eq:ffr-general-cone-spectrum}
\max_{\bfm\neq\bfo}|\lambda(\bfm)|
\leq p^{\lfloor n/2\rfloor}.
\end{equation}
The same calculation at $\bfm=\bfo$, now including the term $t=0$,
also gives
\begin{equation}\label{eq:ffr-general-cone-size}
\frac{|S_0|}{p^n}\leq p^{-1}+p^{-n/2}.
\end{equation}

For completeness, the evaluations just used follow after the change
of variables $u=t^{-1}$.  When $n$ is even, the sum in
\eqref{eq:ffr-general-cone-eigenvalue} becomes
$\sum_{u\neq0}e_p(-\|\bfm\|u/4)$, which equals $p-1$ if
$\|\bfm\|=0$ and $-1$ otherwise.  When $n$ is odd, it becomes a
quadratic Gauss sum if $\|\bfm\|\neq0$ and vanishes if
$\|\bfm\|=0$.

Write $h=H/p^n+h_0$, where $\sum_{\mathbf a}h_0(\mathbf a)=0$.
By \eqref{eq:ffr-general-cone-spectrum}, the spectral theorem, and
$0\leq h\leq1$,
\begin{align*}
\sum_{\mathbf a,\bfb}
h(\mathbf a)h(\bfb)1_{S_0}(\mathbf a-\bfb)
&=
\frac{|S_0|}{p^n}H^2
+\langle\mathcal A h_0,h_0\rangle\\
&\leq
\frac{|S_0|}{p^n}H^2
+|\langle\mathcal A h_0,h_0\rangle|\\
&\leq
\bigl(p^{-1}+p^{-n/2}\bigr)H^2
+p^{\lfloor n/2\rfloor}\|h_0\|_2^2\\
&\leq
\bigl(p^{-1}+p^{-n/2}\bigr)H^2
+p^{\lfloor n/2\rfloor}\sum_{\mathbf a}h(\mathbf a)^2\\
&\leq
\bigl(p^{-1}+p^{-n/2}\bigr)H^2
+p^{\lfloor n/2\rfloor}H.
\end{align*}
Here we used
$\|h_0\|_2^2=\sum_{\mathbf a}h(\mathbf a)^2-H^2/p^n$.
This proves \eqref{eq:ffr-general-zero-cone}.  Since $p\geq3$ and
$n\geq2$, one has $p^{-1}+p^{-n/2}\leq2/3$.  If
$H\geq6p^{\lfloor n/2\rfloor}$, the last term in
\eqref{eq:ffr-general-zero-cone} is at most $H^2/6$. Hence, the
zero-distance sum is at most $5H^2/6$, and subtracting it from $H^2$
proves \eqref{eq:ffr-general-nonzero-pairs}.
\end{proof}

\begin{proof}[Proof of Lemma \ref{lem:ffr-general-weighted-transfer}]
We first establish the following estimate
\begin{equation}\label{eq:ffr-general-energy-bridge}
\cN_n^\times(E)
\leq
p^{(3n-1)(r-1)}\mathcal T_{p,n}^{\times}(h_E)
+|E|q^{2n-1}p^{2-2n}.
\end{equation}
Let us see first how this comparison proves the lemma.

Assume
\[
|E|\geq C_4p^{n(r-1)+\beta},
\qquad
C_4:=\max\{C_0',6\}.
\]
Then
\[
H_E\geq C_4p^\beta
\geq\max\{C_0'p^\beta,\,6p^{\lfloor n/2\rfloor},\,p\}.
\]
Thus, the hypothesis gives
\[
\mathcal T_{p,n}^{\times}(h_E)
\leq C_1'\frac{H_E^3}{p}.
\]
Therefore
\begin{align}
p^{(3n-1)(r-1)}
\mathcal T_{p,n}^{\times}(h_E)
&\leq
C_1'p^{(3n-1)(r-1)}\frac{H_E^3}{p}\notag=C_1'\frac{|E|^3}{q}.
\label{eq:ffr-general-low-main}
\end{align}
Moreover,
\[
\frac{|E|q^{2n-1}p^{2-2n}}{|E|^3/q}
=\frac{p^2}{H_E^2}\leq1.
\]
It follows from \eqref{eq:ffr-general-energy-bridge} that
\begin{equation}\label{eq:ffr-general-final-energy}
\cN_n^\times(E)\leq(C_1'+1)\frac{|E|^3}{q}.
\end{equation}

Arguing as the previous section, we obtain
\[
|\Delta_{n,r}(E)\cap U_r|
\geq\frac{q}{36(C_1'+1)}.
\]

It remains to prove \eqref{eq:ffr-general-energy-bridge}.  We use the
conductor decomposition introduced above.  For
$\chi\in\widehat{U_r}$, define
\[
S_\chi(\bfx)
:=\sum_{\bfy\in E}\chi(\|\bfx-\bfy\|).
\]
For $u,v\in\bZ/p^r\bZ$, multiplicative character orthogonality,
with every character extended by zero away from $U_r$, reads
\[
\frac{1}{\varphi(p^r)}
\sum_{\chi\in\widehat{U_r}}
\chi(u)\overline{\chi(v)}
=\mathbf 1_{\{u=v\in U_r\}}.
\]
Expanding $\nu_{\bfx}(j)^2$ and applying this identity gives
\begin{equation}\label{eq:ffr-general-character-energy}
\cN_n^\times(E)
=
\frac{1}{\varphi(p^r)}
\sum_{\chi\in\widehat{U_r}}
\sum_{\bfx\in E}|S_\chi(\bfx)|^2.
\end{equation}
Let $\cN_{n,\mathrm{low}}^\times(E)$ denote the part of
\eqref{eq:ffr-general-character-energy} coming from characters that
factor through $U_1=\bF_p^*$, and let
$\cN_{n,\mathrm{high}}^\times(E)$ denote the sum over characters of
exact conductor $p^s$, $2\leq s\leq r$.  Thus
\[
\cN_n^\times(E)
=\cN_{n,\mathrm{low}}^\times(E)
+\cN_{n,\mathrm{high}}^\times(E).
\]

We first compute the low-conductor term.  If
$\bfx\equiv\mathbf a\,(\mod p)$ and $\chi$ is induced by
$\chi_1\in\widehat{\bF_p^*}$, grouping $\bfy$ by its residue class
gives
\begin{equation}\label{eq:ffr-general-low-S}
S_\chi(\bfx)
=p^{n(r-1)}
\sum_{\bfb\in\bF_p^n}
h_E(\bfb)\chi_1(\|\mathbf a-\bfb\|).
\end{equation}
There are $p^{n(r-1)}h_E(\mathbf a)$ choices of $\bfx\in E$ above
$\mathbf a$.  Substituting \eqref{eq:ffr-general-low-S} and using
orthogonality on $\bF_p^*$ in the form
\[
\sum_{\chi_1\in\widehat{\bF_p^*}}
\chi_1(u)\overline{\chi_1(v)}
=(p-1)\mathbf 1_{\{u=v\neq0\}},
\qquad u,v\in\bF_p,
\]
we obtain
\begin{align}
\cN_{n,\mathrm{low}}^\times(E)
&=
\frac{p^{3n(r-1)}}{\varphi(p^r)}
\sum_{\chi_1\in\widehat{\bF_p^*}}
\sum_{\mathbf a\in\bF_p^n}
h_E(\mathbf a)
\left|
\sum_{\bfb\in\bF_p^n}
h_E(\bfb)\chi_1(\|\mathbf a-\bfb\|)
\right|^2\notag\\
&=
\frac{p^{3n(r-1)}(p-1)}{\varphi(p^r)}
\mathcal T_{p,n}^{\times}(h_E)\notag\\
&=
p^{(3n-1)(r-1)}
\mathcal T_{p,n}^{\times}(h_E).
\label{eq:ffr-general-low-exact}
\end{align}
This identity is the precise point at which the finite field energy
enters the ring argument.

We next treat the high-conductor term.  Fix $2\leq s\leq r$ and a
character $\chi$ of exact conductor $p^s$.  Put
\[
f:=1_E,
\qquad
K_\chi(\bfz):=\chi(\|\bfz\|).
\]
With the normalized convolution from Section~2,
\[
S_\chi=q^n(f\ast K_\chi),
\qquad
\widehat{S_\chi}(\bfm)
=q^n\widehat f(\bfm)\widehat{K_\chi}(\bfm).
\]
Parseval's identity and
Lemma~\ref{lem:ffr-general-multiplier} below give
\begin{align}
\sum_{\bfx\in E}|S_\chi(\bfx)|^2
&\leq
\sum_{\bfx\,(\mod p^r)}|S_\chi(\bfx)|^2\notag\\
&=
q^{3n}\sum_{\bfm\,(\mod p^r)}
|\widehat f(\bfm)|^2|\widehat{K_\chi}(\bfm)|^2\notag\\
&\leq
q^{3n}p^{-sn}\frac{|E|}{q^n}
=q^{2n}p^{-sn}|E|.
\label{eq:ffr-general-one-high-character}
\end{align}
The number of characters that factor through $U_s$ is
$|\widehat{U_s}|=\varphi(p^s)$. Hence, the number of characters of
exact conductor $p^s$ is
\[
\varphi(p^s)-\varphi(p^{s-1})
=(p-1)^2p^{s-2}.
\]
Consequently,
\begin{align}
\cN_{n,\mathrm{high}}^\times(E)
&\leq
\frac{q^{2n}|E|}{\varphi(p^r)}
\sum_{s=2}^r(p-1)^2p^{s-2-sn}\notag\\
&=
|E|q^{2n-1}p^{2-2n}
\frac{(1-p^{-1})
\bigl(1-p^{-(n-1)(r-1)}\bigr)}
{1-p^{-(n-1)}}\notag\\
&\leq
|E|q^{2n-1}p^{2-2n}.
\label{eq:ffr-general-high-bound}
\end{align}
Here we used
\[
\sum_{s=2}^r p^{s-2-sn}
=p^{-2n}
\frac{1-p^{-(n-1)(r-1)}}{1-p^{-(n-1)}},
\]
and the factor multiplying
$|E|q^{2n-1}p^{2-2n}$ in the middle line of
\eqref{eq:ffr-general-high-bound} is at most one because $n\geq2$.
When $r=1$, the high-conductor sum is empty and the same conclusion
holds.

Combining \eqref{eq:ffr-general-low-exact} and
\eqref{eq:ffr-general-high-bound} proves
\eqref{eq:ffr-general-energy-bridge}, and hence the lemma with
$C_4=\max\{C_0',6\}$ and
$c_4=1/(36(C_1'+1))$.
\end{proof}

\subsection*{Quadratic Gauss transforms in arbitrary dimensions}

We now prove the multiplier estimate used above.  The argument is
uniform in the dimension: additive Fourier inversion in the value of
the quadratic form separates the $n$ coordinates, and each coordinate
contributes a one-dimensional quadratic Gauss sum.

We first record the scalar multiplicative Gauss sum needed in the
argument.  The modulus at a unit frequency is the specialization of
\cite[Lemma~5.1(ii)]{Nica} to $R=\bZ/p^s\bZ$; we include the
elementary proof because it also gives the required vanishing at
nonunit frequencies.

\begin{lemma}
\label{lem:ffr-general-primitive-gauss}
Let $s\geq2$, and let $\theta\in\widehat{U_s}$ have exact conductor
$p^s$.  For $c\in R_s$, define
\[
\mathfrak G_s(\theta;c)
:=
\sum_{u\in U_s}\theta(u)e_{p^s}(cu).
\]
Then
\[
\mathfrak G_s(\theta;c)
=
\overline{\theta(c)}\,\mathfrak G_s(\theta;1)
\qquad(c\in U_s),
\]
whereas
\[
\mathfrak G_s(\theta;c)=0
\qquad(c\in pR_s).
\]
Moreover,
\[
\left|\mathfrak G_s(\theta;1)\right|=p^{s/2}.
\]
Consequently,
\begin{equation}\label{eq:ffr-general-primitive-gauss-size}
\left|\mathfrak G_s(\theta;c)\right|
=
\begin{cases}
p^{s/2},&c\in U_s,\\
0,&c\in pR_s.
\end{cases}
\end{equation}
\end{lemma}

\begin{proof}
Put
\[
V_s:=\ker(U_s\longrightarrow U_{s-1})
=1+p^{s-1}R_s.
\]
Since $\theta$ has exact conductor $p^s$, its restriction to $V_s$ is
nontrivial.

Suppose first that $c\in pR_s$.  Choose $\varepsilon\in V_s$ with
$\theta(\varepsilon)\neq1$.  For every $u\in U_s$, one has
\[
c(\varepsilon-1)u=0
\qquad\text{in }R_s.
\]
Multiplication by $\varepsilon$ permutes $U_s$, and hence
\[
\mathfrak G_s(\theta;c)
=\theta(\varepsilon)\mathfrak G_s(\theta;c).
\]
Thus, $\mathfrak G_s(\theta;c)=0$.

If $c\in U_s$, the substitution $v=cu$ gives
\[
\mathfrak G_s(\theta;c)
=\overline{\theta(c)}\,\mathfrak G_s(\theta;1).
\]
It remains to evaluate the modulus at $c=1$.  For $a\in R_s$, put
\[
C_s(a):=\sum_{v\in U_s}e_{p^s}(av).
\]
Writing the unit sum as the sum over all residue classes minus the sum
over the multiples of $p$ gives
\[
C_s(a)
=p^s\mathbf 1_{\{a=0\text{ in }R_s\}}
-p^{s-1}\mathbf 1_{\{a\in p^{s-1}R_s\}}.
\]
Writing $u=tv$, we therefore obtain
\begin{align*}
\left|\mathfrak G_s(\theta;1)\right|^2
&=\sum_{t\in U_s}\theta(t)C_s(t-1)\\
&=\varphi(p^s)
-p^{s-1}\sum_{\substack{t\in V_s\\t\neq1}}\theta(t).
\end{align*}
Since $\theta|_{V_s}$ is nontrivial,
\[
\sum_{t\in V_s}\theta(t)=0.
\]
The last restricted sum is therefore $-1$, and hence
\[
\left|\mathfrak G_s(\theta;1)\right|^2
=\varphi(p^s)+p^{s-1}
=p^s.
\]
This proves the lemma.
\end{proof}

\begin{lemma}
\label{lem:ffr-general-multiplier}
Let $p$ be odd, let $n\geq2$, let $2\leq s\leq r$, and let
$\chi\in\widehat{U_r}$ have exact conductor $p^s$.  Define
\[
K_\chi(\bfz):=\chi(\|\bfz\|),
\qquad \bfz\in R_r^n,
\]
where $\chi$ is extended by zero to $R_r$.
Then $\widehat{K_\chi}(\bfm)=0$ unless
\[
\bfm=p^{r-s}\widetilde{\bfm}
\quad\text{for some}\quad
\widetilde{\bfm}\in(\bZ/p^s\bZ)^n.
\]
At such a frequency,
\begin{equation}\label{eq:ffr-general-multiplier-exact}
\left|
\widehat{K_\chi}
\bigl(p^{r-s}\widetilde{\bfm}\bigr)
\right|
=
\begin{cases}
p^{-sn/2},
&\|\widetilde{\bfm}\|\in U_s,\\
0,
&\|\widetilde{\bfm}\|\in pR_s.
\end{cases}
\end{equation}
In particular,
\[
\|\widehat{K_\chi}\|_{\ell^\infty((\bZ/p^r\bZ)^n)}
\leq p^{-sn/2}.
\]
\end{lemma}

\begin{proof}
Since $\chi$ factors through $U_s$, there is a unique
$\chi_s\in\widehat{U_s}$ such that
\[
\chi=\chi_s\circ\rho_{r,s}.
\]
The exact-conductor hypothesis says that $\chi_s$ has exact conductor
$p^s$.  In particular, $K_\chi$ depends only on
$\bfz\,(\mod p^s)$.  Summing over the lifts of each residue class
modulo $p^s$ shows that its Fourier transform vanishes unless
$\bfm=p^{r-s}\widetilde{\bfm}$.  At a frequency of this form, the
normalized coefficient at level $r$ is exactly the normalized
coefficient at level $s$ formed with $\chi_s$.  We may therefore
assume that $r=s$ and, to simplify notation, write $\chi$ for
$\chi_s$.

Indeed, writing $\bfz=\bfz_0+p^s\bfw$, the sum over
$\bfw\,(\mod p^{r-s})$ contains the factor
\[
\prod_{i=1}^n
\sum_{w_i\,(\mod p^{r-s})}
e_{p^{r-s}}(-m_iw_i),
\]
which vanishes unless every $m_i$ is divisible by $p^{r-s}$.  If
$\bfm=p^{r-s}\widetilde{\bfm}$, the lift multiplicity
$p^{n(r-s)}$ cancels the corresponding part of the normalization
$p^{-rn}$, leaving precisely the normalized level-$s$ coefficient.

Let $\eta$ denote the quadratic character of $\bF_p^*$, and also its
lift $\eta\circ\rho_{s,1}$ to $U_s$.  For $a\in U_s$, put
\[
g_s(a):=\sum_{x\,(\mod p^s)}e_{p^s}(ax^2).
\]
We first record the elementary evaluation
\begin{equation}\label{eq:ffr-general-additive-quadratic}
g_s(a)=\eta(a)^s g_s(1),
\qquad
|g_s(1)|=p^{s/2}.
\end{equation}
Indeed, writing $x=y+p^{s-1}z$, with
$y\,(\mod p^{s-1})$ and $z\,(\mod p)$, and summing first in $z$ gives
\[
\sum_{z\,(\mod p)}
e_p(2ayz)
=
\begin{cases}
p,&p\mid y,\\
0,&p\nmid y.
\end{cases}
\]
Writing $y=py'$ in the surviving terms therefore gives
\[
g_s(a)=p\,g_{s-2}(a\bmod p^{s-2})
\qquad(s\geq2),
\]
where $g_0=1$.  At level one,
\[
g_1(a)=\eta(a)g_1(1),
\qquad |g_1(1)|=p^{1/2},
\]
by the finite field quadratic Gauss-sum calculation.  For completeness,
extend $\eta$ by $\eta(0)=0$.  Since the number of solutions of
$x^2=t$ is $1+\eta(t)$ and
$\sum_{t\in\bF_p}e_p(at)=0$ for $a\neq0$, one has
\[
g_1(a)
=\sum_{t\in\bF_p}(1+\eta(t))e_p(at)
=\sum_{t\in\bF_p}\eta(t)e_p(at)
=\eta(a)\sum_{t\in\bF_p}\eta(t)e_p(t).
\]
If $\tau(\eta):=\sum_t\eta(t)e_p(t)$, then
\begin{align*}
|\tau(\eta)|^2
&=
\sum_{u\in\bF_p^*}\eta(u)
\sum_{y\in\bF_p^*}e_p((u-1)y)\\
&=(p-1)-\sum_{\substack{u\in\bF_p^*\\u\neq1}}\eta(u)
=p.
\end{align*}
Thus, $|g_1(1)|=p^{1/2}$.  Iterating the recurrence proves
\eqref{eq:ffr-general-additive-quadratic}.

Define the primitive multiplicative Gauss sum with positive additive
sign by
\[
\tau_+(\overline\chi)
:=
\sum_{a\in U_s}\overline{\chi(a)}e_{p^s}(a)
=\mathfrak G_s(\overline\chi;1).
\]
The character $\overline\chi$ also has exact conductor $p^s$, and
Lemma~\ref{lem:ffr-general-primitive-gauss} gives
\[
|\tau_+(\overline\chi)|=p^{s/2}.
\]
If $t\in U_s$, the change of variables $w=at$ gives
\[
\sum_{a\in U_s}\overline{\chi(a)}e_{p^s}(at)
=\chi(t)\tau_+(\overline\chi).
\]
If $t\in pR_s$, the sum on the left is
$\mathfrak G_s(\overline\chi;t)$, which vanishes by the same lemma.
Hence, under the zero-extension convention, one has for every
$t\in R_s$ the inversion formula
\begin{equation}\label{eq:ffr-general-multiplicative-inversion}
\chi(t)
=
\frac{1}{\tau_+(\overline\chi)}
\sum_{a\in U_s}
\overline{\chi(a)}e_{p^s}(at).
\end{equation}

Let $\mathcal K_\chi(\bfm)$ denote the unnormalized Fourier transform
of $K_\chi$ at level $s$.  Inserting
\eqref{eq:ffr-general-multiplicative-inversion}, separating the
$n$ coordinates, and completing the square in each coordinate, we
obtain
\begin{align}
\mathcal K_\chi(\bfm)
&:=
\sum_{\bfz\,(\mod p^s)}
\chi(\|\bfz\|)e_{p^s}(-\bfm\cdot\bfz)\notag\\
&=
\frac{1}{\tau_+(\overline\chi)}
\sum_{a\in U_s}\overline{\chi(a)}
\prod_{i=1}^n
\sum_{z_i\,(\mod p^s)}
e_{p^s}(az_i^2-m_i z_i)\notag\\
&=
\frac{g_s(1)^n}{\tau_+(\overline\chi)}
\sum_{a\in U_s}
\overline{\chi(a)}\eta(a)^{sn}
e_{p^s}\left(-\frac{\|\bfm\|}{4a}\right).
\label{eq:ffr-general-completed-square}
\end{align}
After the substitution $b=a^{-1}$, the final sum in
\eqref{eq:ffr-general-completed-square} becomes
\[
\mathfrak G_s
\left(\chi\eta^{sn};-\frac{\|\bfm\|}{4}\right).
\]
The lifted character $\eta$ is trivial on
$V_s=\ker(U_s\longrightarrow U_{s-1})$.  Consequently,
$\chi\eta^{sn}$ has the same nontrivial restriction to $V_s$ as
$\chi$, and hence also has exact conductor $p^s$.
Lemma~\ref{lem:ffr-general-primitive-gauss} therefore shows that the
last sum vanishes when $\|\bfm\|\in pR_s$ and has modulus $p^{s/2}$
when $\|\bfm\|\in U_s$.  In the latter case,
\[
|\mathcal K_\chi(\bfm)|
=
\frac{p^{sn/2}}{p^{s/2}}\,p^{s/2}
=p^{sn/2}.
\]
Dividing by $p^{sn}$, as required by the normalized Fourier
transform, proves \eqref{eq:ffr-general-multiplier-exact}.
\end{proof}

\begin{remark}
The restriction $s\geq2$ is essential: at level $s=1$, the twist by
$\eta^{sn}$ can destroy primitivity.  In the transfer argument, the
characters factoring through $U_1$ are therefore kept in the
low-conductor contribution.
\end{remark}

\subsection{Proof of Lemma \ref{lem:ffr-general-randomization}}
The reduction modulo \(p\) naturally produces a weight \(h_E\), whereas
the finite field hypothesis is stated for subsets of \(\bF_p^n\).  We
bridge this gap by independently sampling a set \(A\), using \(h(\mathbf
a)\) as the probability that \(\mathbf a\) belongs to \(A\).  Since the
full energy includes the terms with \(\bfb=\bfc\), the expected sampled
energy is slightly larger than \(\mathcal T_{p,n}^{\times}(h)\), which
gives precisely the one-sided inequality needed below.  A lower-tail
estimate then controls the probability that the sampled set is too
small for the assumed set estimate to apply.
\begin{proof}
Choose a random subset $A\subset\bF_p^n$ by including each point
$\mathbf a$ independently with probability $h(\mathbf a)$, and put
$X:=|A|$.  For a triple with $\bfb\neq\bfc$, the nonzero common
squared length ensures that its three vertices are distinct, so its
expected indicator is $h(\mathbf a)h(\bfb)h(\bfc)$.  When
$\bfb=\bfc$, the apex and endpoint are distinct, and the expected
indicator is $h(\mathbf a)h(\bfb)$, whereas the corresponding term in
$\mathcal T_{p,n}^{\times}(h)$ is
$h(\mathbf a)h(\bfb)^2$.  Consequently,
\begin{align}
\mathbb E\mathcal T_{p,n}^{\times}(A)
-\mathcal T_{p,n}^{\times}(h)
&=
\sum_{\mathbf a,\bfb\in\bF_p^n}
h(\mathbf a)h(\bfb)(1-h(\bfb))
\mathbf 1_{\{\|\mathbf a-\bfb\|\neq0\}}
\geq0.
\label{eq:ffr-general-randomization-comparison}
\end{align}
In particular,
\begin{equation}\label{eq:ffr-general-randomization-upper}
\mathcal T_{p,n}^{\times}(h)
\leq\mathbb E\mathcal T_{p,n}^{\times}(A).
\end{equation}

We use the elementary identity
\[
X^3=X(X-1)(X-2)+3X(X-1)+X.
\]
Taking expectations and using independence yields
\[
\mathbb E X^3\leq H^3+3H^2+H\leq5H^3,
\]
since $H\geq1$.

Let $L:=C_0p^\beta$, and take
\[
H\geq C_3p^\beta,
\qquad
C_3=\max\{2C_0,3\}.
\]
Then $L\leq H/2$.  We shall use the elementary lower-tail estimate
\begin{equation}\label{eq:ffr-general-lower-tail}
\mathbb P(X<H/2)\leq e^{-H/8}.
\end{equation}
For completeness, Markov's inequality and independence give
\begin{align*}
\mathbb P(X<H/2)
&\leq
e^{H/2}\mathbb E e^{-X}=
e^{H/2}
\prod_{\mathbf a\in\bF_p^n}
\bigl(1-h(\mathbf a)+h(\mathbf a)e^{-1}\bigr)\\
&\leq
\exp\bigl(H/2-(1-e^{-1})H\bigr)
\leq e^{-H/8},
\end{align*}
which proves \eqref{eq:ffr-general-lower-tail}.

Since $L\leq H/2$, the event $\{X<L\}$ is contained in
$\{X<H/2\}$. Hence
\[
\mathbb P(X<L)\leq e^{-H/8}.
\]
On the event $X\geq L$, the set estimate
assumed in the lemma applies.  On its complement, the
trivial estimate
$\mathcal T_{p,n}^{\times}(A)\leq X^3\leq L^3$ applies. Hence
\begin{align*}
\mathcal T_{p,n}^{\times}(h) \leq
\mathbb E\mathcal T_{p,n}^{\times}(A)\leq
\frac{C_1}{p}\mathbb E X^3
+L^3\mathbb P(X<L)\leq
5C_1\frac{H^3}{p}
+H^3e^{-H/8}.
\end{align*}
Since $\beta\geq1$, our choice $H\geq3p^\beta$ implies
$e^{-H/8}\leq p^{-1}$ for every odd prime $p$; indeed,
$H/8\geq3p/8\geq\log p$ for $p\geq3$.  Therefore
\[
\mathcal T_{p,n}^{\times}(h)
\leq(5C_1+1)\frac{H^3}{p},
\]
which proves
\eqref{eq:ffr-general-weighted-input}.
\end{proof}

\section{Proof of Theorems \ref{rectangle}, \ref{cycle}, \ref{chain}, and \ref{tree}}

In this section, we write $q=p^r$ for simplicity. To prove Theorems \ref{rectangle} and \ref{chain}, we use a general framework developed for pseudo-random graphs in which Lemma \ref{lemma_fourier} plays a crucial role.

\paragraph{Proof of Theorem \ref{rectangle}:} Let $G=(V, E)$ be a graph with the vertex set $V$ and the edge set $E$. Let $M$ be its adjacency matrix, i.e., $M_{ij}=1$ if and only if there is an edge between $i$ and $j$ and zero otherwise.
Let $\lambda_1\ge \lambda_2\ge \ldots\ge \lambda_{|V|} $ be its eigenvalues. Assume that $|V|=N$, the graph $G$ is called a $(N, d, \lambda)$-graph if each vertex is of degree $d$ and the second eigenvalue, determined by $\max\{\lambda_2, -\lambda_N\}$, is at most $\lambda$.

Let $G$ and $H$ be two graphs. The Cartesian product of $G$ and $H$, denoted by $G\square H$, is defined as follows. The vertex set $V(G\square H)=V(G)\times V(H)$, and there is an edge between $(u_1, v_1)$ and $(u_2, v_2)$ if and only if either $u_1=u_2$ and $(v_1, v_2) \in E(H)$ or $v_1=v_2$ and $(u_1, u_2) \in V(G)$.

We recall the following result on the number of rectangles in a Cartesian product of graphs in \cite{TP}.

\begin{theorem}\label{thm:CS}
Let $G_i$ be $(N_i ,d_i, \lambda_i)$-graphs with $1\le i\le 2$. Set $G=G_1\square G_2$.
For any $0<\delta'<\delta<1$, there exists 
$\epsilon>0$ such that for any $A\subset V(G_1\square G_2)$ with $|A|\ge \delta |V(G_1\square G_2)|$, if $\max\left\{ \frac{\lambda_1}{d_1}, \frac{\lambda_2}{d_2}\right\} < \epsilon$, then
\[N=\sum_{(u_1, u_2)\in E(G_1), (v_1,v_2)\in E(G_2)}A(u_1, v_1)A(u_1, v_2)A(u_2, v_1)A(u_2, v_2)>\delta'^4 N_1N_2d_1d_2.\]
\end{theorem}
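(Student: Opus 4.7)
The plan is to iterate the expander mixing lemma once for each factor of the Cartesian product, reducing the four-fold count to an expression in $|A|$ alone. Writing $A\colon V(G_1)\times V(G_2)\to\{0,1\}$ for the indicator of the dense set, $M_i$ for the adjacency matrix of $G_i$, and grouping the sum by the outer pair $(u_1,u_2)$ first, one has
$$
N=\sum_{u_1,u_2}M_1(u_1,u_2)\,\bigl\langle M_2\,c_{u_1,u_2},\,c_{u_1,u_2}\bigr\rangle,\qquad c_{u_1,u_2}(v)=A(u_1,v)A(u_2,v).
$$
The spectral decomposition $M_2=\tfrac{d_2}{N_2}J+R_2$ with $\|R_2\|\le\lambda_2$ yields the pointwise bound $\langle M_2 c,c\rangle\ge \tfrac{d_2}{N_2}\bigl(\sum_v c(v)\bigr)^2-\lambda_2\sum_v c(v)^2$. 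Since $c$ is $\{0,1\}$-valued and $\sum_v c_{u_1,u_2}(v)=s_{u_1,u_2}:=|A_{u_1}\cap A_{u_2}|$ with $A_u=\{v:(u,v)\in A\}$, this reduces the problem to bounding
$$
N\;\ge\;\tfrac{d_2}{N_2}\sum_{u_1,u_2}M_1(u_1,u_2)\,s_{u_1,u_2}^2\;-\;\lambda_2\sum_{u_1,u_2}M_1(u_1,u_2)\,s_{u_1,u_2}.
$$

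For the leading quadratic sum I would apply Cauchy--Schwarz weighted by $M_1$, bounding it from below by $\bigl(\sum M_1 s\bigr)^2/\sum M_1 = \bigl(\sum_v\langle M_1 a_v,a_v\rangle\bigr)^2/(N_1 d_1)$, where $a_v(u)=A(u,v)$. A second application of the expander mixing lemma, now for $G_1$, together with Cauchy--Schwarz $\sum_v|A^v|^2\ge|A|^2/N_2$ and the hypothesis $|A|\ge\delta N_1 N_2$, lower-bounds the numerator by $\bigl(\delta N_1 N_2(d_1\delta-\lambda_1)\bigr)^2$. For the linear error term the same spectral lemma (applied edge-by-edge) gives the trivial upper bound $\sum M_1 s\le d_1|A|=\delta d_1 N_1 N_2$. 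Assembling the two estimates,
$$
N\;\ge\;d_1 d_2 N_1 N_2\Bigl[\delta^2\bigl(\delta-\tfrac{\lambda_1}{d_1}\bigr)^2\;-\;\delta\cdot\tfrac{\lambda_2}{d_2}\Bigr].
$$

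Under the hypothesis $\max\{\lambda_1/d_1,\lambda_2/d_2\}<\epsilon$ the bracket tends to $\delta^4$ as $\epsilon\to 0$, and since $\delta>\delta'$ one may choose $\epsilon=\epsilon(\delta,\delta')>0$ small enough that the bracket exceeds $\delta'^4$, which gives the claimed inequality. The main obstacle is one of bookkeeping rather than genuine analysis: the two Cauchy--Schwarz/spectral steps must be done in the correct order (first weighted Cauchy--Schwarz with $M_1$, only then the spectral bound for $G_1$), and the small parameter $\epsilon$ must be propagated through both applications so that every error term is absorbed as a small multiplicative correction of the main $\delta^4$ contribution. A direct ``double'' expander mixing lemma for the 4-tensor would also work in principle, but the inductive, factor-by-factor scheme above keeps all error estimates linear in the $\lambda_i/d_i$ and thus transparent.
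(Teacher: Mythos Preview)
The paper does not prove this theorem; it is quoted from \cite{TP} and applied as a black box in the derivation of Theorem~\ref{rectangle}. There is therefore nothing in the present paper to compare your argument against.

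Your proposal is nonetheless a correct proof, and follows the standard route one would expect for this kind of statement: decompose $M_2=\tfrac{d_2}{N_2}J+R_2$ to peel off the inner quadratic form, then use weighted Cauchy--Schwarz against $M_1$ followed by a second spectral bound for $G_1$. One minor slip: when bounding the error term $\lambda_2\sum M_1 s$ you write $\sum M_1 s\le d_1|A|=\delta d_1 N_1N_2$, but the hypothesis is $|A|\ge\delta N_1N_2$, not equality. Either use the trivial $|A|\le N_1N_2$ in the error (yielding bracket $\delta^2(\delta-\lambda_1/d_1)^2-\lambda_2/d_2$), or note that $N$ is monotone in $A$ and so one may assume $|A|=\delta N_1N_2$ without loss of generality. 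Either fix leaves the conclusion intact, since the bracket still converges to $\delta^4>\delta'^4$ as $\epsilon\to0$.
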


Given $j\in (\mathbb{Z}/p^r\mathbb{Z})^*$. To prove Theorem \ref{rectangle}, we will apply the above theorem with appropriate graphs $G$ and $H$. More precisely, define $G=H$ with $V(G)=V(H)=(\mathbb{Z}/p^r\mathbb{Z})^n$, and there is an edge between two vertices $\bfx$ and $\bfy$ if and only if $F(\bfx-\bfy)\equiv j\,(\mod q)$. So $G$ and $H$ have $q^n$ vertices and are regular of degree $(1+o(1))q^{n-1}$. On the other hand, since $G$ and $H$ are Cayley graphs with the generating set $S=\{\bfx\in (\mathbb{Z}/p^r\mathbb{Z})^n\colon F(\bfx)\equiv j\,(\mod q)\}$. Thus, all non-trivial eigenvalues of $G$ and $H$ are bounded by $q^n\cdot \max_{\mathbf{m}\not\equiv\bfo\,(\mod q)}|\widehat{1_S}(\mathbf{m)}|\ll q^{n-1}p^{-\frac{n-1}{2}}$ by Lemma \ref{lemma_fourier}. In other words, $G$ and $H$ are $\big(q^n, (1+o(1))q^{n-1}, q^{n-1}p^{-\frac{n-1}{2}}\big)$-graphs.

Thus, Theorem \ref{rectangle} follows directly from Theorem \ref{thm:CS} with $A=E$.

\paragraph{Proof of Theorem \ref{chain}:} To prove Theorem \ref{chain}, we follow the same approach as in the proof of Theorem \ref{rectangle}. More precisely, we make use of the following result on $(N, d, \lambda)$-graphs, which is also taken from \cite{TP}.

\begin{theorem}\label{paths}
Let $G$ be a $(N, d, \lambda)$-graph, $\ell\ge 1$ an integer,  and $A$ be a vertex set with $|A|\gg_\ell \lambda \cdot \frac{N}{d} $. Let $P_\ell(U)$ denotes the number of paths of length $\ell$ in $A$. Then we have
\[P_\ell(E)\sim \frac{|A|^{\ell+1}d^\ell}{N^\ell}.\]
\end{theorem}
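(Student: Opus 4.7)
The plan is to prove Theorem \ref{paths} via spectral analysis of the adjacency matrix $M$ of $G$. Write $W_\ell(A)$ for the number of walks (with repeated vertices allowed) $v_0 v_1 \cdots v_\ell$ with every $v_i \in A$, and note that $W_\ell(A) = \mathbf{1}^{\top}(D_A M)^\ell \mathbf{1}_A$, where $D_A$ denotes the diagonal $\{0,1\}$-matrix selecting $A$. I will first estimate $W_\ell(A)$ and then pass to the path count $P_\ell(A)$ at the end by bounding walks with a repeated vertex.

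To extract the main term I will decompose each edge weight as $M_{uv} = d/N + a_{uv}$, where $a_{uv}$ records the discrepancy of $G$ from a complete graph of density $d/N$, and expand
\[
W_\ell(A) = \sum_{S \subseteq \{0,\ldots,\ell-1\}}\Big(\frac{d}{N}\Big)^{\ell-|S|}\sum_{v_0,\ldots,v_\ell \in A}\prod_{i\in S} a_{v_i v_{i+1}}.
\]
The $S=\emptyset$ term yields $(d/N)^\ell |A|^{\ell+1} = |A|^{\ell+1} d^\ell / N^\ell$, which is precisely the claimed main term. For each $S \neq \emptyset$, I would bound the inner sum by iteratively applying the expander mixing (Alon--Chung) inequality
\[
\Big|\sum_{u,v\in A} a_{uv}\, f(u) g(v)\Big| \le \lambda \|f\|_2 \|g\|_2 \le \lambda |A|,
\]
valid for functions $f,g$ supported on $A$. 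Each factor of $a$ in the product yields an additional saving of roughly $\lambda N/(d|A|)$ relative to the main term, so the hypothesis $|A| \gg_\ell \lambda N/d$ makes each of the finitely many ($2^\ell - 1$) error terms a small fraction of the main term.

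Finally, walks with a repeated vertex $v_i = v_j$ can be bounded by fixing $i < j$, splitting the walk into two shorter pieces glued at the common vertex, and applying the walk estimate at smaller length; this contribution is lower-order under the hypothesis and justifies replacing $W_\ell(A)$ by $P_\ell(A)$. The main obstacle will be the bookkeeping of the error terms in the expansion: when several consecutive $a$-factors share a variable $v_i$, the naive pairing in the mixing inequality becomes ambiguous, and one must identify an appropriate interval decomposition of $S$ (equivalently, cluster consecutive $a$-factors into a single quadratic form before applying mixing) in order to extract one factor of $\lambda$ per $a$-factor without over-counting or losing factors of $d$ through crude operator-norm bounds. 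For $\ell = 1$ this reduces to the classical Alon--Chung estimate, and for general $\ell$ a short inductive or recursive grouping argument controls the combinatorics of $S$, so that all errors collapse to the uniform threshold $|A| \gg_\ell \lambda N/d$ rather than a weaker bound like $|A| \gg_\ell N(\lambda/d)^{1/\ell}$ that a less careful argument would give.
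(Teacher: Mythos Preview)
The paper does not actually prove this statement: Theorem~\ref{paths} is quoted verbatim from the reference \cite{TP} and used as a black box in the proof of Theorem~\ref{chain}. So there is no ``paper's own proof'' to compare against; what you have written is a self-contained argument where the paper simply appeals to the literature.

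That said, your spectral approach is the standard and correct one. Writing $M=(d/N)J+E$ with $E\mathbf{1}=0$ and $\|E\|_{\mathrm{op}}\le\lambda$, and expanding $(D_A M)^\ell$ as a sum over subsets $S\subseteq\{0,\dots,\ell-1\}$, is exactly how such walk-counting results are typically proved (and is in fact how the cited paper \cite{TP} proceeds, by an equivalent inductive formulation). Your identification of the bookkeeping issue --- that consecutive $E$-factors must be grouped before applying the operator-norm bound so as to extract one genuine factor of $\lambda$ per occurrence --- is accurate, and the interval/cluster decomposition you describe resolves it.

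One remark on the final step. The passage from walks to simple paths is more delicate than your sketch suggests: bounding walks with $v_i=v_j$ via closed-walk counts requires, in addition to the hypothesis $|A|\gg_\ell \lambda N/d$, a mild lower bound such as $|A|^2\gg N$ (coming from the term $d^{k}/N$ in the closed-walk estimate). In the paper's application this is automatic, and in any case the downstream Theorem~\ref{chain} only asks for \emph{chains} $\bfu_1,\dots,\bfu_{k+1}$ with prescribed consecutive distances, with no requirement that the $\bfu_i$ be distinct --- so the walk count $W_\ell(A)$ already suffices and the walks-to-paths reduction is not needed.
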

With the graph $G$ defined as above, Theorem \ref{chain} follows from Theorem \ref{paths} immediately.

\paragraph{Proof of Theorem \ref{cycle}:}
We first recall the statement of Theorem \ref{main}: for $E\subset (\mathbb{Z}/p^r\mathbb{Z})^n$ if $|E|\gg q^np^{-\frac{n-1}{2}}$, then for any $j\in (\mathbb{Z}/p^r\mathbb{Z})^*$, the number of pairs $(\bfx, \bfy)\in E\times E$ such that $F(\bfx-\bfy)\equiv j\,(\mod q)$ is at least $\gg q^{-1}|E|^2$.

 By the pigeon-hole principle, there exists $\mathbf{u}\in (\mathbb{Z}/p^r\mathbb{Z})^n$ such that $F(\mathbf{u})\equiv j\,(\mod q)$ and the number of pairs $(\bfx, \bfy)\in E\times E$ such that $\bfx=\bfy+\mathbf{u}$ is at least $\gg \frac{|E|^2}{q^n}$. Let $E_1\subset E$ be the set of $\bfy\in E$ such that $\bfy+\bfu\in E$. Without loss of generality, we assume that $|E_1|=|E|^2/q^n$. By Theorem \ref{main} again, the number of pairs $(\bfy_1, \bfy_2)\in E_1\times E_2$ such that $F(\bfy_1-\bfy_2)\equiv j\,(\mod q)$ is at least $|E_1|^2/q=|E|^4/q^{2n+1}$. Among these pairs, the number of pairs with $\bfy_1=\bfy_2+\mathbf{u}$ is at most $|E_1|$, which is smaller than $|E_1|^2/q$. Therefore, by the pigeon-hole principle again, there exists $\mathbf{u}'\in (\mathbb{Z}/p^r\mathbb{Z})^n$ such that $\mathbf{u}\ne \mathbf{u}'$, $F(\mathbf{u}')\equiv j\,(\mod q)$, and the number of pairs $(\bfy_1, \bfy_2)\in E_1\times E_1$ such that $\bfy_1=\bfy_2+\mathbf{u}'$ is at least $\gg \frac{|E|^2}{q^n}>0$.

 With each pair $(\bfy_1, \bfy_2)$ satisfying the above property, we have a cycle of the from $(\bfy_1, \bfy_2, \bfy_2+\mathbf{u}, \bfy_1+\mathbf{u})$. This completes the proof.

\paragraph{Proof of Theorem \ref{tree}:}
To prove Theorem \ref{tree}, we need a stronger version of Theorem \ref{main}, namely, the pinned distance version.
\begin{theorem}\label{pin}
    Let $E_1, E_2\subset (\mathbb{Z}/p^r\mathbb{Z})^n$ with $|E_1|, |E_2|\gg q^np^{-\frac{n-1}{2}}$, then there exists $E_1'\subset E_1$ with $|E_1'|\gg |E_1|$ and $|\Delta_{n.r,\bfx}(E_2)|\gg q$ for all $\bfx\in E_1'$. Here
    \[\Delta_{n.r,\bfx}(E_2):=\{F(\bfx-\bfy)\colon \bfy\in E_2\}.\]
\end{theorem}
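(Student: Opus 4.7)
The plan is a standard second-moment (pigeonhole) argument. Setting $q = p^r$, for $\bfx \in E_1$ and $j \in \bZ/q\bZ$ let $N(\bfx, j) := \#\{\bfy \in E_2 : F(\bfx - \bfy) \equiv j \pmod{q}\}$, so that $\sum_j N(\bfx, j) = |E_2|$. Cauchy--Schwarz gives
\[
|\Delta_{n,r,\bfx}(E_2)| \;\geq\; \frac{|E_2|^2}{\sum_j N(\bfx, j)^2},
\]
so by Markov's inequality it suffices to prove the global second-moment bound
\[
T \;:=\; \sum_{\bfx \in E_1} \sum_j N(\bfx, j)^2 \;\ll\; \frac{|E_1||E_2|^2}{q}.
\]
Indeed, the set $E_1' := \{\bfx \in E_1 : \sum_j N(\bfx, j)^2 \leq C|E_2|^2/q\}$ then satisfies $|E_1'| \gg |E_1|$ for $C$ sufficiently large, and $|\Delta_{n,r,\bfx}(E_2)| \geq q/C \gg q$ for every $\bfx \in E_1'$.

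To estimate $T$, I would expand $N(\bfx, j) = M(j) + E(\bfx, j)$ by Fourier inversion as in the proof of Theorem~\ref{main}, with main term $M(j) = |S_{r,j}||E_2|/q^n$ and oscillating error $E(\bfx, j) = q^n \sum_{\bfm \neq \bfo} \widehat{1_{E_2}}(\bfm) \widehat{1_{S_{r,j}}}(\bfm) e_{q}(\bfm \cdot \bfx)$. Writing $N^2 = M^2 + 2ME + E^2$ decomposes $T$ into three pieces. The main-term piece $|E_1| \sum_j M(j)^2$ is $\ll |E_1||E_2|^2/q$ via the energy bound $\sum_j |S_{r,j}|^2 \ll q^{2n-1}$, obtainable by a routine Fourier/Hensel computation for a good $F$. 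The cross piece equals $2\sum_j M(j)(\cN_{r,j} - |E_1|M(j))$; it is absorbed using the error estimate $|\cN_{r,j} - |E_1|M(j)| \ll \sqrt{|E_1||E_2|}\,q^{n-1}p^{-(n-1)/2}$ from the proof of Theorem~\ref{main} together with $\sum_j M(j) = |E_2|$, and the density hypothesis is exactly what places it inside the target.

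The main obstacle is the error-squared piece $\Sigma := \sum_{\bfx \in E_1} \sum_j |E(\bfx, j)|^2$. My approach is to drop the indicator $1_{E_1} \leq 1$ and apply Parseval in the $\bfx$-variable to obtain
\[
\sum_\bfx |E(\bfx, j)|^2 \;=\; q^{3n} \sum_{\bfm \neq \bfo} |\widehat{1_{E_2}}(\bfm)|^2 |\widehat{1_{S_{r,j}}}(\bfm)|^2.
\]
Now insert the uniform Fourier decay $|\widehat{1_{S_{r,j}}}(\bfm)| \ll q^{-1} p^{-(n-1)/2}$ for $\bfm \neq \bfo$ supplied by Lemma~\ref{lemma_fourier} (both cases of that lemma collapse to this single estimate since condition (iv) of Theorem~\ref{main} gives $\kappa = (n-1)/2$ for a good $F$), together with $\sum_\bfm |\widehat{1_{E_2}}(\bfm)|^2 = |E_2|/q^n$. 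This produces $\sum_\bfx |E(\bfx, j)|^2 \ll q^{2n-2}p^{-(n-1)}|E_2|$, and summing over the $q$ values of $j$ gives $\Sigma \ll q^{2n-1}p^{-(n-1)}|E_2|$. The density hypothesis $|E_1|, |E_2| \gg q^n p^{-(n-1)/2}$, i.e.\ $|E_1||E_2| \gg q^{2n}p^{-(n-1)}$, is precisely what makes $\Sigma \ll |E_1||E_2|^2/q$. Combining the three bounds then establishes $T \ll |E_1||E_2|^2/q$ and hence the theorem. A minor technical point is that Lemma~\ref{lemma_fourier} is stated only for $j \in U_r$, so the $j \notin U_r$ contributions either need extensions of those Fourier bounds or a reduction of the Cauchy--Schwarz step to $j \in U_r$ alone, which is acceptable since $\Delta_{n,r,\bfx}(E_2) \supseteq \Delta_{n,r,\bfx}(E_2) \cap U_r$.
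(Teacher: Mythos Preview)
Your second-moment argument is correct and complete (modulo the $j\in U_r$ caveat you already flag, which is handled exactly as in the paper by restricting attention to unit distances from the outset). However, it is a genuinely different route from the paper's proof.

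The paper proceeds via an incidence bound rather than an $L^2$-energy bound. It introduces a point--sphere incidence theorem (Theorem~\ref{thm-incidence}): for a point set $\cP$ and a family $\cS$ of $F$-spheres with unit radii one has $I(\cP,\cS)\le |\cP||\cS|/q + Cq^{n-1/2}p^{-(n-1)/2}|\cP|^{1/2}|\cS|^{1/2}$, which is obtained by partitioning $\cS$ by radius, applying the counting estimate from Theorem~\ref{main} to each radius, and Cauchy--Schwarz over radii. Setting $\cS=\bigcup_{\bfx\in E_1}\cS(\bfx)$ with $\cS(\bfx)$ the spheres about $\bfx$ of radii in $\Delta_{n,r,\bfx}(E_2)\cap U_r$, so that $|\cS|=\sum_{\bfx}|\Delta_{n,r,\bfx}^*(E_2)|$, the lower bound $I(E_2,\cS)\gg |E_1||E_2|$ (again from Theorem~\ref{main}) forces $\sum_{\bfx\in E_1}|\Delta_{n,r,\bfx}^*(E_2)|\gg |E_1|q$; a final pigeonhole extracts $E_1'$. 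In short, the paper runs a \emph{first-moment} argument on $|\Delta_{n,r,\bfx}^*(E_2)|$, while you run a \emph{second-moment} argument on $N(\bfx,j)$. Your route avoids stating a separate incidence theorem and is the more classical $L^2$ approach in the Erd\H{o}s--Falconer literature; the paper's route is more modular, isolating the incidence estimate as a standalone tool that could be reused elsewhere. Both ultimately rest on the same Fourier decay bound $|\widehat{1_{S_{r,j}}}(\bfm)|\ll q^{-1}p^{-(n-1)/2}$ from Lemma~\ref{lemma_fourier}.
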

To prove this strong version, we introduce an incidence problem between points and $F$-spheres in $(\mathbb{Z}/p^r\mathbb{Z})^n$.

A $F$-sphere centered at $x$ of radius $j$ is the set
\[\{y\in (\mathbb{Z}/p^r\mathbb{Z})^n\colon F(x-y)\equiv j\mod q\}.\]

Let $\cP$ be a set of points in $(\mathbb{Z}/p^r\mathbb{Z})^n$ and $\cS$ be a set of $F$-spheres. The number of incidences between $\cP$ and $\cS$, denoted by $I(\cP, \cS)$, is defined by
\[I(\cP, \cS)=\#\{(p, s)\in \cP\times\cS\colon p\in s\}.\]
\begin{theorem}\label{thm-incidence}
    Let $\cP$ be a set of points in $(\mathbb{Z}/p^r\mathbb{Z})^n$ and $\cS$ be a set of $F$-spheres with radii in $(\mathbb{Z}/p^r\mathbb{Z})^*$. Then the number of incidences between $\cP$ and $\cS$ satisfies
    \[I(\cP, \cS)\le \frac{|\cP||\cS|}{q}+Cq^{n-\frac{1}{2}}p^{-\frac{n-1}{2}}|\cP|^{1/2}|cS|^{1/2}.\]
\end{theorem}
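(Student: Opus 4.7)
The plan is to establish this discrepancy bound by a Fourier-analytic argument that exploits the decay of $\widehat{\mathbf{1}_{S_{r,j}}}(\bfm)$ at nonzero frequencies given by Lemma \ref{lemma_fourier} (applicable because $F$ is a good polynomial and every radius is a unit). For each unit $j$, let $\cS_j=\{\bfc:(\bfc,j)\in \cS\}$ denote the set of centers of spheres in $\cS$ of radius $j$, so that $\sum_j|\cS_j|=|\cS|$. I would first rewrite
\[
I(\cP,\cS)=\sum_{j}\sum_{\bfx,\bfc}\mathbf{1}_\cP(\bfx)\,\mathbf{1}_{\cS_j}(\bfc)\,\mathbf{1}_{S_{r,j}}(\bfx-\bfc),
\]
with $S_{r,j}=\{\bfz:F(\bfz)\equiv j\,(\mod q)\}$, and then expand $\mathbf{1}_{S_{r,j}}$ by Fourier inversion and rearrange to arrive at
\[
I(\cP,\cS)=q^{2n}\sum_j\sum_\bfm \widehat{\mathbf{1}_{S_{r,j}}}(\bfm)\,\overline{\widehat{\mathbf{1}_\cP}(\bfm)}\,\widehat{\mathbf{1}_{\cS_j}}(\bfm).
\]

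The contribution from $\bfm=\bfo$ equals $|\cP|q^{-n}\sum_j|S_{r,j}||\cS_j|$. Condition (ii) of Theorem \ref{main} together with Hensel's lemma (exactly as in the proof of Theorem \ref{main}) yields $|S_{r,j}|=(1+O(1))q^{n-1}$ uniformly over units $j$, so this main term is at most $(1+o(1))|\cP||\cS|/q$ and is absorbed into the $|\cP||\cS|/q$ of the claimed bound up to the implicit constant $C$.

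For the $\bfm\neq\bfo$ error, Lemma \ref{lemma_fourier} supplies the uniform bound $|\widehat{\mathbf{1}_{S_{r,j}}}(\bfm)|\ll q^{-1}p^{-(n-1)/2}$, independent of $j$. After pulling this factor out, the key step is to apply Cauchy-Schwarz first in $j$ and only then in $\bfm$: the bound $\sum_j|\widehat{\mathbf{1}_{\cS_j}}(\bfm)|\le q^{1/2}\bigl(\sum_j|\widehat{\mathbf{1}_{\cS_j}}(\bfm)|^2\bigr)^{1/2}$ uses that at most $q$ distinct radii appear. Applying Parseval in the two forms $\sum_\bfm|\widehat{\mathbf{1}_\cP}(\bfm)|^2=|\cP|q^{-n}$ and $\sum_\bfm\sum_j|\widehat{\mathbf{1}_{\cS_j}}(\bfm)|^2=|\cS|q^{-n}$ then produces an error of order
\[
q^{2n}\cdot q^{-1}p^{-(n-1)/2}\cdot q^{1/2}\cdot q^{-n}|\cP|^{1/2}|\cS|^{1/2}=C\,q^{n-1/2}\,p^{-(n-1)/2}\,|\cP|^{1/2}|\cS|^{1/2},
\]
matching the statement.

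The proof faces no substantive obstacle beyond correctly sequencing the two Cauchy-Schwarz applications: a single unstructured Cauchy-Schwarz over $(j,\bfm)$ jointly would lose an extra factor of $q^{1/2}$ and break the bound. All harmonic-analytic input needed has already been assembled in Section 2, so the argument is essentially a bookkeeping exercise once the right order of operations is chosen.
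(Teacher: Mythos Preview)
Your proposal is correct and follows essentially the same route as the paper, only with the per-radius estimate unfolded to its Fourier-analytic core. The paper's proof is shorter because it invokes Theorem~\ref{main} as a black box: partitioning $\cS$ by radius $i$ into center-sets $\cS_i$, it quotes the bound
\[
I(\cP,\cS_i)\le \frac{|\cP||\cS_i|}{q}+q^{n-1}p^{-\frac{n-1}{2}}|\cP|^{1/2}|\cS_i|^{1/2}
\]
(which is exactly the $\cM+\cE$ estimate you reconstruct with Lemma~\ref{lemma_fourier}), then sums over $i$ and applies Cauchy--Schwarz in the form $\sum_i|\cS_i|^{1/2}\le q^{1/2}|\cS|^{1/2}$. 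Your two Cauchy--Schwarz steps (first in $j$, then in $\bfm$) versus the paper's ordering (first in $\bfm$ inside Theorem~\ref{main}, then in $j$) lead to the identical bound, so there is no substantive difference.
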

\begin{proof}
    We partition $\cS$ into $\cS_i$, $i\in (\mathbb{Z}/p^r\mathbb{Z})^*$, such that spheres in $\cS_i$ are of radius $i$. By abuse of notation, we denote the set of centers in $\cS_i$ by $\cS_i$. By Theorem \ref{main}, we have
    \[I(\cP, \cS_i)\le \frac{|\cP||\cS_i|}{q}+q^{n-1}p^{-\frac{n-1}{2}}|\cP|^{1/2}|\cS_i|^{1/2}.\]
Taking the sum over all $i$ and by using the Cauchy-Schwarz inequality, we have the conclusion.
\end{proof}

 \begin{proof}[Proof of Theorem \ref{pin}]
 For $\bfx\in E_1$, let $\Delta_{n,r,{\bfx}}^*(E_2)$ be the set of distances in $(\mathbb{Z}/p^r\mathbb{Z})^*$ between $\bfx$ and points in $E_2$. Let $\cS(\bfx)$ be the set of spheres centered at $\bfx$ of radii in $\Delta_{n.r,\bfx}^*(E_2)$, and set $\cS=\cup_{\bfx\in E_1}\cS(\bfx)$. We have $|\cS|=\sum_{\bfx\in E_1}|\Delta_{n,r,{\bfx}}^*(E_2)|$. Under the conditions on the sizes of $E_1$ and $E_2$, for any $j\in (\mathbb{Z}/p^r\mathbb{Z})^*$, it follows from Theorem \ref{main} that the number of pairs $(\bfx, \bfy)\in E_1\times E_2$ such that $F(\bfx-\bfy)\equiv j\,(\mod q)$ is at least $|E_1||E_2|/2q$.

Thus, $I(E_2, \cS)\ge |E_1||E_2|/4$. Applying Theorem \ref{thm-incidence} gives
\[I(E_2, \cS)\le \frac{|E_2|\sum_{\bfx\in E_1}|\Delta_{\bfx}(E_2)^*|}{q}+Cq^{n-\frac{1}{2}}p^{-\frac{n-1}{2}}|E_2|^{1/2}\left(\sum_{\bfx\in E_1}|\Delta_{n.r,\bfx}(E_2)^*|\right)^{1/2}.\]
If $\sum_{\bfx\in E_1}|\Delta_{n.r,\bfx}(E_2)^*|\le |E_1|q/8$, then $|E_1||E_2|\le 8C^2q^{2n}p^{-(n-1)}$, so we reach a contradiction as long as $|E_1||E_2|> 8C^2q^{2n}p^{-(n-1)}$.

Thus, if $|E_1||E_2|> 8C^2q^{2n}p^{-(n-1)}$, then we have
\begin{equation}\label{eq:contradiction}\sum_{\bfx\in E_1}|\Delta_{n.r,\bfx}(E_2)^*|>\frac{|E_1|q}{8}.\end{equation}

Set $E_1':=\{\bfx\in E_1\colon |\Delta_{\bfx}(E_2)^*|\ge q/32\}$. We are going to show that $|E_1'|\ge |E_1|/32$. Indeed, otherwise, we have
\[\sum_{\bfx\in E_1}|\Delta_{\bfx}(E_2)^*|=\sum_{\bfx\in E_1'}+\sum_{\bfx\in E_1\setminus E_1'}\le q|E_1'|+\frac{q|E_1|}{32}\le \frac{q|E_1|}{16},\]
which contradicts (\ref{eq:contradiction}). Therefore, we have proved that there exists $E_1'\subset E_1$ with $|E_1'|\ge |E_1|/32$ and for all $\bfx\in E_1'$, we have $|\Delta_{n.r,\bfx}(E_2)^*|\ge q/32$. The proof is complete.
\end{proof}

With Theorem \ref{pin} in hand, Theorem \ref{tree} now follows from a combinatorial argument which is identical to \cite[Proof of Theorem 3.1]{DM} with Theorem \ref{pin} in place of \cite[Lemma 2.2]{DM}. So we omit the details.

\section{Sharpness examples}

In this section, we provide some sharpness examples for results stated in the introduction.

\begin{example} \label{ex1}
Let $p$ be a large prime. Suppose that  $n$ is odd and $-1$ is a $k$-th power modulo $p$, i.e., $\xi^k\equiv -1\, (\mod p)$. We consider the polynomial $F(\bfx)=x_1^k+x_2^k+\ldots+x_n^k$.

For $r=1$, let us take
\[
\Pi_0=\{(u, \xi u):\, u\in \bZ/p\bZ\}.
\]
Then, for any $(u,\xi u)\in \Pi_0$, one has $\|(u,\xi u)\|=u^k+(\xi u)^k \equiv 0\, (\mod p)$.  This is a one-dimensional subspace of $(\bZ/p\bZ)^2$. We have $\|(u,\xi u) - (u',\xi u')\| = 0$ for any $(u,\xi u)$, $(u',\xi u')\in \Pi_0$. Let $1\leq l<p/2$ be a parameter to be determined later.  Take
\[
E_1=\big\{(x_1,x_2,\ldots,x_{n})\in \bZ/p\bZ:\, x_1\in \{1,2,\ldots,l\},\, (x_2,x_3),\ldots,(x_{n-1},x_n)\in \Pi_0\big\}.
\]
Then $|E_1|=lp^{\frac{n-1}{2}}$, and
\[
|\Delta_{n,1}(E_1)| = \#\left\{(-l+1)^k,(-l+2)^k,\ldots,(l-1)^k\right\} < 2l.
\]

For $r\geq 2$, let us take
\[
E_r=\big\{\bfx+p\bfy:\,  \bfx\in E_1,\,\bfy\in (\bZ/p^{r-1}\bZ)^n\big\}
\]
as a subset of $(\bZ/p^r\bZ)^n$. It is not hard to see that $|E_r|= lp^{nr-\frac{n+1}{2}}$. Note that, for any two elements $\bfz=\bfx+p\bfy,\, \bfz'=\bfx'+p\bfy'\in E_r$, we have  $\|\bfz-\bfz'\| \equiv \|\bfx-\bfx'\| \, (\mod p)$. So
\[
|\Delta_{n,r}(E_r)| \leq p^{r-1} \cdot |\Delta_{n,1}(E_1)|<2l p^{r-1}.
\]

To sum up, if we take $l = \lceil p \cdot w(p) \rceil$ with $w(p)=\textit{o}(1)$ $(p\rightarrow \infty)$ being a positive-valued function tending to $0$ arbitrarily slowly, then there are sets $E_r$ with density $p^{-\frac{n-1}{2}}w(p)$ such that $|\Delta_{n,r}(E_r)| < 2p^r w(p) = \textit{o}(p^r)$. So Corollary  \ref{cor_diag_hom} is optimal in general.
\end{example}

\begin{example} \label{ex2}
Given a positive integer $C$, we first claim that there exists a set $E_1\subset (\mathbb{Z}/p\mathbb{Z})^n$ with {\color{black}$\delta_{E_1} \geq C^{-1}p^{-n/2}$} such that $\Delta_{n, 1}(E_1)=\{0\}$ or $|\Delta_{n, 1}(E_1)|\le (p+1)/C$.

If $-1$ is a square or $n\equiv 0\,(\mod 4)$, as above, we can find such a set $E_1\subset (\mathbb{Z}/p\mathbb{Z})^n$ of size $p^{n/2}$ such that $\Delta_{n, 1}(E_1)=\{0\}$.

If $-1$ is a non-square and $n\equiv 2\,(\mod 4)$, then let $\theta$ be a generator of the group $G_1=SO_2(\bF_p)$, which is cyclic of order $p+1$, and let $v_0\in (\mathbb{Z}/p\mathbb{Z})^2$ such that $||v_0||=1$. By the Dirichlet's theorem on arithmetic progressions, one can choose $p$ large enough such that $C$ divides $p+1$. Let $G$ be the subgroup spanned by $\theta^C$. Then we have $|G|=(p+1)/C$. Set $X=\textsf{orb}_1(\bfv_0)$. A direct computation shows that $X$ satisfies the desired property, namely, $|\Delta_{2, 1}(X)|\le |X|=(p+1)/C$. Let $X'\subset (\mathbb{Z}/p^r\mathbb{Z})^{n-2}$ of size $p^{(n-2)/2}$ such that $\Delta_{n, 1}(X')=\{0\}$. Set $E_1=X\times X'$. Then we have $|E_1|=p^{\frac{n-2}{2}}\cdot \frac{p+1}{C}$. By the definition of $E_1$, we obtain $|\Delta_{n, 1}(E_1)|\le (p+1)/C$.

For $r\ge 2$, we set
\[E_r=\{\mathbf{x}+((\mathbb{Z}/p^r\mathbb{Z})\setminus U_r)^n\colon \mathbf{x}\in E_1\}.\]
So {\color{black}
\[
\delta_{E_r}=p^{-rn}|E|\geq p^{-rn}p^{\frac{n-2}{2}}\cdot \frac{p+1}{C}\cdot p^{(r-1)n}\geq C^{-1}p^{-n/2},
\]
and
\[
|\Delta_{n, r}(E_r)|\le p^{r-1}|\Delta_{n, 1}(E_1)|\ll p^r/C.
\]
}

\end{example}

\section{Acknowledgements}
Thang Pham would like to thank the Alfr\'{e}d R\'{e}nyi Institute of Mathematics for the hospitality and excellent working
conditions. Thang Pham was partially supported by ERC grant ``GeoScape'', no. 882971, under Prof. J\'{a}nos Pach. We thank Le Quang Hung for checking all proofs in Section 4 for the case $p\equiv 1\, (\mod 4)$.

\end{document}